%% file: main_arxiv.tex
\documentclass[11pt, letterpaper]{article}

\usepackage[
letterpaper,
top=1in,
bottom=1in,
left=1in,
right=1in]{geometry}

\usepackage[utf8]{inputenc}

\input{preamble.tex}

\usepackage{amsmath, amsthm, amssymb}
\usepackage[pagebackref,colorlinks=true,urlcolor=blue,linkcolor=blue,citecolor=OliveGreen]{hyperref}
\usepackage[nameinlink]{cleveref}

\let\subparagraph\paragraph

\usepackage{thmtools}
\usepackage{thm-restate}
\newtheorem{theorem}{Theorem}[section]

\newtheorem{lemma}[theorem]{Lemma}

\newtheorem{proposition}[theorem]{Proposition}

\theoremstyle{definition}
\newtheorem{definition}[theorem]{Definition}
\newtheorem{problem}[theorem]{Problem}
\newtheorem{example}[theorem]{Example}
\newtheorem{question}[theorem]{Question}
\newtheorem{remark}[theorem]{Remark}

\newtheorem{construction}[theorem]{Construction}

\hypersetup{
    pdfauthor = {Pepijn Roos Hoefgeest, Lucas Slot},
}

\title{Robustness of Persistent Topological Features \\ and Minimum Homological Cuts}

\author{
  \begin{minipage}[t]{0.45\textwidth}
    \centering
    Pepijn Roos Hoefgeest\thanks{This work was partially supported by the Wallenberg AI, Autonomous Systems and
Software Program (WASP) funded by the Knut and Alice Wallenberg Foundation.} \\ \small \href{mailto:pepijnrh@kth.se}{pepijnrh@kth.se} \\ \small KTH Stockholm
  \end{minipage}%
  \hfill
  \begin{minipage}[t]{0.45\textwidth}
    \centering
    Lucas Slot\thanks{This work was completed while the second author was at ETH Zurich, and supported by the Swiss National Science Foundation (SNSF), grant no. 10004947.} \\  \small \href{mailto:l.f.h.slot@uva.nl}{l.f.h.slot@uva.nl} \\ \small University of Amsterdam
  \end{minipage}%
}

\begin{document}

\maketitle
\thispagestyle{empty}
\begin{abstract}
\input{abstract}
\end{abstract}
\newpage

\thispagestyle{empty}
\tableofcontents
\newpage

\setcounter{page}{1}

\section{Introduction}
\input{Introduction}

\section{Preliminaries}
\input{Preliminaries.tex}

\section{Induced matchings, adversarial robustness and homological cuts}
\input{inducedmatchings.tex}

\section{Complexity of the minimum homological cut problem}
\input{zero_dimensional.tex}
\input{np-hardness.tex}
\input{alexander.tex}

\section{Adversarial robustness for the Rips filtration}
\input{HausdorffH.tex}

\section{Linear programming relaxations and homological max-flow min-cut}
\input{LP.tex}

\bibliographystyle{plain}
\bibliography{RTFbib}

\appendix
\crefalias{section}{appendix}
\crefalias{subsection}{appendix}

\section{Omitted proofs}
\input{Appendix.tex}

\end{document}

%% file: preamble.tex
\usepackage{amsthm, amsmath, amssymb}
\usepackage{mathtools}

\usepackage[dvipsnames]{xcolor}

\usepackage{tikz}
\usepackage{tikz-cd}
\usetikzlibrary{shapes.geometric}
\usetikzlibrary{arrows}
\usetikzlibrary{arrows.meta}
\usetikzlibrary{backgrounds}
\usetikzlibrary{decorations.markings}
\tikzset{    
    mypoint/.style={
        circle,
        draw,
        inner sep=.3mm
        },  
    whitepoint/.style={
        fill=white, 
        mypoint
        },  
    blackpoint/.style={
        fill=black, 
        mypoint
        },  
    textnode/.style={
        text height=2.5ex, 
        text depth=1ex
        },  
    }
    \newcommand{\midarrow}{\tikz \draw[-triangle 90] (0,0) -- +(.1,0);}

\newcommand{\R}{\mathbb{R}}
\newcommand{\N}{\mathbb{N}}
\newcommand{\Z}{\mathbb{Z}}
\newcommand{\mr}{\mathbf{r}}

\newcommand{\bc}{\mathcal{B}}
\renewcommand{\epsilon}{\varepsilon}
\newcommand{\im}{\mathcal{X}}

\newcommand{\PH}{\mathrm{PH}}

\newcommand{\simp}{K}
\newcommand{\fsimp}{\mathcal{K}}

\newcommand{\dualg}[1]{\mathcal{G}_{#1}}
\newcommand{\vcomp}[1]{V_{\mathbb{R}^n\setminus #1}}

\newcommand{\Ima}{\mathrm{Im}}
\newcommand{\incl}{\iota}
\newcommand{\ihomo}{\iota^*}
\newcommand{\imatch}{\mathcal{X}}

\newcommand{\VR}{\mathcal{R}}
\newcommand{\OVR}{\mathcal{R}^{\preceq}}

\newcommand{\imord}{\mathcal{X}^{\preceq}}

\newcommand{\predeath}[1]{K_{#1}}
\newcommand{\deathsimplex}[1]{\tau_{#1}}

\newcommand{\br}{\mathbf{r}}

\newcommand{\hh}[1]{\mathcal{H}_{#1}}
\newcommand{\hhy}{\hh{X,k}}
\newcommand{\epsVR}{\VR^{\epsilon}}

\newcommand{\sminus}{-}

\newcommand{\mhc}{\mathrm{mhc}}

\DeclareMathOperator*{\argmin}{arg\,min}

%% file: abstract.tex
Persistent homology is a popular method for computing topological features of (metric) data. Standard approaches based on the \v{C}ech or Rips filtration are stable under small perturbations of the data, but highly sensitive to outliers. This lack of robustness has been frequently addressed in the literature. In this paper, we take a novel perspective by asking the following question: When can we guarantee that an observed persistent feature (a bar) is inherent to the underlying data in the presence of a limited number of unknown, arbitrary outliers.
We formalize this question by introducing the notion of \emph{adversarial robustness}, and study the problem of deciding whether a given bar in the barcode of a filtered simplicial complex is adversarially robust. We show that this problem is essentially equivalent to a homological variant of the minimum cut problem in simplicial complexes, which we believe to be of independent interest. As our main technical contribution, we provide the first computational complexity results for this problem, consisting of an efficient algorithm in $0$-dimensional homology, NP-hardness for the general problem, and an efficient algorithm for codimension-$1$ in $n$-dimensional complexes embedded in~$\mathbb{R}^n$.
We also analyze its natural linear programming relaxation, whose dual defines a homological analog of the max-flow problem in graphs. 
We show that a max-flow/min-cut theorem does not hold in our setting, implying that the LP relaxation is not tight in general.
Finally, in the special case of the Rips filtration, we provide a global heuristic based on the Hausdorff distance that guarantees adversarial robustness of sufficiently long bars. This connects adversarial robustness to standard stability theorems in persistent homology.

%% file: Introduction.tex
Persistent homology is a central method in topological data analysis. 
It is used to extract topological features from (metric) data across a range of spatial scales. 
At a high level, it works as follows. First, we represent our data by a nested sequence of simplicial complexes, called a filtration. 
An important example is the (Vietoris-)Rips filtration, which arises from pairwise distances between points in a metric space.
The evolution of the homology groups of the complexes in this filtration (i.e., its \emph{persistent} homology) can be captured by a \emph{barcode}, which consists of a (multi)set of intervals (or \emph{bars}) whose endpoints represent the appearance (\emph{birth}) and disappearance (\emph{death}) of a homology class at a particular step in the filtration.
Barcodes serve as a topological signature of the underlying data set. 
We refer to~\cite{persistencebook, persistencesurvey, persistencesurvey2, computingpersistence} for surveys on persistent homology and its many applications. 

\subparagraph*{Stability and robustness.}
An important property of barcodes is their \emph{stability} under small perturbations:
For example, the \textit{Bottleneck distance} between the barcodes of Rips filtrations may be bounded in terms of the \emph{(Gromov-)Hausdorff distance} between their underlying metric spaces~\cite{originalstability}.
On the other hand, barcodes are (in)famously sensitive to \emph{outliers}. Indeed, even a single outlier may cause arbitrarily large changes in the barcode, which makes persistent homology unreliable in the presence of noise.
This issue has been frequently addressed in the literature; we highlight three approaches. 
First, one may consider alternative metric interpretations of the data which take density into account, and whose resulting barcodes are (hopefully) more robust.
For instance, a filtration based on the \emph{distance-to-measure} function~\cite{geominference} achieves stability with respect to the \emph{Wasserstein distance}, which tolerates small amounts of outliers~\cite{Buchet2016}. 
See~\cite{CDforTDA, kerneldistance} for similar approaches using kernel functions. A downside is that these filtrations are difficult to interpret geometrically, often depend on some choice of secondary parameters, and are computationally expensive, making them less suited for topological inference than the Rips filtration.
A second approach is to (cleverly) subsample the data to determine so-called \emph{landmarks}, and then construct a filtration based on these landmarks. An example is the \emph{(lazy) witness filtration}~\cite{Witnesscomplex}. 
Empirically, it appears that the landmarks may be chosen in a way that reduces the sensitivity to outliers (in the original data); see~\cite{Stolz2023}. However, this effect is hard to quantify theoretically. 
Third, one may consider filtrations indexed by multiple parameters, modeling for example both scale and density. The persistent homology of such bifiltrations can be provably robust to certain types of noise~\cite{Blumberg2022}. 
However, multiparameter persistence modules are significantly more difficult to represent than their one-parameter counterparts. In particular, they generally do not have a barcode, which is a serious theoretical and practical drawback; see~\cite{botnan2023introductionmultiparameterpersistence}.

\subparagraph{A new notion of robustness.}
In this paper, we take a different perspective. Rather than modify existing filtrations to increase their tolerance to noise, we ask the following question.

\begin{question} \label{Q:main}
When can we guarantee that an observed persistent feature (a bar) arising from a filtration is inherent to the underlying data in the presence of (adversarial) noise? 
\end{question}
To address this question, we introduce a new notion of outlier-robustness of persistent topological features (bars) arising from a filtration of a simplicial complex, which we call \emph{adversarial robustness}. Before giving a formal definition, it is illustrative to first consider the Rips filtration on a (finite) set of metric data.
Intuitively speaking, we say that a persistent feature of the Rips filtration is $k$-adversarially robust if it ``continues to exist'' after removing any $k$ points from the data set. In this way, $k$-adversarial robustness certifies that a feature is \emph{inherent} to an underlying data set even after adding (at most~$k$) unknown, arbitrary outliers. 
Alternatively, the largest $k$ for which a feature is $k$-adversarially robust can be thought of as a \emph{measure of robustness} of that feature (possibly after dividing by the total number of data points). With respect to the earlier work outlined above, a key advantage of our approach is that we make no structural assumptions on the outliers.

In what follows, we give a formal definition of adversarial robustness. The key ingredient is the induced matching between barcodes~\cite{Bauer2013InducedMA}, which allows us to relate the persistent features of a filtered simplicial complex to those of its filtered subcomplexes. 
Then, we study the computational problem of deciding whether a given bar is adversarially robust. To this end,  we first show that adversarial robustness of a bar can be determined by solving a homological variant of the minimum cut problem in a single (unfiltered) simplicial complex. Thus, it suffices to study the complexity of that problem, which we believe to be of independent interest. We give efficient algorithms for zero-dimensional homology and complexes embedded in $\R^n$. On the other hand, we show that the general problem is NP-hard. We also analyze its natural linear programming
relaxation, whose dual defines a homological analog of the max-flow problem in graphs. We conclude by giving an efficiently computable heuristic for adversarial robustness in Rips filtrations, which is connected to Hausdorff stability.

\subsection{Adversarially robust persistent features}
Let $\simp$ be a (finite) simplicial complex, and let $\fsimp = (K_i)_{0 \leq i \leq m}$ be a filtration of~$K$, i.e., a sequence of simplicial complexes $\emptyset = K_0 \subseteq K_1 \subseteq \ldots \subseteq K_m = K$. 
Throughout, we assume that $\fsimp$ is a \emph{simplex-wise} filtration, i.e., for any $i < m$, the complex $K_{i+1}$ is obtained by adding (at most) a single simplex to $K_i$. Any filtration can be made simplex-wise by breaking ties (arbitrarily, but so that face relations are preserved) whenever multiple simplices are added at once, which is also what is done in practice.
For $p \geq 0$, we denote the $p$-dimensional persistent homology of $\fsimp$ by $\PH_p(\fsimp)$, and we write $\bc(\PH_p(\fsimp))$ for its barcode.
For $A \subseteq \simp$, we write $\simp - A$ for the largest subcomplex of $\simp$ contained in $\simp \setminus A$. That is, $\simp - A$ is  obtained from~$\simp$ by removing all simplices that have a face in $A$. Similarly, we write $\fsimp - A$ for the filtration of $\simp - A$ given by $(\simp_i - A)_{0 \leq i \leq m}$.  
There is a natural way to relate the barcodes associated with $\fsimp$ and~${\fsimp - A}$. Namely, the inclusion $\simp-A \hookrightarrow \simp$ induces a map $\PH_p(\fsimp -A) \to \PH_p(\fsimp )$. In turn, this map induces a (partial) matching between the respective barcodes, which we denote $\im_{\fsimp-A \hookrightarrow \fsimp}$. 
This is a special case of the so-called \emph{induced matching}, which plays a crucial role in a proof of the algebraic stability theorem~\cite{Bauer2013InducedMA}. 
Generally, the induced matching is \emph{not} functorial, but in the context of simplex-wise filtrations it allows us to unambiguously relate bars in the barcodes of $\PH_p(\fsimp)$ and $\PH_p(\fsimp - A)$; see~\Cref{SEC:inducedmatching} for more details. 
This allows us to give the central definition of this paper. For $s \in \N$, let $\simp^{(s)} \subseteq \simp$ denote the set of $s$-simplices in $\simp$.
\begin{definition}[Adversarial robustness] \label{DEF:Robustness}
    Let $\fsimp$ be a simplex-wise filtration of a simplicial complex $K$. Let $p \geq 0$, and let $s \leq p$. A bar $B \in \bc(\PH_p(\fsimp))$ is \emph{$k$-adversarially robust (in degree~$s$)} if, for each subset $A \subseteq K^{(s)}$ of size at most $k$, 
    we have 
    $
        B \in  \mathrm{Im}(\im_{\fsimp -A \hookrightarrow \fsimp}).
    $
\end{definition}
\Cref{DEF:Robustness} guarantees that a bar is ``present'' in any (filtered) subcomplex of $\simp$ missing at most $k$ $s$-simplices. If $\fsimp$ is a Rips filtration, the $0$-simplices (vertices) of $K$ correspond to metric data points. Thus, $k$-adversarial robustness (in degree $0$) of a bar means that it is present in the Rips filtration of any subset of the original data obtained by removing at most~$k$ points, matching our earlier intuitive description.

\subsection{(Minimum) homological cuts}
Adversarial robustness of bars is closely related to a novel homological variant of the min-cut problem in simplicial complexes, which we introduce in this work.  For $C \subseteq K^{(s)}$, we write $\ihomo_{K \sminus C \hookrightarrow K} : H_p(K \sminus C) \rightarrow H_p(K)$ for the map induced by the inclusion $K \sminus C \hookrightarrow K$.
\begin{definition}[homological cuts] \label{DEF:homcut}
    Let $K$ be a simplicial complex. Let $p \geq 0$, and $s \leq p$. We say that $C \subseteq \simp^{(s)}$ is a \emph{homological $s$-cut} for~$\gamma \in H_p(K)$ if
    $
        \gamma \not \in \Ima(\ihomo_{K \sminus C \hookrightarrow K}).
    $ We refer to the special cases $s=0, s=1$ as homological \emph{vertex} and \emph{edge} cuts, respectively; see~\Cref{FIG:CUTEXAMPLE}.
\end{definition} 

\begin{figure}[h]
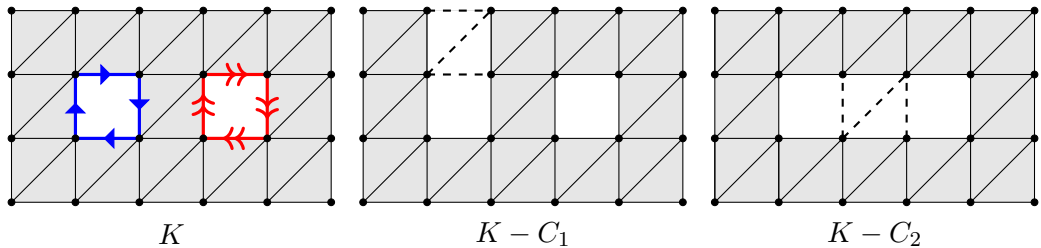

    \centering
    \include{tikz/cutexample}
    \vspace{-1cm}
    \caption{A simplicial complex $K$ with cycles $c_{\rm left}, c_{\rm right} \in C_1(K; \R)$ drawn in blue single arrows and red double arrows, respectively (all coefficients equal to~$1$). The classes $[c_{\rm left}]$ and $[c_{\rm right}]$ generate $H_1(K; \R) \cong \R^2$. 
    On the right: two subcomplexes obtained by removing subsets $C_1, C_2 \subseteq K^{(1)}$ (dashed) from $K$, respectively. Note that $C_1$ is an edge cut for $[c_{\rm left}]$ and $[c_{\rm left} + c_{\rm right}]$, but not for $[c_{\rm right}]$. On the other hand, $C_2$ is a $1$-cut for both $[c_{\rm left}]$ and $[c_{\rm right}]$, but not for $[c_{\rm left} + c_{\rm right}]$.
    }
    \label{FIG:CUTEXAMPLE}
\end{figure}

To connect homological cuts to adversarial robustness, we use the fact that, for a simplex-wise filtration $\fsimp$, each bar $B \in \bc(\PH_p(\fsimp))$ corresponds to a pair $(\sigma_B, \tau_B)$ of simplices, whose insertions at steps $i=b$ and $i = d$, respectively, represent the birth and death of any cycle representing $B$. 
These are called \emph{persistence pairs}. We call the complex $\predeath{B} := K_{d-1}$ the \emph{predeath complex} of $B$. Now, the following proposition shows that adversarial robustness of~$B$ is characterized by homological cuts of $[\tau_B]$ in $K_B$. We give its proof in~\Cref{SEC:proofequiv}.
\begin{proposition} \label{THM:main:LOC}
    Let $p \geq 0$, and let $B \in \bc(\PH_p(\fsimp))$ be a bar in the barcode of a simplex-wise filtration $\fsimp$ of a simplicial complex $K$. Let $\predeath{B}$, $\deathsimplex{B}$ be its pre-death complex and death simplex. Then, $B$ is $k$-adversarially robust (in degree $s$) if, and only if,
    \[
        [\partial \deathsimplex{B}] \in \mathrm{Im}(\ihomo_{\predeath{B} \sminus A \hookrightarrow \predeath{B}}) \text{ for all $A \subseteq \simp^{(s)}$ with $|A| \leq k $}.
    \]
    Thus, $B$ is $k$-adversarially robust iff all homological $s$-cuts of $[\deathsimplex{B}]$ in $\predeath{B}$ have size at least~$k$.
\end{proposition}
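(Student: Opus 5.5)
The plan is to fix one subset $A \subseteq K^{(s)}$ and show that
\[
B \in \mathrm{Im}(\im_{\fsimp - A \hookrightarrow \fsimp}) \iff [\partial \deathsimplex{B}] \in \mathrm{Im}(\ihomo_{\predeath{B} \sminus A \hookrightarrow \predeath{B}}).
\]
Quantifying over all $A$ with $|A| \le k$ then gives the proposition. The final sentence is a restatement via \Cref{DEF:homcut}: "$[\partial\deathsimplex{B}]$ has no homological $s$-cut of size at most $k$" is the same as the displayed condition. (The statement's "$[\deathsimplex{B}]$" should read $[\partial\deathsimplex{B}]$, and the threshold should be "at least $k+1$"; I would phrase it as "no cut of size $\le k$".)

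Write $B = [b,d)$ with persistence pair $(\sigma_B,\tau_B)$. I would use the structure of induced matchings for the inclusion $\fsimp - A \hookrightarrow \fsimp$, with filtration-wise injective image on homology. Following \cite{Bauer2013InducedMA}, the map factors through the image module $\Ima$, a submodule of $\PH_p(\fsimp)$. For a submodule of a pointwise finite-dimensional module, the matching $\bc(\Ima)\to\bc(\PH_p(\fsimp))$ matches bars with equal death and birth no earlier. The matching $\bc(\PH_p(\fsimp - A))\to \bc(\Ima)$ is a quotient matching, with equal births.

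For a simplex-wise filtration, these matchings are unambiguous: deaths (and births) of $\PH_p(\fsimp)$ are distinct indices. Bars of $\Ima$ dying at index $d$ exist precisely when the image of the death-$d$ class is nonzero just before $d$. The rank of $\Ima_{d-1} \to \Ima_{d}$ drops at $d$ iff the kernel of $H_p(K_{d-1})\to H_p(K_d)$, which is spanned by $[\partial\tau_B]$, meets $\Ima_{d-1} = \Ima(H_p(K_{d-1}-A)\to H_p(K_{d-1}))$. Since that kernel is one-dimensional (field coefficients), this happens iff $[\partial\tau_B]\in \Ima_{d-1}$. So $B$ is matched to a bar of $\Ima$ iff $[\partial\tau_B]\in\Ima(\ihomo_{\predeath{B}\sminus A\hookrightarrow\predeath{B}})$. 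Note that $K_{d-1}-A = \predeath{B}\sminus A$ since $\fsimp - A$ is defined levelwise.

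Next I need the composite matching to reach $B$ whenever $B$ is matched from $\Ima$. Every bar of $\Ima$ is in the image of the quotient matching from $\bc(\PH_p(\fsimp - A))$, because an epimorphism's induced matching covers the target barcode. Composition is then the induced matching by definition in \cite{Bauer2013InducedMA}. Conversely, if $B$ is matched from $\fsimp - A$, then it is matched from $\Ima$, giving the reverse direction.

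The main obstacle is the middle step: carefully justifying that the submodule matching in the simplex-wise setting matches $B$ exactly when the death of $B$ is "seen" by $\Ima$, i.e. that the counting argument of \cite{Bauer2013InducedMA} (bars of $\Ima$ dying at $d$ correspond to rank drops) gives an actual match to $B$ and not to another bar. I would handle this by noting that with simplex-wise filtrations the bar of $\PH_p(\fsimp)$ with death $d$ is unique, and that the matching rule pairs bars dying at the same index in order of birth. The birth condition (image bar born no earlier than $B$) is automatic, since $\Ima_i \subseteq \PH_p(\fsimp)_i$.
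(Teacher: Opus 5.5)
Your proposal is correct and is essentially the paper's argument. Both show that $B$ is hit by the induced matching iff the image module $\Ima(\PH_p(\fsimp \sminus A)\to\PH_p(\fsimp))$ has a bar dying at $d$, i.e.\ iff the one-dimensional kernel spanned by $[\partial\deathsimplex{B}]$ meets $\Ima(\ihomo_{\predeath{B}\sminus A\hookrightarrow\predeath{B}})$. Your version is slightly more explicit than the paper's: you compute the kernel of the image module's structure map directly, and you use that the epimorphism half of the factorization matches every bar of the image, which the paper leaves implicit. Your corrections to the statement's last sentence are also right: it should read $[\partial\deathsimplex{B}]$ rather than $[\deathsimplex{B}]$, and ``no cut of size $\le k$'' rather than ``size at least $k$''.
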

The above proposition motivates our study of the following problem.
\begin{problem}
The \emph{minimum homological $s$-cut problem} asks to compute
    \begin{equation}
        \mhc(K, \gamma, s) := \min_{C \subseteq K^{(s)}} \left\{ |C| : \text{$C$ is a homological $s$-cut for $\gamma$} \right\}.     
        \tag{MHC} \label{EQ:MHC}
    \end{equation}
    We call a set $C$ attaining the minimum above a \emph{minimum homological $s$-cut} for $\gamma$ (in $K$).
\end{problem}
Apart from its connection to adversarial robustness, we believe that the minimum homological cut problem is of intrinsic interest. It is related to, \emph{but distinct from}, two types of well-studied problems in computational topology. On the one hand, there are problems related to finding a smallest (or otherwise `optimal') representing cycle of a homology class in a simplicial complex~\cite{Blaser:paramboundedchains, BWA:MinBoundedChains, Chambers:MinArea}; this is often referred to as \emph{homology localization}. By contrast, the problem we consider can be thought of as \emph{co}homology localization.
The computational complexity of homology localization varies based on the choice of coefficient field, the dimension of the homology group, and additional assumptions on the underlying complex.
For example, while the general problem is hard~\cite{ChenFreedman2011}, a linear programming relaxation yields an efficient algorithm for finding minimum weight homologous cycles over integer coefficients for complexes whose boundary matrix is totally unimodular~\cite{optimalchainLP}.
On the other hand, there are ordinary min-cut (and max-flow) problems in graphs with topological structure,
e.g., graphs that can be embedded in a surface of low genus. 
There, (co)homological properties of cuts (or flows) can be used to achieve algorithmic speedups with respect to the general case~\cite{Erickson:surfacegraphs, Chambers2012, Erickson:homcovers}.

\subparagraph*{Linear programming relaxations.}
\label{SEC:INTRO:LP} As we explain in~\Cref{SEC:LP}, homological edge cuts of a class $\gamma \in H_1(K; \R)$ are naturally related to (ordinary) cuts in graphs. Namely, we show that a minimum homological edge cut can be found by optimizing the number~$\|\varphi\|_0$ of non-zero coefficients of a vector $\varphi$ (indexed by the edges of $K$) under a set of linear constraints involving the boundary matrix of $K$ (\Cref{PROP:ell0}). 
The relaxation of this problem obtained by optimizing the $1$-norm $\|\varphi\|_1$ instead is a linear program. 
Its dual may be interpreted as a homological analog of the max-flow problem in graphs. Contrary to the graph setting, we show that there is no max-flow min-cut theorem in our case: homological max-flows are not necessarily integral, meaning the LP relaxation is not tight (\Cref{EXMP:mincutmaxflow}). 

\subsection{Main contributions}

\subparagraph{Complexity of the minimum homological cut problem.} 
We prove positive and negative results on the computational complexity of the minimum homological cut problem~\eqref{EQ:MHC} introduced above, in terms of the number of simplices $|\simp|$ in the complex $K$. By~\Cref{THM:main:LOC}, these results have immediate implications on the complexity of determining adversarial robustness of bars.
Our first contribution is an efficient algorithm for the case where $\gamma \in H_0(K)$ is a $0$-dimensional homology class, i.e., $p=0$.
\begin{restatable}{theorem} {Hzeroeasy}\label{THM:intro:H0easy}
Let $K$ be a simplicial complex , and let $\gamma \in H_0(K)$. We can compute a minimum homological vertex cut for $\gamma$ in time $O(|K^{(0)}| + |K^{(1)}|)$.
\end{restatable}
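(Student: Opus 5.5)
We first reduce to the graph $G = (K^{(0)}, K^{(1)})$, the $1$-skeleton of $K$. Since $H_0(K)$ depends only on $G$, and removing a vertex set $C \subseteq K^{(0)}$ from $K$ corresponds to deleting those vertices from $G$, we have $H_0(K \sminus C) = H_0(G \sminus C)$ compatibly with the inclusion maps. Write $\gamma = \sum_{j} \lambda_j [Q_j]$, where $Q_1, \dots, Q_r$ are the connected components of $G$ and $[Q_j]$ denotes the class of any vertex in $Q_j$. This decomposition takes time $O(|K^{(0)}| + |K^{(1)}|)$: compute components by BFS, take a cycle representative of $\gamma$, and sum its coefficients per component. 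Let $J = \{ j : \lambda_j \neq 0 \}$. If $J = \emptyset$, then $\gamma = 0$ lies in every image, so no cut exists and the minimum is $+\infty$.

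The key step is to characterize $\Ima(\ihomo_{K\sminus C \hookrightarrow K})$. Every component of $G \sminus C$ lies inside a single component $Q_j$, and its class maps to $[Q_j]$. Hence the image is spanned by the $[Q_j]$ for those $j$ with $Q_j \not\subseteq C$. Since the $[Q_j]$ form a basis of $H_0(K)$, $\gamma$ lies in the image iff every $j \in J$ satisfies $Q_j \not\subseteq C$. Thus $C$ is a homological vertex cut for $\gamma$ iff $C \supseteq Q_j$ for some $j\in J$. (Over $\Z$ coefficients, the same holds because the image is a direct summand spanned by basis elements.)

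It follows that $\mhc(K,\gamma,0) = \min_{j \in J} |Q_j|$, attained by $C = Q_{j^*}$ for a minimizing $j^*$. Computing $|Q_j|$ for all components and choosing the minimum over $J$ is linear, giving total time $O(|K^{(0)}| + |K^{(1)}|)$. The only point needing care is the image characterization above, specifically that deleting some but not all vertices of a component never removes $[Q_j]$ from the image. Everything else is bookkeeping. Note the contrast with ordinary vertex cuts: a single surviving vertex already carries the whole class $[Q_j]$.
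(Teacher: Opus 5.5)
Your proposal is correct and follows essentially the same route as the paper. The paper first proves (Proposition~\ref{PROP:H0component}) that $C$ is a homological vertex cut for $\gamma = \sum_i \lambda_i [c_i]$ if and only if $C$ contains the full vertex set of some component with $\lambda_i \neq 0$, using your observation that the surviving vertices generate $H_0(K - C)$ and each maps to the class of its component; it then finds the components by a linear-time graph search, and your explicit handling of $\gamma = 0$ and of computing the coefficients $\lambda_j$ in linear time are details the paper leaves implicit.
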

\Cref{THM:intro:H0easy} follows from the fact that a minimum homological cut of a class in $H_0$ is always equal to the vertex set of a connected component of $K$, as we show in \Cref{SEC:H0easy}.

Next, we show that finding a minimum homological cut is NP-hard, already when $p = 1$. 
\begin{theorem} \label{THM:NPhardness}
    For $p = 1$ and $s=1$, the minimum homological $s$-cut problem is NP-hard. This is true in particular when homology is taken with coefficients in  $\R$ or $\Z/2\Z$.
\end{theorem}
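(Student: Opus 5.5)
The plan is to reduce from a known NP-hard coding-theoretic problem: finding a minimum-weight solution of a sparse linear system. The reduction goes through a cohomological reformulation of homological edge cuts.

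\emph{Step 1 (reformulation).} Let $\gamma=[z]\in H_1(K)$. Removing an edge set $C$ also removes all triangles containing an edge of $C$. Even so, the image of $\ihomo_{K\sminus C\hookrightarrow K}$ consists exactly of the classes $[z']$ with $z'\in Z_1(K)$ supported on $K^{(1)}\setminus C$. This is because boundaries are taken in $K$ after mapping, so the lost triangles are irrelevant. Hence $C$ is a homological edge cut iff the affine space $z+B_1(K)$ contains no chain supported off $C$. Over a field, I would apply linear duality (Farkas/Fredholm) to this affine space. The result should be that $C$ is a cut iff there is a cocycle $\varphi\in Z^1(K)$ with $\mathrm{supp}(\varphi)\subseteq C$ and $\varphi(z)\neq 0$. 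Therefore
\[
\mhc(K,\gamma,1)=\min\{\|\varphi\|_0 : \varphi\in Z^1(K),\ \varphi(z)=1\}.
\]
This is essentially \Cref{PROP:ell0}; I would prove it directly here if needed.

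\emph{Step 2 (source problem).} The target is: given a sparse system $Mx=0$ over $\Z/2\Z$ (at most three variables per equation, say) together with a linear form $c$, minimize $|x|$ subject to $Mx=0$ and $c\cdot x=1$. This is a minimum-distance / coset-weight type problem, NP-hard by Berlekamp--McEliece--van Tilborg and Vardy. I expect it remains hard for 3-sparse equations via standard sparsification, which introduces auxiliary variables and duplicates each original variable many times so that auxiliary weight is negligible.

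\emph{Step 3 (realizing the system as a complex).} For each variable I create an edge, or a bundle of parallel edge-paths to weight it heavily. For each equation $x_a+x_b+x_e=0$ I glue a triangle whose three edges are identified with the corresponding variable edges. The cocycle condition $\delta\varphi=0$ then says exactly that the values on the three edges of every triangle sum to zero. The main difficulty lies here. Cocycles on a simplicial complex must also be compatible with vertex incidences, and coboundaries $\delta f$ of $0$-cochains add unwanted solutions. I plan two fixes. First, replace identifications by subdivided gadgets: each variable becomes a ``fan'' of triangles whose internal edges are forced equal to the variable's value, and whose length controls the weight. Second, choose $z$ to be a sum of loops so that $\varphi(z)=c\cdot x$, while coboundaries evaluate to $0$ on $z$ and are hence harmless. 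I then check that a minimal cocycle with $\varphi(z)=1$ corresponds, up to a controllable additive/multiplicative overhead, to a minimum-weight solution $x$.

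\emph{Step 4 (real coefficients).} Over $\R$ I would use the same complex with orientations chosen so that each triangle's equation reads $\varphi(e_1)+\varphi(e_2)-\varphi(e_3)=0$ or similar. I would then reduce from a problem hard over $\R$ as well, for instance exact cover or 3-dimensional matching encoded as a minimum support solution of a $0/\pm1$ system, which is known to be NP-hard to minimize in $\ell_0$. Since only the support matters, the $\Z/2\Z$ gadget analysis should carry over once the sign pattern is fixed.

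I expect Step 3 to be the main obstacle: building a genuine simplicial complex whose cocycle space is exactly the prescribed code, without spurious low-weight cocycles.
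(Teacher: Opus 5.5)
Your Step 1 is correct: the argument with the subspace $C_1(K\sminus C)+B_1(K)$ proves the sparse-cocycle characterization over any field, which is the field-independent form of \Cref{PROP:ell0}. Reducing from a minimum-weight linear-equations problem is also a sensible idea. There is a genuine gap, however. The reduction itself lives in Step~3, which you leave as a plan, and the mechanisms you sketch there do not work.

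Individual edge values of a cocycle carry no invariant meaning. A fan of triangles is contractible, so every cocycle on it is a coboundary $\delta f$ and its internal edge values are arbitrary. Nothing is ``forced equal to the variable's value'', and a fan cannot impose a weight. Likewise, $\delta f(z)=0$ only shows that coboundaries do not affect feasibility. They do affect the objective, since $\|\varphi+\delta f\|_0$ can be far smaller than $\|\varphi\|_0$. With variables as edges and equations as triangles, the cocycle space always contains the cut space of the $1$-skeleton, so you are solving a coset-weight problem for a different code. You give no argument that its optimum tracks the weight of the source instance. Any lower bound on cut size must be invariant under adding coboundaries, i.e.\ topological.

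The missing idea is to encode variables as $1$-cycles rather than edges, and equations as surfaces whose boundary relation in $H_1$ is the equation. This is exactly the paper's proof. From an X3C instance it glues punctured disks $F_u$, with outer circle $u$ (all identified) and inner circles $S\ni u$ (shared between the $F_u$), so that $[u]=-\sum_{S\ni u}[S]$ in $H_1(L_u)$. By functoriality, a cut for $\gamma$ restricts to a cut for $[u]$ in each $L_u$. It must therefore meet the circle $u$ and some circle $S\ni u$, and the circles met form a cover.

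The triangulation (collars and nested copies of $M$) makes every minimum cut for $[u]$ in $L_u$ consist of $c$ interior edges plus one edge on $u$ and one on an inner circle, with the boundary edges freely choosable. An exact cover thus yields a consistent cut of size $c|U|+|U|/3+1$, certified by an explicit $\pm1$ cocycle.

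Two further points follow from this construction. A disk with many holes realizes a long equation, so the sparsification in your Step~2, which you only ``expect'' to work, is unnecessary. And because the argument concerns which circles a cut must meet, it is identical over $\R$ and $\Z/2\Z$. Your Step~4, by contrast, asserts without proof that a $\Z/2\Z$ gadget analysis carries over to $\R$, where cocycle values are arbitrary reals and the $\ell_0$ problem is different. As it stands, neither field is actually handled.
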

We give the proof of \Cref{THM:NPhardness} in \Cref{SEC:NPhardness}. It relies on a reduction from \textsc{Exact Cover by 3-Sets (X3C)}.
In~\cite[pp. 246]{GareyJohnson}, \textsc{X3C} is used to show hardness of the problem of finding minimum weight solutions to linear equations. It was also used in~\cite{hardnessofL0} to show hardness of finding the \emph{sparsest} approximate solution to a set of linear equations.
As mentioned, we show in~\Cref{SEC:LP} that~\eqref{EQ:MHC} can be solved by finding the sparsest solution to a particular set of linear equations. In light of these observations, \textsc{X3C} is a natural candidate for showing hardness of~\eqref{EQ:MHC}. 
In our proof, we construct for any X3C-instance an equivalent instance of~\eqref{EQ:MHC}. Our construction relies on ``gluing together'' punctured discs, each representing a $3$-set of the X3C-instance, to obtain a topological space which is then triangulated carefully to ensure minimum homological cuts correspond to exact covers. This approach resembles earlier work on NP-hardness in homology localization~\cite{Agoletal2005, ChenFreedman2011, DunfieldHirani2011}. A distinction is that these mostly rely on reductions from SAT-problems; to the best of our knowledge, a reduction from X3C was not considered before in this context.
We remark that our construction can be extended to cover the case $s=0$; see \Cref{REM:Hardness_vertex_cuts}. 
Via~\Cref{THM:main:LOC}, our result also implies that testing $k$-adversarial robustness of bars (in degree $0$ and $1$) is hard in general.

Finally, we give an efficient algorithm for the following special case, which covers, e.g., subcomplexes of the Delauney triangulation of a finite metric data set in $\R^n$.
\begin{restatable}{theorem}{embeddedeasy} \label{THM:embeddedeasy}
    Suppose that $K$ is an $n$-dimensional simplicial complex embedded in $\R^n$, and let $\gamma \in H_{n-1}(K)$. We can compute a minimum homological $(n-1)$-cut for $\gamma$ in polynomial time in the number of simplices of $K$.
\end{restatable}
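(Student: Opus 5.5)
We plan to use Alexander duality to turn the problem into a minimum cut problem in a dual graph. Let $|K|\subseteq\R^n$ be the underlying space. Consider the connected components of $\R^n\setminus|K|$, and add one vertex for each $n$-simplex of $K$. Together these form the vertex set of a dual graph $\dualg{K}$. Its edges correspond to the $(n-1)$-simplices of $K$. Each $(n-1)$-simplex $\sigma$ is a face of at most two regions of $\R^n\setminus|K^{(\le n-1)}|$, namely the two sides of $\sigma$. These are $n$-simplices or complement components. We join the two vertices for these sides by an edge $e_\sigma$, which is a loop if both sides are the same. By Alexander duality (over a field, or over $\Z$ using $\tilde H_{n-1}(K)\cong \tilde H^0(S^n\setminus |K|)$), we have $H_{n-1}(K)\cong \tilde H_0(\R^n\setminus|K|)$ up to dualisation. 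Concretely, $H_{n-1}(K)$ has a basis given by the boundary cycles $\partial U$ of the bounded complement components $U$: the $(n-1)$-chain given by the simplices on the frontier of $U$, oriented outward. We will verify this directly for simplicial complexes, by expressing a class $\gamma$ as $\sum_U \lambda_U[\partial U]$ with coefficients $\lambda_U$ computed in polynomial time by linear algebra. Here $U$ ranges over the bounded components, and the unbounded component $U_\infty$ has $\lambda_{U_\infty}=0$.

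Next we characterise cuts. Removing a set $C$ of $(n-1)$-simplices gives $K-C$, which also loses the $n$-simplices having a face in $C$. The complement components of $K-C$ are the connected components of $\dualg{K}\setminus\{e_\sigma:\sigma\in C\}$, after merging the vertices for removed $n$-simplices into their neighbours. The inclusion map $H_{n-1}(K-C)\to H_{n-1}(K)$ is dual to the map on $\tilde H_0$ of complements induced by $\R^n\setminus|K|\hookrightarrow\R^n\setminus|K-C|$. Under this identification, $\Ima(\ihomo)$ is spanned by the classes $\sum_{U\subseteq W}[\partial U]$, where $W$ ranges over the new components. So $\gamma\in\Ima$ if and only if $\lambda$ is constant on the set of old components merged into each new component $W$, with value $0$ on the component containing $U_\infty$. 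Equivalently, $C$ is a homological cut for $\gamma$ if and only if, in $\dualg{K}-C$, some path joins two complement-component vertices $U,U'$ with $\lambda_U\neq\lambda_{U'}$. The value $\lambda_{U_\infty}=0$ counts here.

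The algorithm is then as follows. For each pair $U\neq U'$ of complement components with $\lambda_U\neq\lambda_{U'}$, we must make them connected by cutting as few edges as possible. This looks like a cut problem, but in fact it is the opposite: we want the minimum number of deleted edges that *connects* $U$ to $U'$. Deleting $e_\sigma$ merges its two endpoints. So the cost is the length of a shortest path from $U$ to $U'$ in $\dualg{K}$, where every edge has weight $1$, $n$-simplex vertices are intermediate, and paths through other complement components are allowed. Thus $\mhc(K,\gamma,n-1)$ is the minimum over such pairs of the BFS distance, which is polynomial. I expect the main obstacle to be making the duality identification rigorous and combinatorial. This means proving that the frontier cycles $\partial U$ form a basis of $H_{n-1}(K)$ and that the inclusion-induced map behaves as claimed. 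I would first try to prove this by hand, using the chain-level fact that any $(n-1)$-cycle in an $n$-complex in $\R^n$ bounds a unique $n$-chain in a triangulation of a large ball, which yields the coefficients $\lambda_U$. I would invoke Alexander duality only as a fallback.
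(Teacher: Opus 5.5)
Your proposal is correct in substance and follows the paper's route: the extended dual graph (your $\dualg{K}$ is the paper's $\dualg{K}^*$), the void-boundary basis of $H_{n-1}(K)$ from Alexander duality with $\lambda_{U_\infty}=0$, the criterion that $C$ is a cut iff the edges dual to $C$ contain a path between complement vertices with different coefficients, and BFS shortest paths; the paper differs only in phrasing the duality via locally constant functions in $\tilde H^0$ and in explicitly building the graph and the void cycles $\partial U$ in polynomial time with Dey et al.'s Void Boundary Reconstruction, a step you leave implicit and should justify. One slip to fix: the complement components of $K-C$ correspond to the components of the subgraph formed by the complement vertices together with the edges $e_\sigma$, $\sigma\in C$ (the paper's $\dualg{C}$), not of $\dualg{K}$ with those edges deleted---read literally, your criterion would declare $C=\emptyset$ a cut for the inner-boundary class of a triangulated annulus in $\R^2$---though your algorithmic paragraph (``deleting $e_\sigma$ merges its two endpoints'') already uses the correct version.
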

Our proof of \Cref{THM:embeddedeasy} in \Cref{SEC:efficientalgorithm} relies on Alexander duality, which can be used to construct the so-called \emph{extended dual graph} of~$K$. 
In~\cite{dual_graph_dey}, this graph was used to find optimal generators for certain homology classes: these correspond to minimum cuts in the graph. For us, the situation is precisely the opposite: as we will show, minimum homological cuts in $K$ correspond to shortest paths in the extended dual graph (allowing for efficient computation). The use of the extended dual graph in homology localization dates back to~\cite{Sullivan:thesis}; see also~\cite{Buehler2002}.

\subparagraph*{The Rips filtration.}
Complementing our hardness result, we give an efficiently computable \emph{heuristic} based on the Hausdorff distance to test adversarial robustness of bars arising from the Rips filtration of a metric data set $X$. Namely, we show that any bar in the barcode of the Rips filtration of length at least
    \[
        \hhy := \max_{A \subseteq X, ~|A| \leq k} d_{H}(X \setminus A, X)
    \]
    is automatically $k$-adversarily robust (in degree $0$)\footnote{The Rips filtration is not simplex-wise, therefore, as mentioned, we must break ties to obtain a simplex wise filtration to apply \Cref{DEF:Robustness}. This result does not depend on the choice of tiebreaker.}; see~\Cref{SEC:APPLICATION:HH} for a precise statement. Moreover, we show that the parameter~$\hhy$ can be computed in time $O(|X|^2 \log |X|)$. This result links adversarial robustness to classical stability results in persistent homology. 
    It shows that our definition is at least as expressive as a naive definition based only on the length of bars and the Hausdorff stability of the Rips filtration. That is, sufficiently long bars are outlier-robust according to our definition (as one would expect). But importantly, our definition is also capable of recognizing relatively short bars as robust; see \Cref{FIG:krobustexample}.

\begin{figure}[h]
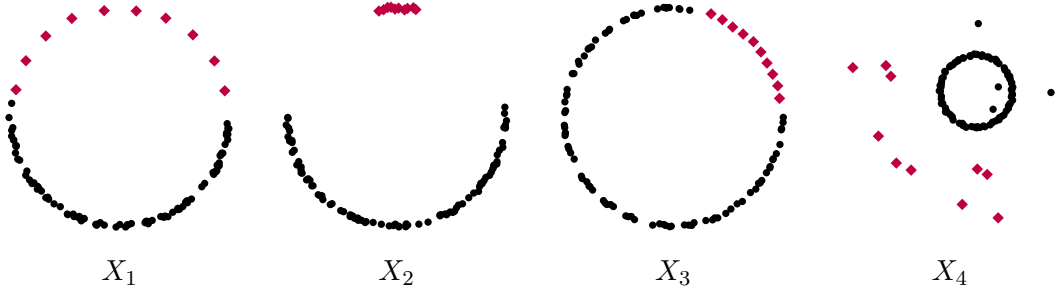

\centering
\include{tikz/krobustexample}
\caption{Four data sets in $\R^2$, each of size $100$, whose Rips filtrations each induce a $1$-dimensional persistent feature. For $X_1$ and $X_2$, these features are not $k$-adversarially robust for $k=10$, evidenced by the subsets $A_1 \subseteq X_1$ and $A_2 \subseteq X_2$ marked in red. The points in $A_2$ are quite dense, and so standard subsampling techniques will likely not remove them. On the other hand, the feature is $10$-robust in $X_3$, as its length exceeds $\mathcal{H}_{X_3, 10}$, which equals $d_H(X_3 \setminus A_3, X_3)$ for the subset $A_3 \subseteq X_3$ marked in red (cf.~\Cref{THM:main:HH}). In $X_4$, the  feature induced by the densely sampled circle is $10$-robust, even though its bar has length strictly less than $\mathcal{H}_{X_4, 10}$ (evidenced by the set $A_4$ marked in red). 
}
\label{FIG:krobustexample}
\end{figure}

%% file: tikz/cutexample.tex
\begin{tikzpicture}[scale=0.85]

\tikzset{
  doublemidarrow/.style={
    decoration={
      markings,
      mark=at position 0.7 with {\arrow{>>}}
    },
    postaction={decorate}
  }
}

\begin{scope}
\foreach \x in {0,1,2,3,4,5} {
    \foreach \y in {0,1,2,3} {
        \coordinate (v\x\y) at (\x,\y);
    }
}

\draw (2.5, -0.5) node {$K$};

\fill[gray!20] (v00) -- (v50) -- (v53) -- (v03) -- (v00);
\fill[white] (v11) -- (v12) -- (v22) -- (v21) -- (v11);
\fill[white] (v31) -- (v32) -- (v42) -- (v41) -- (v31);

\foreach \y in {0,1,2,3} {
    \draw (v0\y) -- (v5\y);
}
\foreach \x in {0,1,2,3,4,5} {
    \draw (v\x0) -- (v\x3);
}

\draw (v00) -- (v11);
\draw (v01) -- (v12);
\draw (v02) -- (v13);

\draw (v10) -- (v21);
\draw (v12) -- (v23);

\draw (v20) -- (v31);
\draw (v21) -- (v32);
\draw (v22) -- (v33);

\draw (v30) -- (v41);
\draw (v32) -- (v43);

\draw (v40) -- (v51);
\draw (v41) -- (v52);
\draw (v42) -- (v53);

\draw[blue, sloped, allow upside down, very thick] (v11) -- node {\midarrow} (v12);
\draw[blue, sloped, allow upside down, very thick] (v12) -- node {\midarrow} (v22);
\draw[blue, sloped, allow upside down, very thick] (v22) -- node {\midarrow} (v21);
\draw[blue, sloped, allow upside down, very thick] (v21) -- node {\midarrow} (v11);

\draw[red, sloped, allow upside down, very thick, doublemidarrow] (v31) -- node {} (v32);
\draw[red, sloped, allow upside down, very thick, doublemidarrow] (v32) -- node {} (v42);
\draw[red, sloped, allow upside down, very thick, doublemidarrow] (v42) -- node {} (v41);
\draw[red, sloped, allow upside down, very thick, doublemidarrow] (v41) -- node {} (v31);

\foreach \x in {0,1,2,3,4,5} {
    \foreach \y in {0,1,2,3} {
        \filldraw (v\x\y) circle (1.5pt);
    }
}


\end{scope}

\begin{scope}[xshift=5.5cm, yshift=0cm]
\foreach \x in {0,1,2,3,4,5} {
    \foreach \y in {0,1,2,3} {
        \coordinate (v\x\y) at (\x,\y);
    }
}

\draw (2.5, -0.5) node {$K - C_1$};

\fill[gray!20] (v00) -- (v50) -- (v53) -- (v03) -- (v00);
\fill[white] (v11) -- (v12) -- (v22) -- (v21) -- (v11);
\fill[white] (v31) -- (v32) -- (v42) -- (v41) -- (v31);


\foreach \y in {0,1} {
    \draw (v0\y) -- (v5\y);
}
\foreach \y in {2,3} {
    \draw (v0\y) -- (v1\y);
    \draw (v2\y) -- (v5\y);
}

\foreach \x in {2,3,4,5} {
    \draw (v\x0) -- (v\x3);
}

\draw (v01) -- (v02);
\draw (v11) -- (v12);
\draw (v01) -- (v12);

\draw (v02) -- (v03);
\draw (v00) -- (v01);

\draw (v12) -- (v13);
\draw (v10) -- (v11);

\draw (v00) -- (v11);
\draw (v02) -- (v13);

\draw (v10) -- (v21);
\draw (v12) -- (v23);

\draw (v20) -- (v31);
\draw (v21) -- (v32);
\draw (v22) -- (v33);

\draw (v30) -- (v41);
\draw (v32) -- (v43);

\draw (v40) -- (v51);
\draw (v41) -- (v52);
\draw (v42) -- (v53);

\fill[white] (v12) -- (v13) -- (v23) -- (v22) -- cycle;
\draw[dashed, thick] (v12) -- (v23);
\draw[dashed, thick] (v12) -- (v22);
\draw[dashed, thick] (v13) -- (v23);
\draw (v12) -- (v13);
\draw (v22) -- (v23);

\foreach \x in {0,1,2,3,4,5} {
    \foreach \y in {0,1,2,3} {
        \filldraw (v\x\y) circle (1.5pt);
    }
}


\end{scope}

\begin{scope}[xshift=11cm, yshift=0cm]
\foreach \x in {0,1,2,3,4,5} {
    \foreach \y in {0,1,2,3} {
        \coordinate (v\x\y) at (\x,\y);
    }
}

\draw (2.5, -0.5) node {$K - C_2$};

\fill[gray!20] (v00) -- (v50) -- (v53) -- (v03) -- (v00);
\fill[white] (v11) -- (v12) -- (v22) -- (v21) -- (v11);
\fill[white] (v31) -- (v32) -- (v42) -- (v41) -- (v31);

\fill[white] (v21) -- (v31) -- (v32) -- (v22) -- cycle;

\foreach \y in {0,1,2,3} {
    \draw (v0\y) -- (v5\y);
}
\foreach \x in {0,1,4,5} {
    \draw (v\x0) -- (v\x3);
}

\draw (v22) -- (v23);
\draw (v32) -- (v33);

\draw (v20) -- (v21);
\draw (v30) -- (v31);

\draw[dashed, thick] (v21) -- (v22);
\draw[dashed, thick] (v21) -- (v32);
\draw[dashed, thick] (v31) -- (v32);

\draw (v02) -- (v03);
\draw (v00) -- (v01);

\draw (v12) -- (v13);
\draw (v10) -- (v11);

\draw (v00) -- (v11);
\draw (v01) -- (v12);
\draw (v02) -- (v13);

\draw (v10) -- (v21);
\draw (v12) -- (v23);

\draw (v20) -- (v31);
\draw (v22) -- (v33);

\draw (v30) -- (v41);
\draw (v32) -- (v43);

\draw (v40) -- (v51);
\draw (v41) -- (v52);
\draw (v42) -- (v53);


\foreach \x in {0,1,2,3,4,5} {
    \foreach \y in {0,1,2,3} {
        \filldraw (v\x\y) circle (1.5pt);
    }
}


\end{scope}

\end{tikzpicture}

%% file: tikz/krobustexample.tex
\begin{tikzpicture}[scale=0.92]
    \begin{scope}[scale=0.75]
    \pgfmathsetseed{1}
    \def\radius{3}
    \def\n{90}
    \def\m{10}

    \def\noise{0.10}

    \foreach \i in {1,...,\n} {
        \pgfmathsetmacro{\angle}{180/(\n-1)*(\i-1)}
        \pgfmathsetmacro{\angleWithNoise}{\angle + (rnd-0.5)*\noise*180}  \pgfmathsetmacro{\radiusWithNoise}{\radius + (rnd-0.5)*\noise}       
        \coordinate (P\i) at (-\angleWithNoise:\radiusWithNoise*0.7);
        \fill(P\i) circle (2pt);
    };
    \foreach \j in {1,...,\m} {
        \pgfmathsetmacro{\anglem}{150/(\m-1)*(\j-1)}
        \pgfmathsetmacro{\angleWithNoisem}{\anglem + (rnd-0.5)*\noise*20 + 15}  \pgfmathsetmacro{\radiusWithNoisem}{\radius + (rnd-0.5)*\noise}       
        \coordinate (Pm\j) at (\angleWithNoisem:\radiusWithNoisem*0.7);
        \node[diamond,fill, scale=0.3, purple] (d) at (Pm\j) {};
    };
    \node at (0,-3) {$X_1$};
    \end{scope}

    \begin{scope}[xshift=4cm, scale=0.75]
    \pgfmathsetseed{2}

    \def\radius{3}
    \def\n{90}
    \def\m{10}

    \def\noise{0.10}

    \foreach \i in {1,...,\n} {
        \pgfmathsetmacro{\angle}{180/(\n-1)*(\i-1)}
        \pgfmathsetmacro{\angleWithNoise}{\angle + (rnd-0.5)*\noise*180}  \pgfmathsetmacro{\radiusWithNoise}{\radius + (rnd-0.5)*\noise}       
        \coordinate (P\i) at (-\angleWithNoise:\radiusWithNoise*0.7);
        \fill (P\i) circle (2pt);
    };
    \foreach \j in {1,...,\m} {
        \pgfmathsetmacro{\anglem}{20/(\m-1)*(\j-1)}
        \pgfmathsetmacro{\angleWithNoisem}{\anglem + (rnd-0.5)*\noise*20 + 80}  \pgfmathsetmacro{\radiusWithNoisem}{\radius + (rnd-0.5)*\noise}       
        \coordinate (Pm\j) at (\angleWithNoisem:\radiusWithNoisem*0.7);
        \node[diamond,fill, scale=0.3, purple] (d) at (Pm\j) {};
    };
    \node at (0,-3) {$X_2$};
    \end{scope}

    \begin{scope}[xshift=8cm, scale=0.75]

    \pgfmathsetseed{4}

    \def\radius{3}
    \def\n{90}
    \def\m{10}

    \def\noise{0.07}

    \foreach \i in {1,...,\n} {
        \pgfmathsetmacro{\angle}{280/(\n-1)*(\i-1)}
        \pgfmathsetmacro{\angleWithNoise}{\angle + (rnd-0.5)*\noise*140}  \pgfmathsetmacro{\radiusWithNoise}{\radius + (rnd-0.5)*\noise}       
        \coordinate (P\i) at (-\angleWithNoise:\radiusWithNoise*0.7);
        \fill (P\i) circle (2pt);
    };
    \foreach \j in {1,...,\m} {
        \pgfmathsetmacro{\angle}{60/(\m-1)*(\j-1)}
        \pgfmathsetmacro{\angleWithNoise}{\angle + 290}  \pgfmathsetmacro{\radiusWithNoise}{\radius + (rnd-0.5)*\noise}       
        \coordinate (P\j) at (-\angleWithNoise:\radiusWithNoise*0.7);
        \node[diamond,fill, scale=0.3, purple] (d) at (P\j) {};
    };
    \node at (0,-3) {$X_3$};
    \end{scope}

\begin{scope}[xshift=12cm, yshift=0cm, scale=0.75]

    \pgfmathsetseed{8}

    \foreach \i in {1,...,5}{%
      \pgfmathsetmacro{\x}{2.5*rand}%
      \pgfmathsetmacro{\y}{2.5*rand}%
      \node[diamond,fill, scale=0.3, purple] at (-1.5 + 0.25*\x, 0.5*\y) {};
      \node[diamond,fill, scale=0.3, purple] at (0.5*\x, -1.5 + 0.25*\y) {};
  }

      \foreach \i in {1,...,5}{%
      \pgfmathsetmacro{\x}{rand}%
      \pgfmathsetmacro{\y}{rand}%
      \fill (\x + 1, \y + 1) circle (2pt);
  }

    \def\radius{1}
    \def\n{85}

    \def\noise{0.07}

    \begin{scope}[xshift=0.5cm, yshift=0.5cm]
    \foreach \i in {1,...,\n} {
        \pgfmathsetmacro{\angle}{360/(\n-1)*(\i-1)}
        \pgfmathsetmacro{\angleWithNoise}{\angle + (rnd-0.5)*\noise*140}  \pgfmathsetmacro{\radiusWithNoise}{\radius + (rnd-0.5)*\noise}       
        \coordinate (P\i) at (-\angleWithNoise:\radiusWithNoise*0.7);
        \fill (P\i) circle (2pt);
    };
    \end{scope}

    \node at (0,-3) {$X_4$};
    \end{scope}


    






    
\end{tikzpicture}

%% file: Preliminaries.tex
\subsection{Simplicial complexes}
An (abstract) simplicial complex (on a set $Y$) is a collection $\simp$ of non-empty subsets $\sigma \subseteq Y$, called \emph{simplices}, such that if $\tau \in \simp$ and $\sigma \subseteq \tau$, then $\sigma \in \simp$. In that case, $\sigma$ is called a \emph{face} of $\tau$, and $\tau$ is called a \emph{coface} of $\sigma$. 
For $A \subseteq K$, we denote by $K \sminus A$ the largest subcomplex of $K$ which does not contain any simplices in $A$ (which is obtained by removing all simplices from $K$ which have a face in $A$). For $s \in \N$, we write $K^{(s)}$ for the set of $s$-simplices of~$K$, being the simplices $\sigma \in K$ with $|\sigma| = s+1$.
We write $V(K) := K^{(0)}$ for the vertices of $K$, and $E(K) := K^{(1)}$ for its edges. If there is a $p$ such that $K^{(p)} \neq \emptyset $ and $K^{(q)} = \emptyset$ for all $q>p$, then we say that $K$ is $p$-dimensional, and we call $K$ a $p$-complex. 
If $K$ is a $p$-complex and each simplex of $K$ is contained in a $p$-simplex, then $K$ is called a \emph{pure} $p$-complex.

\subparagraph*{Embedded complexes.} 
A \emph{geometric} $p$-simplex $\sigma$ in $\R^n$ is the convex hull of $p+1$ affinely independent vectors $v_0,\ldots,v_p \in \R^n$. The elements of $V(\sigma) := \{v_0,\ldots,v_p\}$ are called the vertices of $\sigma$. 
If $\tau$ is the convex hull of a subset of the vertices of $\sigma$, then $\tau$ is called a \emph{face} of $\sigma$. 
A \emph{geometric} simplicial complex $\simp$ in $\R^n$ is a collection of geometric simplices in $\R^n$ with the properties that 1) the face of every simplex is in $\simp$; 2) for every pair of simplices $\sigma,\tau \in \simp$ with non-empty intersection, $\sigma \cap \tau$ is in $\simp$. We denote the \emph{geometric realization} of $\simp$ by $\|\simp\| := \bigcup_{\sigma \in \simp} \sigma \subseteq \R^n$, i.e. the subset of $\R^n $  consisting of the union of simplices that make up $\simp$. 
If $\simp$ is a geometric simplicial complex, then $A(\simp) := \{V(\sigma) \, | \, \sigma \in \simp\}$ is an abstract simplicial complex. 
We say that an abstract simplicial complex $L$ is \emph{embedded in $\R^n$} if $L \cong A(\simp)$ for some geometric simplicial complex $\simp$ in $\R^n$, and we define the geometric realization of the embedded simplicial complex $L$ to be $\| L \| := \| \simp \|$.

\subparagraph*{Simplicial  homology.}
For a field $\mathbb{F}$, and $p \in \N$, we denote by $C_p(K;\mathbb{F})$ the simplicial $p$-chains on $K$.
We write $\partial_p : C_p(K;\mathbb{F}) \to C_{p-1}(K;\mathbb{F})$ for the boundary operator.
We have subspaces of cycles $Z_p(K;\mathbb{F}) = \mathrm{Ker } (\partial_p)$ and boundaries $B_p(K;\mathbb{F}) = \mathrm{Im } (\partial_{p+1})$, and the simplicial homology of $K$ is defined as $H_p(K;\mathbb{F}) = Z_p(K;\mathbb{F}) / B_p(K;\mathbb{F})$. Throughout, if we omit $\mathbb{F}$ from the notation, we mean homology over any field. For two complexes $K \subseteq K'$, we denote the inclusion $K \hookrightarrow K'$  by~$\incl_{K \hookrightarrow K'}$. Then, we write 
$
\ihomo_{K \hookrightarrow K'} : H_p(K) \to H_p(K'),~[c] \mapsto [\incl_{K \hookrightarrow K'}(c)]
$
for the induced maps in homology (suppressing $p$ in the notation).

\subsection{Persistent homology}
A persistence module is a functor $M:(\R,\leq ) \to \mathbf{vect}_\mathbb{F}$, where $\mathbf{vect}_\mathbb{F}$ denotes the category of finitely generated vector spaces. A morphism between persistence modules $M$ and $N$ is a natural transformation of functors $f: M \to N$. 
There exists a multiset of intervals $\bc(M)$, called the barcode of $M$, that completely captures $M$ up to isomorphism. We write $\mathbb{I}_{I}$ for the interval module over $I$, i.e, the persistence module which is equal to $\mathbb{F}$ on $I$, connected by identity morphisms, and $0$ elsewhere. Then there is an isomorphism $M \cong \bigoplus_{I \in \bc(M)} \mathbb{I}_I$~\cite{botnan2020decomposition}.

A filtration $\fsimp = (\simp_i)_{0 \leq i \leq m}$ of simplicial complexes naturally defines a persistence module  $\PH_p(\fsimp)$, called the persistent homology of $\fsimp$, which is given at $t\in \R$ by $\PH_p(\fsimp)_t = \mathrm{H}_p(K_{\lfloor t \rfloor})$, and whose structure maps are maps induced by inclusions.
If $\fsimp$ is a simplex-wise filtration, there is at most one birth or death of a bar at each step of the filtration. Hence, any bar $B = [i,j) \in \bc(\PH_p(\fsimp))$ then uniquely corresponds to a pair of simplices $(\sigma_B, \tau_B)$, inserted at steps $i$ and $j$ respectively. 
This is called a \emph{persistence pairing}, and we sometimes refer to the bar as $B = [\sigma_B, \tau_B)$. We denote by $\predeath{B} := K_{d-1} = K_d \setminus \{\tau_B\}$ the \emph{predeath complex} of $B$.
For a filtration $\fsimp$ and a non-decreasing function $f: \N \to \R$, we get a persistence module $\PH_p(\fsimp, f)$ given at time $t \in R$ by $\mathrm{H}_p(K_{\sup\{i : f(i)\leq t\}})$. We think of $f$ as a reparametrization of the filtration. Note that if $f$ is the identity, $\PH_p(\fsimp,f) = \PH_p(\fsimp)$. For each bar $B$ in the barcode of $\PH_p(\fsimp,f)$ there is a bar $[i,j)$ in the barcode of $\PH_p(\fsimp)$ such that $B = [f(i),f(j) )$.

%% file: inducedmatchings.tex
\label{SEC:inducedmatching}
\subsection{Induced matchings} 
While the barcode of a persistence module $M$ is unique, the isomorphism ${M \cong \bigoplus_{I \in \bc(M)} \mathbb{I}_I}$ is not. For a morphism $f:M \to N$, this makes it difficult to relate the barcodes $\bc(M)$ and~$\bc(N)$ to one another in a way that reflects the algebraic structure of $f$. In \cite{Bauer2013InducedMA}, a procedure is described that associates a partial matching $\im_f :\bc(M) \leftrightarrow \bc(N)$ with $f$, in a way that only depends on the barcodes of $M$, $N$, and $\mathrm{Im}(f)$, the image persistence module of $f$. Denote by $\langle \, \cdot \, , d\rangle_M \subseteq \bc(M)$ the bars in the barcode of the form $[ b,d)$ (for simplicity, we assume all intervals in the barcodes of $M$ and $N$ are half-open).  If $f$ is injective, it can be shown that $|\langle \, \cdot \, , d\rangle_M| \leq |\langle \, \cdot \, , d\rangle_N|$.
Moreover, if we order the bars in both $\langle \, \cdot \, , d\rangle_M$ and $\langle \, \cdot \, , d\rangle_N$ by their length, there is an order preserving injection $\im_f^d:\langle \, \cdot \, , d\rangle_M \hookrightarrow \langle \, \cdot \, , d\rangle_N$, with the property that if $\im_f^d([b,d)) = [b',d)$, then $b' \leq b$. Note that if $M$ and $N$ come from simplex-wise filtrations, the sets $\langle \cdot,d\rangle_M$ and  $\langle \cdot,d\rangle_N$ consist of at most one element. The induced matching $\im_f$ is then defined as the union of the matchings $\im_f^d$. If $f$ is surjective, there is a similar way to match bars with the same birth in an order preserving way.
For arbitrary $f :M \to N$, there is a factorization $f : M \twoheadrightarrow \mathrm{Im}(f) \hookrightarrow N$ of $f$ into a composition of a surjective and an injective map. The induced matching $\im_f$ is then defined as the composition of the induced matchings of $M \twoheadrightarrow \mathrm{Im}(f)$ and $\mathrm{Im}(f) \hookrightarrow N$. The induced matching is \emph{not} functorial in~$f$.

A persistence module $M$ is called $\epsilon$-trivial if for each $t \in \R$, the internal morphism $\phi^M_{t \to t+\epsilon}: M_t \to M_{t+\epsilon}$ is the zero morphism. The following theorem is the main result of \cite{Bauer2013InducedMA}.

\begin{theorem}[Induced matching theorem~\cite{Bauer2013InducedMA}] \label{THM:inducedmatching}
    Let $f : M \to N$ be a morphism with $\epsilon$-trivial kernel. Then each bar $[b,d) \in \bc(M)$ with $d-b>\epsilon$ is matched to a bar in $\bc(N)$ by $\im_f$. If $\mathrm{coker}(f)$ is $\epsilon$-trivial, $\im_f$ matches each bar $[b,d) \in \bc(N)$ with $d-b>\epsilon$ to a bar in~$\bc(M)$.
\end{theorem}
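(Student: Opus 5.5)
The plan is to reduce the theorem to the two elementary cases in which $\im_f$ is defined, using the factorization $f : M \twoheadrightarrow \mathrm{Im}(f) \hookrightarrow N$. Write $f = j \circ q$ with $q$ surjective and $j$ injective. Since $\ker f = \ker q$ and $\mathrm{coker} f = \mathrm{coker}\, j$, the hypothesis on the kernel only concerns $q$, and the hypothesis on the cokernel only concerns $j$. The goal is then to prove two structural lemmas, one for each piece, and compose them.

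\textbf{Surjection lemma.} For surjective $q : M \twoheadrightarrow I$, the matching $\im_q$ pairs bars with equal birth. If $\im_q([b,d)) = [b,d')$, then $d' \le d$, and every bar of $I$ is matched. If moreover $\ker q$ is $\epsilon$-trivial, then in addition $d' \ge d - \epsilon$, and every bar $[b,d)$ of $M$ with $d - b > \epsilon$ is matched. The matched bar $[b,d')$ then has positive length, so it is a genuine bar of $I$.

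\textbf{Injection lemma.} For injective $j : I \hookrightarrow N$, the matching $\im_j$ pairs bars with equal death. If $\im_j([b,d)) = [b',d)$, then $b' \le b$, and every bar of $I$ is matched. If $\mathrm{coker}\, j$ is $\epsilon$-trivial, then in addition $b \le b' + \epsilon$, and every bar of $N$ of length $> \epsilon$ is matched.

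Given both lemmas, the theorem follows by composition. A bar $[b,d)\in\bc(M)$ with $d-b>\epsilon$ is matched by $\im_q$ to a nonempty bar of $\mathrm{Im} f$. That bar is then matched by $\im_j$, since all bars of $\mathrm{Im} f$ are matched. For the cokernel statement, a bar of $N$ of length $>\epsilon$ is matched by $\im_j$ to a bar of $\mathrm{Im} f$. That bar is matched back to $M$ by $\im_q$, again because all bars of $\mathrm{Im} f$ are matched.

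To prove the injection lemma, I would use a counting characterization of bars. For fixed $d$ and $t<d$, the number of bars of a module $I$ of the form $[b,d)$ with $b \le t$ equals
\[
\dim\ker(I_t \to I_d) \;-\; \dim \textstyle\bigcup_{d'<d}\ker(I_t\to I_{d'}),
\]
where the union is increasing, so it is a subspace. Injectivity of $j$ makes these kernels for $I$ subspaces of the corresponding kernels for $N$, compatibly in $d'$. From this one obtains the inequality $\#\{[b,d)\in\bc(I): b\le t\} \le \#\{[b,d)\in \bc(N): b \le t\}$ for all $t$. Matching the two lists in order of decreasing length then yields $b' \le b$ and the fact that every bar of $I$ is matched.

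For the $\epsilon$-bound, I would use $\epsilon$-triviality of the cokernel. It says $N_{t-\epsilon}\to N_t$ lands in $j(I_t)$. Hence every kernel element of $N_{t-\epsilon}\to N_d$ is pushed into $I_t$, giving the reverse inequality
\[
\#\{[b,d)\in \bc(N): b\le t-\epsilon\} \le \#\{[b,d)\in\bc(I): b\le t\}.
\]
Together with the order-preserving matching, this yields $b\le b'+\epsilon$ and the matching of all bars of $N$ of length $>\epsilon$. The surjection lemma is the dual statement, proved with images $\mathrm{Im}(M_b \to M_t)$ in place of kernels and fixing births instead of deaths. Alternatively, it follows by applying the injection lemma to the pointwise dual modules.

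The main obstacle I expect is making the reverse counting inequality precise. One must check that pushing forward by $\epsilon$ sends a kernel element of $N_{t-\epsilon} \to N_d$ that is not in $\bigcup_{d'<d}\ker(N_{t-\epsilon}\to N_{d'})$ to a kernel element of $I_t \to I_d$ that is not in $\bigcup_{d'<d}\ker(I_t\to I_{d'})$, and that this map is injective modulo these subspaces. This is where the interval-decomposition bookkeeping has to be done carefully. I would handle it by choosing interval bases of $N$ and $I$ and arguing generator by generator.
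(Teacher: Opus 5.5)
Your proposal is correct and is essentially the standard Bauer--Lesnick argument, which the paper cites rather than reproves: factor $f$ through $\mathrm{Im}(f)$, prove the structure lemmas for injections with $\epsilon$-trivial cokernel and for surjections with $\epsilon$-trivial kernel via the counting inequalities you describe, and compose the two matchings. The step you flag as the main obstacle is immediate at the level of subspaces, with no generator-by-generator bookkeeping: if $\phi^N_{t-\epsilon\to t}(x)\in\ker(N_t\to N_{d'})$, then already $x\in\ker(N_{t-\epsilon}\to N_{d'})$. The only tacit point is that your counting formula needs $\mathrm{Im}(f)$ to have only half-open bars, which holds because it is a quotient of $M$ and a submodule of $N$.
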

We will need this result for our proof of \Cref{THM:main:HH} in~\Cref{SEC:APPLICATION:HH}.

\subsection{Adversarial robustness and minimum homological cuts} \label{SEC:proofequiv}
In this section, we show that $k$-adversarial robustness of a bar can be determined by solving a minimum homological vertex cut problem. That is, we prove~\Cref{THM:main:LOC}.%
\begin{proof}[Proof of~\Cref{THM:main:LOC}]
    Let $B = [b, d)$ be a bar in the barcode of $\PH_p(\fsimp)$, thus with birth simplex $\sigma_B$ inserted at step $b$, and death simplex $\tau_B$ inserted at time $d$. Let $K_B = K_{d-1}$ be its predeath complex.
    Let $A \subseteq K^{(s)}$, and write $\imatch_{\fsimp \sminus A \hookrightarrow \fsimp}$ for the matching induced by the inclusion $\fsimp \sminus A \hookrightarrow \fsimp$. We show that $B \in \Ima(\imatch_{\fsimp \sminus A \hookrightarrow \fsimp})$ if and only $[\partial \tau_B] \in \Ima(\ihomo_{K_B - A \hookrightarrow K_B})$.

    Note that any representative cycle for $B$ is homologous to a scalar multiple of $\partial \deathsimplex{B}$ in $C_p(\predeath{B})$, as $[\partial \deathsimplex{B}]$ generates the kernel of $H_p(\predeath{B}) \to H_p(\predeath{B} \cup \{\deathsimplex{B}\})$.
    If $B \in \mathrm{Im}(\imatch_{\fsimp \sminus A \hookrightarrow \fsimp})$, there is thus a bar $B' =[b', d)$ in the barcode of the image module $\mathrm{Im}(\PH_p(\fsimp - A) \to \PH_p(\fsimp))$.
    Then, any representative cycle $\xi \in C_p(K_{b'} \sminus A) \subseteq C_p(K_{b'})$ for $[b',d)$, when considered as a homology class $[\xi] \in H_p(\predeath{B})$ through the inclusions $K_{b'} \sminus A \hookrightarrow K_{b'} \hookrightarrow \predeath{B}$, is in the kernel of $H_p(K_B) \to H_p(\predeath{B} \cup \{\deathsimplex{B}\})$, and is hence homologous to a scalar multiple of $[\partial \deathsimplex{B}]$.
    
    Conversely, suppose that $[\partial \deathsimplex{B}]$ is in the image of $ H_p(\predeath{B} \sminus A) \to H_p(\predeath{B})$, and let $\xi \in C_p(\predeath{B} \sminus A)$ be a cycle such that $[\xi] = [\partial \deathsimplex{B}] \in H_p(\predeath{B})$. Let $\xi_1,\ldots,\xi_k$ denote representative cycles for summands of $\mathrm{Im}(\PH_p(\fsimp \sminus A) \to \PH_p(\fsimp))$ so that the $[\xi_i]$ are linearly independent in $H_p(\predeath{B})$, and $[\xi] = \lambda_1[\xi_1]+ \ldots + \lambda_k[\xi_k] $. Since $[\xi]$ is a non-zero element in the kernel of the map
    \begin{align*}
        \mathrm{Im}\bigg(H_p(\predeath{B} \sminus A) \to H_p(\predeath{B})\bigg) \to \mathrm{Im}\bigg(H_p((\predeath{B} \cup \{\deathsimplex{B}\}) \sminus A) \to H_p(\predeath{B} \cup \{\deathsimplex{B}\})\bigg),
    \end{align*}
    we see that one of the generators $\xi_j$ must be in this kernel, and thus generate a summand isomorphic to $\mathbb{I}_{[b', d)}$ for some $b \leq b' < d$. This shows that $B$ is in the image of $\imatch_{\fsimp \sminus A \hookrightarrow \fsimp}$. 
\end{proof}

%% file: zero_dimensional.tex
\subsection{An efficient algorithm for zero-dimensional cuts}
\label{SEC:H0easy} Let $\gamma \in H_0(K)$. We show that minimum homological vertex cuts for $\gamma$ have a very particular structure, allowing us to compute them efficiently.
Let $\Gamma_1, \ldots, \Gamma_q \subseteq V(K)$ be the vertex sets of the connected components of~$K$, and let ${c_1 \in V(\Gamma_1), \ldots, c_q \in V(\Gamma_q)}$. Then, $H_0(K)$ is generated by the classes $[c_1], \ldots, [c_q]$, meaning we can write $\gamma = \sum_{i=1}^q \lambda_i [c_i]$, with $\lambda_i \in \mathbb{F}$. 

\begin{proposition} \label{PROP:H0component}
Let $\gamma = \sum_{i=1}^q \lambda_i [c_i] \in H_0(K)$ as in the above. A subset $C \subseteq V(K)$ is a homological vertex cut for $\gamma$ iff $C \supseteq V(\Gamma_\ell)$ for some $1 \leq \ell \leq q$ with $\lambda_\ell \neq 0$. In particular, a homological mincut for $\gamma$ is of the form $C = V(\Gamma_\ell)$, where $\ell = \argmin_i \{ |V(\Gamma_i)| : \lambda_i \neq 0\}$.

\end{proposition}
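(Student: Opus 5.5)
The plan is to describe $H_0(K \sminus C)$ and the map $\ihomo_{K \sminus C \hookrightarrow K}$ explicitly in terms of connected components. Then we read off when $\gamma$ lies in the image.

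First, recall that $K \sminus C$ for $C \subseteq V(K)$ is obtained by deleting the vertices in $C$ and all simplices containing them. Hence every connected component of $K \sminus C$ lies inside a single component $\Gamma_i$ of $K$. Consequently $H_0(K \sminus C)$ is spanned by classes $[v]$ of vertices $v \in V(K) \setminus C$. The map $\ihomo$ sends $[v]$ to $[c_i]$, where $\Gamma_i$ is the component of $K$ containing $v$. Therefore
\[
\Ima(\ihomo_{K \sminus C \hookrightarrow K}) = \mathrm{span}\{ [c_i] : V(\Gamma_i) \not\subseteq C \}.
\]
The one point to check carefully is the inclusion ``$\supseteq$''. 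If $V(\Gamma_i)\not\subseteq C$, pick $v \in V(\Gamma_i)\setminus C$. The vertex $\{v\}$ survives in $K \sminus C$ because its only face is itself, and $[v]=[c_i]$ in $H_0(K)$ since $v$ and $c_i$ lie in the same component of $K$.

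Second, since $[c_1],\ldots,[c_q]$ form a basis of $H_0(K)$, the class $\gamma=\sum_i \lambda_i[c_i]$ lies in this span iff $\lambda_\ell = 0$ for every $\ell$ with $V(\Gamma_\ell)\subseteq C$. Negating this gives the claimed characterization: $C$ is a homological vertex cut iff $C \supseteq V(\Gamma_\ell)$ for some $\ell$ with $\lambda_\ell \neq 0$.

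For the \emph{in particular} statement, note that any cut contains some $V(\Gamma_\ell)$ with $\lambda_\ell\neq 0$, so its size is at least $\min\{|V(\Gamma_i)| : \lambda_i\neq 0\}$. Conversely, the set $V(\Gamma_\ell)$ for the minimizing $\ell$ is itself a cut by the characterization. This assumes $\gamma \neq 0$; if $\gamma = 0$, no cut exists and the minimum is vacuous.

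The main (mild) obstacle is the image computation, specifically justifying that components of $K \sminus C$ do not straddle components of $K$ and that a surviving vertex represents $[c_i]$. Both follow directly because $K\sminus C \subseteq K$ and $0$-homology is spanned by vertex classes, with two vertices homologous iff they are joined by an edge path.
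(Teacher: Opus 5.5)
Your proof is correct and follows essentially the same route as the paper: both compute the image of $\ihomo_{K \sminus C \hookrightarrow K}$ from the surviving vertex classes $[v]$, $v \in V(K)\setminus C$, and conclude that $\gamma$ lies in it iff every component $\Gamma_i$ with $\lambda_i \neq 0$ keeps at least one vertex. You also spell out the minimality argument and the degenerate case $\gamma = 0$, which the paper leaves implicit.
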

\begin{proof}
    Write $V = V(K)$. For any $A \subseteq V$, and any $v \in V \setminus A$ we have $\ihomo_{K \sminus A \hookrightarrow K}([v]) = [v]$. Furthermore, for any $v, w \in V$, we have $[v] = [w]$ (in $H_0(K)$) if and only if $v$ and $w$ lie in the same connected component of $K$.    
    Since the classes $[v]$, $v \in V \setminus A$, generate $H_0(K \sminus A)$, we thus have $\gamma \in \mathrm{Im}(\ihomo_{K \sminus A \hookrightarrow K})$ if and only if $V \setminus A$ contains a vertex in each connected component~$\Gamma_i$ of $K$ with $\lambda_i \neq 0$. 
    Thus, $\gamma \not\in \mathrm{Im}(\ihomo_{K \sminus A \hookrightarrow K})$ (i.e., $A$ is a homological cut) iff~$A$ contains all the vertices of at least one connected component $\Gamma_\ell$ of $K$ with $\lambda_\ell \neq 0$.
\end{proof}

\begin{proof}[Proof of~\Cref{THM:intro:H0easy}]
To determine a minimum homological vertex cut of a $0$-dimensional class in a simplicial complex $K$, it suffices by~\Cref{PROP:H0component} to find its connected components. We can do so in time linear in $|V(K)| + |E(K)|$ using a depth-first search.
\end{proof}

%% file: np-hardness.tex
\subsection{NP-hardness of the general problem} \label{SEC:NPhardness} In this section, we prove~\Cref{THM:NPhardness}. That is, we show that the minimum homological cut problem is hard when $p=s=1$. Our arguments apply for homology taken over any field, and even over $\mathbb{Z}$.  We use a reduction from the following NP-hard problem. 
\begin{problem}[Exact cover by 3-sets (X3C)] Given a set $U$, and a collection $\mathcal{S} \subseteq \mathcal{P}(U)$ of subsets of~$U$, each of size exactly $3$, determine whether $\mathcal{S}$ contains an \emph{exact cover} of $U$, i.e., whether there is an $\mathcal{S}' \subseteq \mathcal{S}$ so that each element of $U$ occurs in \emph{exactly one} element of $\mathcal{S}'$.
\end{problem}
We proceed as follows:
To a given instance $(U,\mathcal{S})$ of the X3C problem, we associate a topological space ${X = X(U,\mathcal{S})}$, together with a specific homology class $\gamma \in H_1(X)$ (\Cref{CONSTR:reduction}). 
Then, we construct a  triangulation $K = K(U,\mathcal{S})$ of this space, which satisfies useful technical properties (\Cref{LEM:Triangulation}). 
Finally, we show that if there exists a minimum cut for $\gamma$ in $K$ of a particular size, there exists a solution to the X3C instance, and vice versa. 

\begin{construction}\label{CONSTR:reduction}
    Let $(U,\mathcal{S})$ be an instance of exact cover by 3-sets. 
    For each $u \in U$, denote by $\mathcal{S}_u = \{ S_{u,1} , \ldots S_{u,m_u}\} \subseteq \mathcal{S}$ the elements of $\mathcal{S}$ containing $u$. Let $F_u$ denote the surface obtained from a 2-dimensional disk by removing $|\mathcal{S}_u|$ open disks from its interior. Label the outer boundary of $F_u$ by $u$, and the interior boundary components by the respective elements of $\mathcal{S}$ they correspond to. Choose an orientation for $F_u$, inducing an orientation on its boundary components. Choose generators for the homology groups of the boundary components such that $\partial [F_u] = [u] + [S_{u,1}] + \ldots + [S_{u,m_u}]$; see~\Cref{fig:Surfaces}. 
    
    We obtain $X=X(U,\mathcal{S})$ from $\bigsqcup_{u\in U} F_u $ by gluing together the surfaces in the following way: We identify all outer boundary components with each other along orientation preserving homeomorphisms, so that $[u] = [v]$ in $H_1(X)$ for each $u,v \in U$. We also identify all interior boundary components with each other that have the same label, again in an orientation preserving way. This means that if $S = \{u,v,w\}$, then after gluing $S = S_{u,i_u} = S_{v,i_v} = S_{w,i_w}$, and $[S_{u,i_u}] = [S_{v,i_v}]=[S_{w,i_w}]$ in $H_1(X)$. Finally, we set $\gamma = [u] \in H_1(X)$ for some $u \in U$.
    \end{construction}

\begin{figure}[h]
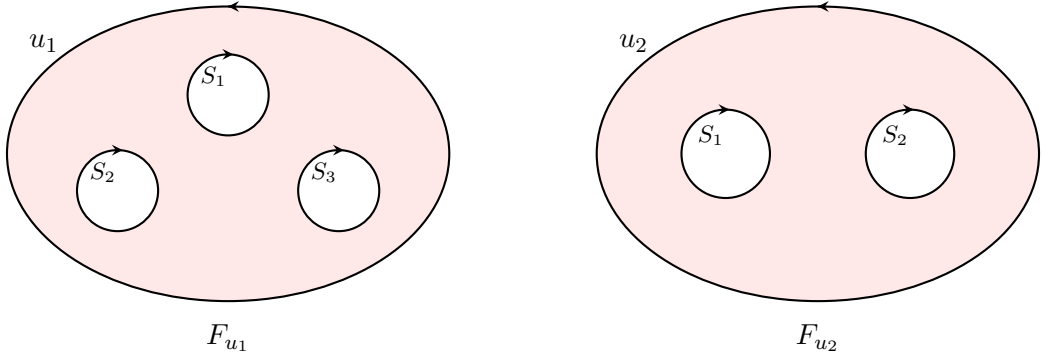

    \centering
    \include{tikz/Surfaces}
    \caption{Surfaces $F_{u_1}$ and $F_{u_2}$ involved in the construction of $X(U,\mathcal{S)}$ for $(U,\mathcal{S}) = (\{u_1,u_2,u_3,u_4\}, \{S_1,S_2,S_3\})$ with $S_1 = \{u_1,u_2,u_3\}, S_2= \{u_1,u_2,u_4\}, S_3 = \{u_1,u_3,u_4\}$. The surfaces are glued along boundary components with the same label, in an orientation preserving way.}
    \label{fig:Surfaces}
\end{figure}

\begin{lemma}\label{LEM:Triangulation}
Let $(U,\mathcal{S})$ be an instance of X3C, and let $k=\max_{u \in U} |\{S \in \mathcal{S}\, | \, u \in S\}| $. Set $c=c(U,\mathcal{S}):= 5\cdot \lceil \log_3(k)\rceil +3$. There exists a triangulation $K = K(U,\mathcal{S})$ for the space $X(U,\mathcal{S})$ of Construction~\ref{CONSTR:reduction}, obtained by constructing a triangulation $L_u$ for each surface $F_u$ and gluing them together, that has the following properties:
    \begin{enumerate}
        \item $K$ is a pure 2-complex consisting of at most $O(|U| \cdot |S|)$ $2$-simplices;
        \item A minimum homology edge cut for $[u] \in H_1(L_u)$ consists of precisely $c+2$ edges; 
        \item A minimum homology edge cut for $[u]$ contains precisely one edge from $u$, and precisely one edge from any of the interior boundary components $S_{u,i}$. For any edge in $u$ and any edge in any $S_{u,i}$, there is a minimum cut for $[u]$ containing both those edges. 
    \end{enumerate}
\end{lemma}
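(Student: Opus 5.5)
We build each $L_u$ explicitly and then glue, so that the boundary circles $u$ and $S_{u,i}$ receive the same fixed subdivision in every surface that contains them. This makes the gluing simplicial.

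\textbf{Boundary cycles.} Triangulate every boundary circle (outer and inner) as a cycle with the same fixed number $N$ of edges, say $N=3$ or a small constant. Any two copies can then be identified by a simplicial, orientation-preserving isomorphism. Take $K$ to be the quotient of $\bigsqcup_u L_u$. It is a simplicial complex provided each $L_u$ has no edge with both endpoints on one boundary cycle other than the cycle edges, and no triangle spanned by boundary vertices alone. We enforce both by putting a collar of triangles along every boundary circle.

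\textbf{Interior: a tree of pairs of pants.} Decompose $F_u$, a disc with $m_u\le k$ holes, as a ternary tree of depth $\lceil \log_3 k\rceil$ of "pants with up to four boundary circles". Each such piece has one parent circle and at most three child circles, and the leaves are the inner boundaries $S_{u,i}$. Triangulate each piece with a fixed gadget in which every internal circle is a cycle of length $5$ and consecutive levels are joined by thin annuli. Each root-to-leaf path then crosses $\lceil\log_3 k\rceil$ internal levels. The total number of triangles is $O(m_u)$, and summing over $u$ gives $O(\sum_u m_u)\subseteq O(|U|\,|\mathcal S|)$, which proves (1). Purity is immediate, since the complex is built from triangulated surfaces.

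\textbf{The cut count via duality.} For (2) and (3), reinterpret a homological edge cut of $[u]$ in $L_u$. Because $H_1(L_u)$ is generated by the boundary circles with the single relation $\partial[F_u]=[u]+\sum_i[S_{u,i}]$, a set $C$ of edges kills $[u]$ from the image exactly when every cycle of $L_u - C$ homologous to $u$ is destroyed. Equivalently, in the dual picture, $C$ must contain an edge path, cutting through triangles, that joins $u$ to some inner boundary $S_{u,i}$, or $u$ to itself around all the holes.

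Removing an edge on a boundary circle breaks that circle. Crossing from $u$ to $S_{u,i}$ through the interior requires severing one edge on each crossed level annulus, and by the gadget design each annulus needs exactly $5$ edges. This gives a total of
\[
5\lceil\log_3 k\rceil + 3 + 2 = c+2,
\]
where the constant $3$ accounts for the collars, and the $2$ is one edge on $u$ plus one edge on $S_{u,i}$. The lower bound is a Menger-type argument: exhibit $c+2$ edge-disjoint cycles in $L_u - \{\text{one boundary cycle}\}$, each representing $[u]$, so that any cut must hit each of them. Symmetry of the gadget under rotation of the boundary cycles gives the "any edge of $u$, any edge of $S_{u,i}$" part of (3).

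\textbf{Main obstacle.} The hard step is tuning the gadget so that (2) and (3) hold exactly. We need the minimum to equal precisely $c+2$ for every $m_u\le k$, which means padding shallow leaves so that all leaves sit at equal depth. We also need every minimum cut to use exactly one edge of $u$ and one of some $S_{u,i}$, and never a cheaper route, such as cutting around all the holes or running through the interior without touching the boundary. Ruling these out requires each interior level to cost more than the boundary edges it would replace. I would first verify this with the explicit disjoint-cycle packing on a single pants piece and then induct along the tree.
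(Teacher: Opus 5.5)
\textbf{There is a genuine gap: properties (2) and (3) are never actually proved.} Your skeleton is essentially the paper's: a ternary nesting of three-holed pieces to depth $\lceil\log_3 k\rceil$, collars on the boundary circles, leaves at equal depth, and counting via dual paths. But no gadget is specified. The split ``$5$ per level, $3$ for the collars, $2$ boundary edges'' is chosen to reproduce $c+2=5\lceil\log_3 k\rceil+5$, not computed from a triangulation. Your ``main obstacle'' paragraph defers exactly the content of the lemma.

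Two supporting claims are also wrong.
\begin{itemize}
\item \emph{Characterization of cuts.} Apply \Cref{PROP:cut_alex_dual_complete} to the planar complex $L_u$. The class $[u]$ is dual to a function with the same nonzero value on every hole and $0$ on the unbounded component. So $C$ is a cut iff its dual edges contain a path from $v_\infty$ to some hole. A dual path from $u$ back to $u$ around all the holes is \emph{not} a cut: the sum of the hole boundaries survives in $L_u - C$. Hence the minimum cut is just the dual distance from $v_\infty$ to the nearest hole, and the ``cheaper routes'' you want to rule out do not exist.
\item \emph{Cost per level.} A dual path uses exactly one edge of each circle it crosses. So making the internal circles $5$-cycles does not make a level cost $5$. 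A thin annulus between two cycles can be crossed with $3$ edges (one edge of each cycle and one rung), whatever the cycle lengths. Thus ``each annulus needs exactly $5$ edges'' is false for the thin annuli you describe. The per-level cost is a shortest-path length through the actual gadget and must be computed.
\end{itemize}

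\textbf{Property (3) does not follow from rotational symmetry.} A symmetry of the gadget moves the chosen edge on $u$ and the chosen edge on $S_{u,i}$ simultaneously. It therefore only gives orbits of pairs, not every pair (edge $e$ of $u$, edge $f$ of $S_{u,i}$) chosen independently.

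\textbf{How the paper closes both gaps.} It uses a concrete gadget with an ``any-to-any'' property. This is the $12$-triangle double collar between three nested $3$-cycles. Its minimum cut is outer edge, rung, middle edge, rung, inner edge, so $5$ edges, and it can start at any outer edge and end at any inner edge.
\begin{itemize}
\item Gluing the collar to a $4$-triangle disk with three holes (cut cost $2$) gives the block $M$, with cut cost $6$.
\item Nesting copies of $M$ into every hole adds $5$ per level, because the connecting edge is shared. This gives $5\lceil\log_3 k\rceil+1$ after the tree.
\item Unused holes are capped with a triangle. Final double collars on the used holes add $4$, for a total of $c+2$. The any-edge freedom propagates through the collars, and all used holes sit at the same depth.
\end{itemize}

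Your Menger-type lower bound is legitimate here: distance layers in the dual graph give edge-disjoint cycles homologous to $[u]$. But it too yields the exact number only once an explicit gadget is fixed.
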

\begin{proof}
    We give an explicit construction of this triangulation in~\Cref{SEC:omittedproofs:hardness}.
\end{proof}
Using~\Cref{LEM:Triangulation}, we can sketch our reduction; a full proof is given in~\Cref{SEC:omittedproofs:hardness}.
\begin{proof}[Proof sketch of~\Cref{THM:NPhardness}]
    We show that there exists a solution to the X3C-instance $(U,\mathcal{S})$ if and only if there exists a cut for $\gamma \in H_1(K(U,\mathcal{S)})$ consisting of at most $c\cdot |U| + \frac{|U|}{3}+1$ edges. If $\mathcal{S^* \subseteq \mathcal{S}}$ is a solution to $(U,\mathcal{S)}$, for each $u\in U$, we pick a cut $C_u$ for $[u] \in H_1(L_u)$ with the property that it contains one edge in the boundary labelled by $u$, and one in the boundary labeled by the set $S_{u} \in S^*$ for which $u \in S_u$. If we consistently choose the same edges in the intersections between surfaces, we can verify that $C = \bigcup_{u\in U} C_u$ is a cut for $\gamma$ consisting of precisely $c\cdot |U| + \frac{|U|}{3}+1$ edges.
    Conversely, suppose $C$ is any cut for $\gamma$. Then it can be shown that for each $u\in U$, $C\cap L_u$ is a cut for $[u] \in H_1(L_u)$. By point 3 of \Cref{LEM:Triangulation}, we then obtain a cover $\mathcal{C}_C$ for $U$. It can be checked that $|C| \geq c \cdot |U| + |\mathcal{C}_C| +1$. It follows that if $|C| \leq c\cdot |U| + \frac{|U|}{3}+1$, then $|\mathcal{C}_C| = \frac{|U|}{3}$, and so $\mathcal{C}_C$ is an exact cover.
\end{proof}

%% file: tikz/Surfaces.tex
\begin{tikzpicture}[scale=0.98]
    \filldraw[fill=red!9, draw=black, thick] (0,0) ellipse (3cm and 2cm); 
    \draw[->, >=stealth, thick] (0,2) arc (89.9:90.1:.01cm); 
    \filldraw[fill=white, draw=black, thick] (-1.5,-.5) circle (.55cm); 
    \draw[->, >=stealth, thick] (-1.48,0.05) -- (-1.43,0.05);
    \filldraw[fill=white, draw=black, thick] (1.5,-.5) circle (.55cm);
    \draw[->, >=stealth, thick] (1.52,0.05) -- (1.57,0.05);
    \filldraw[fill=white, draw=black, thick] (0,.8) circle (.55cm);
    \draw[->, >=stealth, thick] (0.02,1.35) -- (0.07,1.35);
    \node[] at (-2.5,1.5) {$u_1$};
    \node[] at (-.2,1.05) {\footnotesize{$S_1$}};
    \node[] at (-1.7,-.25) {\footnotesize{$S_2$}};
    \node[] at (1.3,-.25) {\footnotesize{$S_3$}};
    \node[] at (0,-2.5) {$F_{u_1}$};
    
    \filldraw[fill=red!9, draw=black, thick] (8,0) ellipse (3cm and 2cm); 
    \draw[->, >=stealth, thick] (8,2) arc (89.9:90.1:.01cm); 
    \filldraw[fill=white, draw=black, thick] (6.75,0) circle (.6cm); 
    \draw[->, >=stealth, thick] (6.77,0.6) -- (6.8,0.6);
    \filldraw[fill=white, draw=black, thick] (9.25,0) circle (.6cm);
    \draw[->, >=stealth, thick] (9.27,0.6) -- (9.3,0.6);
    \node[] at (5.5,1.5) {$u_2$};
    \node[] at (6.55,.25) {\footnotesize{$S_1$}};
    \node[] at (9.05,.25) {\footnotesize{$S_2$}};

    \node[] at (8,-2.5) {$F_{u_2}$};
\end{tikzpicture}

%% file: alexander.tex
\subsection{An efficient algorithm for embedded complexes} \label{SEC:efficientalgorithm}

In this section, we prove~\Cref{THM:embeddedeasy}.
To construct an efficient algorithm, we rely on \emph{Alexander duality}. Recall that, for an embedded complex $K \subseteq \R^n$, Alexander duality gives an isomorphism $H_{n-1}(K) \cong \Tilde{H}^0(\R^n \setminus \|K\|)$ between $(n-1)$-dimensional homology and $0$-dimensional (reduced) singular cohomology. For a subcomplex $L \subseteq K$, a class $\gamma \in H_{n-1}(K)$ lies in the image of $H_{n-1}(L) \to H_{n-1}(K)$ if and only if its Alexander dual lies in the image of the corresponding map $\Tilde{H}^0(\R^n \setminus \|L\|) \to \Tilde{H}^0(\R^n \setminus \|K\|)$. The latter map can be understood via an extension of the \emph{dual graph} of $K$.

\begin{definition}[Dual graph]
    Let $K$ be a simplicial complex. We denote by $\dualg{K}$ the \emph{dual graph} of~$K$, which has a vertex for each $n$-simplex of~$K$, and which has an edge between each pair of vertices whose corresponding simplices share an $(n-1)$-dimensional face.
\end{definition}

\begin{definition}[Extended dual graph]
For an embedded simplicial complex $K$, the extended dual graph $\dualg{K}^*$ of $K$ has one vertex for each $n$-simplex of $K$ and one vertex for each component of $\R^n \setminus \|K\|$. For each vertex $v$, we denote by $R_v \subseteq \R^n$ the interior of either the simplex or the connected component of $\R^n \setminus\|K\|$ to which $v$ corresponds. For each $(n-1)$-simplex $\tau$ of $K$, $\dualg{K}^*$ has an edge $(v,w)$ between the vertices for which $\tau \subseteq \overline{R_v} \cap \overline{R_w}$ (possibly allowing loops and multi edges). We denote the set of vertices corresponding to connected components of $\R^n \setminus \|\simp\| $ by $V_{\R^n \setminus K}$, and the vertex corresponding to the unbounded component by $v_\infty$. 
\end{definition}

\begin{figure}[h]
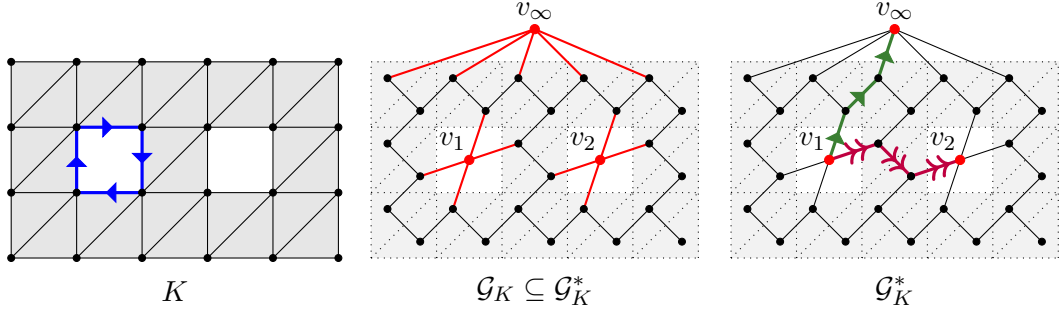

    \centering
    \include{tikz/extendeddual}
    \vspace{-1cm}
    \caption{Left: The embedded simplicial complex $K$ of \Cref{FIG:CUTEXAMPLE} with a class $\gamma \in H_1(K)$ (blue, single arrows).    
    Center: The (extended) dual graph of $K$. The vertices $V_{\R^n \setminus K} = \{ v_1, v_2, v_\infty\}$ and edges added to obtain $\dualg{K}^*$ from $\dualg{K}$ are in red.    
    Only some of the edges incident to $v_\infty$ are drawn. Right: A shortest path $v_1 \rightarrow v_\infty$ (green, single arrows) and a shortest path $v_1 \rightarrow v_2$ (purple, double arrows) in $\dualg{K}^*$. These correspond to the (minimum) $1$-cuts for $\gamma$ depicted in~\Cref{FIG:CUTEXAMPLE}, see~\Cref{PROP:cut_alex_dual_complete}. The paths are directed for visual clarity only.
    }
    \label{FIG:dualgraph}
\end{figure}

The outline for the rest of this section is as follows.
First, we show that one can associate a subgraph $\dualg{C}$ of $\dualg{K}^*$ to any $C \subseteq K^{(n-1)}$ so that $H_{n-1}(K - C) \cong \Tilde{H}^0(\dualg{C})$ (and this association is functorial). In case $C =\emptyset$, $\mathcal{G}_C =\mathcal{G}_K$ and this comes down to Alexander duality, and we see that any $\gamma \in H_{n-1}(K)$ can be written as a sum of homology classes corresponding to vertices in $\vcomp{K}$. We then show that whether $C$ is a homology cut for $\gamma$ depends on the existence of paths in $\dualg{C}$ between specific vertices in $\vcomp{K}$. This allows us to reduce~\eqref{EQ:MHC} to a (number of) shortest path problems in $\dualg{K}^*$. See~\Cref{FIG:dualgraph} for an illustration.

\begin{lemma}\label{LEM:Alexander Duality}
    Let $K \subseteq \R^n$ be an embedded simplicial complex, and let $C \subseteq K^{(n-1)}$. Denote by $\dualg{C} \subseteq \dualg{K}^*$ the smallest subgraph of $\dualg{K}^*$ containing both $\vcomp{K}$ and the dual edges corresponding to the simplices in $C$. Then there is an isomorphism $H_{n-1}(K - C) \cong \Tilde{H}^0(\dualg{C})$. Moreover, if $C \subseteq C'$, then the following diagram commutes:
    \[
    \begin{tikzcd}
    H_{n-1}(K - C') \arrow[r, "\cong "] \arrow[d, " "'] & \Tilde{H}^0(\dualg{C'})\arrow[d, " "] \\
    H_{n-1}(K - C) \arrow[r, "\cong"] & \Tilde{H}^0(\dualg{C})
    \end{tikzcd}
    \]
\end{lemma}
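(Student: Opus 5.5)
The plan is to apply Alexander duality to the subcomplex $K - C$ and then identify the complement $\R^n \setminus \|K - C\|$, up to homotopy, with the graph $\dualg{C}$.

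\textbf{Step 1: duality.} Since $K - C$ is itself embedded in $\R^n$, Alexander duality gives $H_{n-1}(K-C) \cong \Tilde{H}^0(\R^n \setminus \|K - C\|)$. This isomorphism is natural with respect to inclusions of compact subsets. So for $C \subseteq C'$ we have $\|K - C'\| \subseteq \|K - C\|$, and the complement inclusion $\R^n\setminus\|K-C\| \hookrightarrow \R^n \setminus \|K-C'\|$ makes the duality square commute. It therefore suffices to produce isomorphisms $\Tilde{H}^0(\R^n\setminus\|K-C\|)\cong \Tilde{H}^0(\dualg{C})$ that are compatible with the maps induced by $\dualg{C}\subseteq\dualg{C'}$ and by the complement inclusion.

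\textbf{Step 2: describe the complement.} The set $K - C$ is obtained from $K$ by deleting every $n$-simplex that has a facet in $C$, together with the simplices of $C$ themselves; faces of dimension $\le n-2$ survive. So $\R^n\setminus\|K-C\|$ is the union of three kinds of pieces:
\begin{itemize}
\item the components $R_v$, $v\in\vcomp{K}$;
\item the open interiors of the removed $n$-simplices, which are exactly the vertices of $\dualg{C}$ that are incident to a dual edge of $C$;
\item the relative interiors of the removed $(n-1)$-simplices.
\end{itemize}
These are glued along those relative interiors. Adding the interior of $\tau\in C$ joins exactly the two regions $R_v,R_w$ with $\tau\subseteq \overline{R_v}\cap\overline{R_w}$, which is the dual edge of $\tau$.

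\textbf{Step 3: components.} Each piece $R_v$ is connected, and adding a removed facet's interior connects its two sides, because points near it on either side can be joined through it. Removing the lower-dimensional faces does not further disconnect anything: any path can be pushed off the codimension $\ge 2$ skeleton, and that skeleton is disjoint from the complement anyway. Hence $\pi_0(\R^n\setminus\|K-C\|)\cong\pi_0(\dualg{C})$, induced by sending each $R_v$ to $v$. Reduced $H^0$ depends only on $\pi_0$, so this gives the isomorphism.

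\textbf{Step 4: naturality.} For $C\subseteq C'$ the inclusion of complements maps each region $R_v$ into itself. So on $\pi_0$ it agrees with the map induced by $\dualg{C}\subseteq\dualg{C'}$, and the diagram commutes after composing with Step 1.

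The main obstacle is the rigorous connectivity argument in Step 3. One must check that two regions in the complement are connected only through removed $(n-1)$-faces and not through lower-dimensional faces. This holds because every point of the complement lies in some $R_v$ or in a removed open facet, and open sets adjacent across a facet are joined only via that facet. I would handle it with small neighbourhoods of points in facet interiors, which are half-balls in the two adjacent regions.
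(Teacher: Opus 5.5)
Your proposal is correct and follows essentially the same route as the paper: Alexander duality for $K - C$ (natural in inclusions of compact sets), followed by an identification of $\pi_0(\R^n\setminus\|K - C\|)$ with $\pi_0(\dualg{C})$ that is compatible with $\dualg{C}\subseteq\dualg{C'}$. The only difference is presentational. The paper realizes this bijection through a piecewise linear embedding $f_C = f|_{\dualg{C}}\colon \dualg{C}\hookrightarrow \R^n\setminus\|K - C\|$, which makes the naturality square commute by construction, whereas you define the bijection directly on regions; your half-ball argument at points of removed open facets supplies the separation step that the paper only asserts ``readily follows''.
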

\begin{proof}
    Deferred to~\Cref{SEC:omittedproofs:Alexander}.
\end{proof}
Recall that $H^0(\R^n \setminus \|K\|)$ consists of functions $f:\R^n \setminus \|K\| \to \R $ that are constant on each connected component of $\R^n \setminus \|K\|$. We represent $\Tilde{H}^0(\R^n \setminus \|K\|)$ by locally constant functions that are $0$ on the unbounded connected component. Similarly, we think of $\Tilde{H}^0(\dualg{C})$ as functions that are constant on the components of $\dualg{C}$, and $0$ on the component containing $v_\infty$. Moreover, if $C \subseteq C'$, the map $\Tilde{H}^0(\dualg{C'}) \to \Tilde{H}^0(\dualg{C})$ is given by a restriction of functions. The following proposition allows us to relate cuts for specific elements of $H_{n-1}(K)$ to paths in the extended dual graph $\dualg{K}^*$ of $K$.

\begin{proposition}\label{PROP:Cut_Alex_dual_easy}
    Let $v \in \vcomp{K} \setminus \{v_\infty\}$. Consider the map $\mathbf{1}_v \in \Tilde{H}^0(\dualg{\emptyset}) \cong  \Tilde{H}^0(\R^n \setminus \|K\|)$. A set $C \subseteq K^{(n-1)}$ is a homological cut for the Alexander dual $\mathrm{AD}(\mathbf{1}_v) \in H_{n-1}(K)$ iff the set of edges in $\dualg{K}^*$ dual to the simplices of $C$ contains a path from $v$ to another vertex in $\vcomp{K}$. (Here, $\mathrm{AD}(\mathbf{1}_v)$ is meant to denote the Alexander dual of $\mathbf{1}_{R_v} \in \Tilde{H}^0(\R^n \setminus \|K\|)$.)
\end{proposition}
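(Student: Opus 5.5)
We reduce the statement to a purely graph-theoretic fact about $\dualg{C}$ via \Cref{LEM:Alexander Duality}. Applying that lemma to the pair $\emptyset \subseteq C$, we get a commutative square in which the left vertical map is $\ihomo_{K - C \hookrightarrow K}: H_{n-1}(K-C) \to H_{n-1}(K)$ and the right vertical map is the restriction $\Tilde{H}^0(\dualg{C}) \to \Tilde{H}^0(\dualg{\emptyset})$. The horizontal maps are isomorphisms, and for $C = \emptyset$ the bottom isomorphism is Alexander duality. Here $\dualg{\emptyset}$ is the graph on $\vcomp{K}$ with no edges, so $\Tilde{H}^0(\dualg{\emptyset})$ is the space of functions on $\vcomp{K}$ vanishing at $v_\infty$. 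Hence $C$ is a homological cut for $\mathrm{AD}(\mathbf{1}_v)$ if and only if $\mathbf{1}_v$ is \emph{not} in the image of the restriction map from $\Tilde{H}^0(\dualg{C})$.

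Next we identify this image. Elements of $\Tilde{H}^0(\dualg{C})$ are functions on the vertices of $\dualg{C}$ that are constant on its connected components and vanish on the component of $v_\infty$. Restricting to $\vcomp{K}$, the image consists exactly of those functions $g$ on $\vcomp{K}$ with $g(v_\infty)=0$ that are constant on each class of the equivalence relation ``joined by a path in $\dualg{C}$''. Surjectivity onto this set is clear: given such a $g$, extend it to each component of $\dualg{C}$ by its common value there, and by $0$ on components containing no vertex of $\vcomp{K}$.

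It follows that $\mathbf{1}_v$ lies in the image iff the component of $v$ in $\dualg{C}$ contains no other vertex of $\vcomp{K}$. If the component contains $v_\infty$, the value would have to be $0 \ne 1$. If it contains some other $w \in \vcomp{K}$, we would need $\mathbf{1}_v(w)=0$ to equal $\mathbf{1}_v(v)=1$. Both are impossible, while otherwise the extension above works. So $C$ is a cut iff $\dualg{C}$ contains a path from $v$ to another vertex of $\vcomp{K}$. Since the edges of $\dualg{C}$ are exactly the edges dual to the simplices of $C$, this is the statement.

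The main obstacle is the bookkeeping in the first step. We must make sure the bottom isomorphism of \Cref{LEM:Alexander Duality} for $C=\emptyset$ agrees with the Alexander duality map $\mathrm{AD}$ used to define the class, and that the identification $\Tilde{H}^0(\dualg{\emptyset}) \cong \Tilde{H}^0(\R^n\setminus\|K\|)$ sends $\mathbf{1}_v$ to $\mathbf{1}_{R_v}$. I would take these as part of the content of the lemma, or check them directly from its proof in the appendix. Once that is granted, the remaining argument is elementary graph connectivity.
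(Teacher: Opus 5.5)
Your proposal is correct and follows the paper's proof. Both use \Cref{LEM:Alexander Duality}, applied to $\emptyset \subseteq C$, to reduce the question to whether $\mathbf{1}_v$ extends to a function on $\dualg{C}$ that is constant on components and vanishes on the component of $v_\infty$; this holds iff no other vertex of $\vcomp{K}$ lies in the component of $v$. You just spell out the image of the restriction map and the identification $\mathbf{1}_{R_v} \leftrightarrow \mathbf{1}_v$ in more detail than the paper; that identification is immediate from the embedding $f_\emptyset$ sending $v$ into $R_v$ in the appendix.
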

\begin{proof}
    By Lemma \ref{LEM:Alexander Duality}, $\mathrm{AD}(\mathbf{1}_v)$ is in the image of $H_{n-1}(K \sminus C) \to H_{n-1}(K)$ if and only if $\mathbf{1}_v$ is in the image of $\Tilde{H}^0(\dualg{C}) \to  \Tilde{H}^0(\dualg{\emptyset})$. This is the case if and only if the function $\mathbf{1}_v$ extends to a function $\varphi \in \Tilde{H}^0(\dualg{C})$, which is possible if and only if $v$ is in a different component from any other vertex of $\vcomp{K}$ inside $\dualg{C}$. That means that $C$ is a homology cut for $\mathrm{AD}(\mathbf{1}_v)$ if and only if there is a path in $\dualg{C}$ from $v$ to some other vertex of $\vcomp{K}$, proving our claim. 
\end{proof}

We need the following generalization of the above proposition, that allows us to relate cuts for arbitrary elements of $H_1(K)$ to paths in $\dualg{K}^*$; see~\Cref{FIG:dualgraph} for an example.
\begin{proposition}\label{PROP:cut_alex_dual_complete}
    Let $\gamma = \sum_{v  \in \vcomp{K}  } \mathrm{AD}(\alpha_v \mathbf{1}_v) $ with $\alpha_{v_\infty} = 0$ be any element of $H_{n-1}(K)$. Then $C$ is a homology cut for $\gamma$ if and only if the set of edges dual to the simplices of $C$ contains a path between a pair of vertices $v,w \in \vcomp{K}$ for which $\alpha_v \neq \alpha_w$. 
\end{proposition}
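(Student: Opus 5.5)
We follow the proof of \Cref{PROP:Cut_Alex_dual_easy}, replacing the indicator function $\mathbf{1}_v$ by the locally constant function $\alpha := \sum_{v \in \vcomp{K}} \alpha_v \mathbf{1}_v \in \Tilde{H}^0(\dualg{\emptyset})$. Since the Alexander duality isomorphism is linear, $\gamma = \mathrm{AD}(\alpha)$.

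\textbf{Step 1: reduce to an extension problem.} By \Cref{LEM:Alexander Duality}, applied to $\emptyset \subseteq C$, the inclusion-induced map $H_{n-1}(K \sminus C) \to H_{n-1}(K)$ corresponds to the restriction map $\Tilde{H}^0(\dualg{C}) \to \Tilde{H}^0(\dualg{\emptyset})$. The commutative square then shows that $\gamma$ lies in the image of the first map if and only if $\alpha$ lies in the image of the second. Note that $\dualg{\emptyset}$ is just the discrete graph on $\vcomp{K}$. So $\alpha$ is in the image precisely when there is a function $\varphi$ on the vertices of $\dualg{C}$ such that
\begin{itemize}
\item $\varphi$ is constant on each connected component of $\dualg{C}$,
\item $\varphi$ vanishes on the component containing $v_\infty$,
\item $\varphi(v) = \alpha_v$ for every $v \in \vcomp{K}$.
\end{itemize}

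\textbf{Step 2: characterize when the extension exists.} We claim such a $\varphi$ exists if and only if $\alpha$ is constant on $\vcomp{K} \cap \Gamma$ for every component $\Gamma$ of $\dualg{C}$.

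Necessity is immediate, since $\varphi$ is constant on components. For sufficiency:
\begin{itemize}
\item On a component meeting $\vcomp{K}$, set $\varphi$ equal to the common value of $\alpha$ there.
\item On components not meeting $\vcomp{K}$, set $\varphi = 0$.
\end{itemize}
The only point to check is the normalization at $v_\infty$. The component containing $v_\infty$ gets the value $\alpha_{v_\infty} = 0$, so the vanishing condition holds; this is where the hypothesis $\alpha_{v_\infty}=0$ is used.

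\textbf{Step 3: translate into paths.} Two vertices of $\vcomp{K}$ lie in the same component of $\dualg{C}$ exactly when there is a path between them in $\dualg{C}$. The edges of $\dualg{C}$ are exactly the duals of the simplices in $C$, so such a path uses only edges dual to simplices of $C$. Hence $\alpha$ fails to be constant on some component exactly when there is a path, using only edges dual to $C$, between some $v, w \in \vcomp{K}$ with $\alpha_v \neq \alpha_w$. Combining Steps 1–3, this is the condition for $C$ to be a homological cut for $\gamma$.

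\textbf{Main obstacle.} The delicate step is Step 1: one must make sure that the identification $H_{n-1}(K) \cong \Tilde{H}^0(\dualg{\emptyset}) \cong \Tilde{H}^0(\R^n\setminus\|K\|)$ from \Cref{LEM:Alexander Duality} sends $\mathrm{AD}(\mathbf{1}_v)$ to $\mathbf{1}_v$, consistently with the reduced normalization. We take this from the lemma's construction as used in \Cref{PROP:Cut_Alex_dual_easy}. Once this is in place, the remaining steps are purely combinatorial.
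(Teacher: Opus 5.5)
Your proposal is correct and is essentially the paper's proof: use \Cref{LEM:Alexander Duality} to reduce to whether $\sum_v \alpha_v \mathbf{1}_v$ extends to a function in $\Tilde{H}^0(\dualg{C})$, which fails exactly when two vertices of $\vcomp{K}$ with different coefficients lie in a common component of $\dualg{C}$. Your Steps~2 and~3 just spell out what the paper leaves implicit. The normalization you flag is settled by the lemma's construction: $f_\emptyset$ places each vertex $v$ inside $R_v$, so $f_\emptyset^*\mathbf{1}_{R_v} = \mathbf{1}_v$.
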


\begin{proof}
    The proof is similar to that of Proposition~\ref{PROP:Cut_Alex_dual_easy}, with the following adjustment: The function $\psi = \sum_{v \in \vcomp{K}} \alpha_v \mathbf{1}_v \in \Tilde{H}^0(\dualg{\emptyset})$ extends to a function $\varphi \in \Tilde{H}^0(\dualg{C})$ iff no pair of vertices $v,w \in \vcomp{K}$ for which $\psi(v) = \alpha_v \neq \alpha_w = \psi(w)$ is in the same component of~$\dualg{C}$. 
\end{proof}

\begin{proof}[Proof of Theorem~\ref{THM:embeddedeasy}]
    To compute a minimum homological cut for a class $\gamma \in H_{n-1}(K)$ of an $n$-complex $K$ embedded in $\R^n$, we proceed in three steps: 
    \begin{enumerate}
        \item \textbf{Constructing the extended dual graph.} First, we construct~$\dualg{K}^*$. For this we use the Void Boundary Reconstruction algorithm~\cite[Section~4]{dual_graph_dey}, which outputs sets of oriented simplices $(\vec{\xi}_1,\ldots,\vec\xi_k)$ that constitute the boundaries of the components of $\R^n \setminus \|K\|$ (cf.~\Cref{FIG:dualgraph}) in time bounded by $O(|K|^2)$.
        We assume wlog that $\vec\xi_k$ constitutes the boundary of the unbounded component of $\R^n \setminus \|K\|$. The sum of oriented simplices in any of the $\vec\xi_i$ with $i<k$ bounding a component $R_i \subseteq \R^n \setminus \|K\|$, which we denote by $[\vec\xi_i]$, represents the Alexander dual to the function $f_i \in \Tilde{H}^0(\R^n \setminus \|K\|)$ that is constant $1$ on $R_i$, and $0$ everywhere else. 
        From the $(\vec{\xi}_1,\ldots,\vec\xi_k)$ we can construct $\dualg{K}^*$ in time linear in $k + |K|$. Each $\vec{\xi}_i$ corresponds to a vertex $v_i = v_{R_i} \in \vcomp{K} \subseteq V(\dualg{K}^*)$.
        \item \textbf{Expressing $\gamma$ in an appropriate basis.} We write $\gamma$ as $\gamma = \sum_{i=1} ^{k} \alpha_i[\vec\xi_i]$ with $\alpha_i \in \mathbb{F}$, $\alpha_k = 0$. This comes down to a change of basis, which can be done in matrix-multiplication time. The representation of $\gamma$ is precisely the representation as assumed in Proposition~\ref{PROP:cut_alex_dual_complete}, as $[\vec\xi_i]$ is the Alexander dual of~$\mathbf{1}_{v_i}$. 
        \item \textbf{Determining shortest paths.} Using breadth-first search, we can find the minimum distance between any two vertices $v_i,v_j \in \vcomp{K}$ inside $\dualg{K}^*$ for which $\alpha_i \neq \alpha_j$ in time bounded by $O(k \cdot |\dualg{K}^*|)$. Let $P$ denote a path in $\dualg{K}^*$ achieving this minimum. The set of $(n-1)$-simplices $C_P$ dual to the edges constituting $P$ form a cut for $\gamma$ by Proposition~\ref{PROP:cut_alex_dual_complete}. Since the edges dual to any cut $C$ for $\gamma$ must contain a path between vertices of $\vcomp{K}$ with differing coefficients, the cut $C_P$ is a minimum cut by minimality of $P$. \qedhere
    \end{enumerate}
\end{proof}

%% file: tikz/extendeddual.tex
\begin{tikzpicture}[scale=0.87]
\tikzset{
  doublemidarrow/.style={
    decoration={
      markings,
      mark=at position 0.8 with {\arrow{>>}}
    },
    postaction={decorate}
  }
}

\begin{scope}
\foreach \x in {0,1,2,3,4,5} {
    \foreach \y in {0,1,2,3} {
        \coordinate (v\x\y) at (\x,\y);
    }
}

\draw (2.5, -0.5) node {$K$};

\fill[gray!20] (v00) -- (v50) -- (v53) -- (v03) -- (v00);
\fill[white] (v11) -- (v12) -- (v22) -- (v21) -- (v11);
\fill[white] (v31) -- (v32) -- (v42) -- (v41) -- (v31);

\foreach \y in {0,1,2,3} {
    \draw (v0\y) -- (v5\y);
}
\foreach \x in {0,1,2,3,4,5} {
    \draw (v\x0) -- (v\x3);
}

\draw (v00) -- (v11);
\draw (v01) -- (v12);
\draw (v02) -- (v13);

\draw (v10) -- (v21);
\draw (v12) -- (v23);

\draw (v20) -- (v31);
\draw (v21) -- (v32);
\draw (v22) -- (v33);

\draw (v30) -- (v41);
\draw (v32) -- (v43);

\draw (v40) -- (v51);
\draw (v41) -- (v52);
\draw (v42) -- (v53);

\draw[blue, sloped, allow upside down, very thick] (v11) -- node {\midarrow} (v12);
\draw[blue, sloped, allow upside down, very thick] (v12) -- node {\midarrow} (v22);
\draw[blue, sloped, allow upside down, very thick] (v22) -- node {\midarrow} (v21);
\draw[blue, sloped, allow upside down, very thick] (v21) -- node {\midarrow} (v11);

\foreach \x in {0,1,2,3,4,5} {
    \foreach \y in {0,1,2,3} {
        \filldraw (v\x\y) circle (1.5pt);
    }
}

\end{scope}











\begin{scope}[xshift=5.5cm]
\foreach \x in {0,1,2,3,4} {
        \foreach \y in {0,1,2} {
            \coordinate (BR\x\y) at (\x + 0.75,\y +0.25);
            \coordinate (TL\x\y) at (\x + 0.25,\y +0.75);
        }
    }

\draw (2.5, -0.5) node {$\dualg{K} \subseteq \dualg{K}^*$};

\foreach \x in {0,1,2,3,4,5} {
    \foreach \y in {0,1,2,3} {
        \coordinate (v\x\y) at (\x,\y);
    }
}

\fill[gray!10] (v00) -- (v50) -- (v53) -- (v03) -- (v00);
\fill[white] (v11) -- (v12) -- (v22) -- (v21) -- (v11);
\fill[white] (v31) -- (v32) -- (v42) -- (v41) -- (v31);

\foreach \y in {0,1,2,3} {
    \draw[dotted] (v0\y) -- (v5\y);
}
\foreach \x in {0,1,2,3,4,5} {
    \draw[dotted] (v\x0) -- (v\x3);
}

\draw[dotted] (v00) -- (v11);
\draw[dotted] (v01) -- (v12);
\draw[dotted] (v02) -- (v13);

\draw[dotted] (v10) -- (v21);
\draw[dotted] (v12) -- (v23);

\draw[dotted] (v20) -- (v31);
\draw[dotted] (v21) -- (v32);
\draw[dotted] (v22) -- (v33);

\draw[dotted] (v30) -- (v41);
\draw[dotted] (v32) -- (v43);

\draw[dotted] (v40) -- (v51);
\draw[dotted] (v41) -- (v52);
\draw[dotted] (v42) -- (v53);

    \coordinate (LH) at (1.5, 1.5);
    \coordinate (RH) at (3.5, 1.5);
    \coordinate (infH) at (2.5, 3.5);

    \draw[red, thick] (infH) -- (TL02);
    \draw[red, thick] (infH) -- (TL12);    
    \draw[red, thick] (infH) -- (TL22);    
    \draw[red, thick] (infH) -- (TL32);  
    \draw[red, thick] (infH) -- (TL42);

    \draw[red, thick] (LH) -- (BR12);
    \draw[red, thick] (LH) -- (BR01);
    \draw[red, thick] (LH) -- (TL21);
    \draw[red, thick] (LH) -- (TL10);

    \draw[red, thick] (RH) -- (BR32);
    \draw[red, thick] (RH) -- (BR21);
    \draw[red, thick] (RH) -- (TL41);
    \draw[red, thick] (RH) -- (TL30);

    \filldraw[red] (LH) circle (2pt);
    \filldraw[red] (RH) circle (2pt);
    \filldraw[red] (infH) circle (2pt);

    \draw[above] (infH) node {$v_\infty$};
    \draw[above left] (LH) +(0.1,0) node {$v_1$};
    \draw[above left] (RH) +(0.1,0) node {$v_2$};

\foreach \x in {0,1,2,3,4} {
    \foreach \y in {0,2} {
        \filldraw (BR\x\y) circle (1.5pt);
        \filldraw (TL\x\y) circle (1.5pt);

        \draw (BR\x\y) -- (TL\x\y);
    }
}
\foreach \x in {0,2,4} {
    \foreach \y in {1} {
        \filldraw (BR\x\y) circle (1.5pt);
        \filldraw (TL\x\y) circle (1.5pt);
        \draw (BR\x\y) -- (TL\x\y);
    }
}
{
\draw (BR00) -- (TL10);
\draw (BR10) -- (TL20);
\draw (BR20) -- (TL30);
\draw (BR30) -- (TL40);

\draw (BR02) -- (TL12);
\draw (BR12) -- (TL22);
\draw (BR22) -- (TL32);
\draw (BR32) -- (TL42);

\draw (BR02) -- (TL01);
\draw (BR01) -- (TL00);

\draw (BR22) -- (TL21);
\draw (BR21) -- (TL20);

\draw (BR42) -- (TL41);
\draw (BR41) -- (TL40);
}

\end{scope}

\begin{scope}[xshift=11cm]
\foreach \x in {0,1,2,3,4} {
        \foreach \y in {0,1,2} {
            \coordinate (BR\x\y) at (\x + 0.75,\y +0.25);
            \coordinate (TL\x\y) at (\x + 0.25,\y +0.75);
        }
    }

\draw (2.5, -0.5) node {$\dualg{K}^*$};

\foreach \x in {0,1,2,3,4,5} {
    \foreach \y in {0,1,2,3} {
        \coordinate (v\x\y) at (\x,\y);
    }
}

\fill[gray!10] (v00) -- (v50) -- (v53) -- (v03) -- (v00);
\fill[white] (v11) -- (v12) -- (v22) -- (v21) -- (v11);
\fill[white] (v31) -- (v32) -- (v42) -- (v41) -- (v31);

\foreach \y in {0,1,2,3} {
    \draw[dotted] (v0\y) -- (v5\y);
}
\foreach \x in {0,1,2,3,4,5} {
    \draw[dotted] (v\x0) -- (v\x3);
}

\draw[dotted] (v00) -- (v11);
\draw[dotted] (v01) -- (v12);
\draw[dotted] (v02) -- (v13);

\draw[dotted] (v10) -- (v21);
\draw[dotted] (v12) -- (v23);

\draw[dotted] (v20) -- (v31);
\draw[dotted] (v21) -- (v32);
\draw[dotted] (v22) -- (v33);

\draw[dotted] (v30) -- (v41);
\draw[dotted] (v32) -- (v43);

\draw[dotted] (v40) -- (v51);
\draw[dotted] (v41) -- (v52);
\draw[dotted] (v42) -- (v53);

    \coordinate (LH) at (1.5, 1.5);
    \coordinate (RH) at (3.5, 1.5);
    \coordinate (infH) at (2.5, 3.5);

    \draw (infH) -- (TL02);
    \draw (infH) -- (TL12);    
    \draw (infH) -- (TL22);    
    \draw (infH) -- (TL32);  
    \draw (infH) -- (TL42);

    \draw (LH) -- (BR12);
    \draw (LH) -- (BR01);
    \draw (LH) -- (TL21);
    \draw (LH) -- (TL10);

    \draw (RH) -- (BR32);
    \draw (RH) -- (BR21);
    \draw (RH) -- (TL41);
    \draw (RH) -- (TL30);

    \draw[above] (infH) node {$v_\infty$};
    \draw[above left] (LH) +(0.1,0) node {$v_1$};
    \draw[above left] (RH) +(0.1,0) node {$v_2$};

{
\draw (BR00) -- (TL10);
\draw (BR10) -- (TL20);
\draw (BR20) -- (TL30);
\draw (BR30) -- (TL40);

\draw (BR02) -- (TL12);
\draw (BR12) -- (TL22);
\draw (BR22) -- (TL32);
\draw (BR32) -- (TL42);

\draw (BR02) -- (TL01);
\draw (BR01) -- (TL00);

\draw (BR22) -- (TL21);
\draw (BR21) -- (TL20);

\draw (BR42) -- (TL41);
\draw (BR41) -- (TL40);
}

    \draw[OliveGreen, sloped, allow upside down, very thick] (LH) -- node {\midarrow} (BR12);
    \draw[OliveGreen, sloped, allow upside down, very thick] (BR12) -- node {\midarrow} (TL22);
    \draw[OliveGreen, sloped, allow upside down, very thick] (TL22) -- node {\midarrow} (infH);

    \draw[purple, doublemidarrow, very thick] (LH) -- (TL21);

    \draw[purple, doublemidarrow, very thick] (BR21) -- (RH);

    \filldraw[red] (LH) circle (2pt);
    \filldraw[red] (RH) circle (2pt);
    \filldraw[red] (infH) circle (2pt);

\foreach \x in {0,1,2,3,4} {
    \foreach \y in {0,2} {
        \filldraw (BR\x\y) circle (1.5pt);
        \filldraw (TL\x\y) circle (1.5pt);

        \draw (BR\x\y) -- (TL\x\y);
    }
}
\foreach \x in {0,2,4} {
    \foreach \y in {1} {
        \filldraw (BR\x\y) circle (1.5pt);
        \filldraw (TL\x\y) circle (1.5pt);
        \draw (BR\x\y) -- (TL\x\y);
    }
}

    \draw[purple, doublemidarrow, very thick] (TL21) -- (BR21);
    \filldraw (TL21) circle (1.5pt);
    \filldraw (BR21) circle (1.5pt);

\end{scope}

\end{tikzpicture}

%% file: HausdorffH.tex
\label{SEC:APPLICATION:HH}
Let $X = (X, d)$ be a (finite) metric space. 
For a subset $\sigma \subseteq X$, write $\mr(\sigma)$ for its diameter, i.e., the largest pairwise distance between points in $\sigma$. The Rips filtration of $X$ is typically defined as the $\R$-indexed family of simplicial complexes $\VR(X) := (\{ \sigma \subseteq X : \mr(\sigma) \leq r\})_{r \in \R}$. If we choose a total order $\preceq$ on the points in $X$, we can transform $\VR(X)$ into a discrete, simplex-wise filtration. Namely, we insert simplices one-by-one according to their diameter, breaking ties first by simplex dimension, and then by the lexicographic order on subsets of~$X$ induced by $\preceq$. We denote the resulting \emph{simplex-wise Rips filtration} by $\OVR(X)$. The \emph{length} of a bar $B = [\sigma_B, \tau_B)$ in the barcode of $\PH_p(\OVR(X))$ is $\ell(B) := \mr(\tau_B) - \mr(\sigma_B)$.

We consider the following heuristic to determine adversarial robustness of bars in the barcode of $\PH_p(\OVR(X))$ based on their length. We denote the Hausdorff distance between $A, B \subseteq X$ by $d_H(A, B) := \max \left\{ \sup_{a\in A}d(a,B), \sup_{b \in B}d(b,A) \right\}$.
\begin{definition}[Hausdorff heuristic] Let $X$ be a finite metric space. For $k \in \N$, we define 
    \begin{equation}
        \hhy := \max_{A \subseteq X, \, |A| \leq k} d_H(X \setminus A,X).
        \label{EQ:heuristic}
    \end{equation}
\end{definition}
The Hausdorff heuristic has the following two key properties. 
\begin{theorem}~\label{THM:main:HH}
    Let $p \geq 0$, and let $B$ be a bar in the barcode of $\PH_p(\OVR(X))$.
    If $B$ has length $\ell(B) \geq \hhy$, then $B$ is $k$-adversarially robust in degree $0$.
\end{theorem}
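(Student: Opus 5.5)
The plan is to reduce \Cref{THM:main:HH} to the induced matching theorem (\Cref{THM:inducedmatching}), applied to the morphism induced by the inclusion of Rips filtrations. Fix $A \subseteq X$ with $|A| \leq k$, write $Y := X \setminus A$ and $\epsilon := d_H(Y, X) \leq \hhy$. Because the tie-breaking in $\OVR$ is by diameter, then dimension, then the lexicographic order induced by $\preceq$, the filtration $\OVR(X) - A$ coincides with $\OVR(Y)$ as a sequence of complexes, up to repeated (empty) steps. In the language of the Preliminaries, reparametrizing $\OVR(X)$ by the diameter function $f(i) := \mr(\text{simplex inserted at step } i)$ turns $\PH_p(\OVR(X), f)$ into $\PH_p(\VR(X))$, and turns $\PH_p(\OVR(X) - A, f)$ into $\PH_p(\VR(Y))$. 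The matching $\im_{\OVR(X)-A \hookrightarrow \OVR(X)}$ is determined by the barcodes of the domain, codomain and image. These barcodes transform bar-by-bar under $f$, with each simplex-wise bar $[i,j)$ sent to $[f(i),f(j))$. Hence $B$ lies in the image of the simplex-wise matching iff the reparametrized bar $[\mr(\sigma_B),\mr(\tau_B))$ is matched by the matching induced by $\PH_p(\VR(Y)) \to \PH_p(\VR(X))$. One subtlety is that several simplex-wise bars may share a reparametrized endpoint. I would handle this by keeping the discrete indices and only measuring lengths through $f$, so that ties are never merged.

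The second step bounds the cokernel of $\iota^* : \PH_p(\VR(Y)) \to \PH_p(\VR(X))$. Choose a nearest-point map $g : X \to Y$, so that $d(x, g(x)) \leq \epsilon$ and $g|_Y = \mathrm{id}$. For a simplex $\sigma \in \VR_r(X)$, the set $\sigma \cup g(\sigma)$ has diameter at most $r + 2\epsilon$. So at scale $r+2\epsilon$ the simplicial maps $\mathrm{id}$ and $\iota \circ g$ from $\VR_r(X)$ to $\VR_{r+2\epsilon}(X)$ are contiguous. Consequently every class of $H_p(\VR_r(X))$ becomes, after shifting, a class in the image of $H_p(\VR_{r+2\epsilon}(Y))$. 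This shows the cokernel is $2\epsilon$-trivial.

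Symmetrically, $g$ is a left inverse of $\iota$ up to shift, so the kernel is $2\epsilon$-trivial as well. By \Cref{THM:inducedmatching}, every bar of $\VR(X)$ of length exceeding $2\epsilon$ is then matched.

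The main obstacle is closing the gap between this naive bound $2\epsilon$ and the claimed threshold $\ell(B) \geq \hhy$, including the non-strict inequality. I would first try to sharpen the cokernel estimate to $\epsilon$ by exploiting that only points of $A$ are moved and that $g$ fixes $Y$. For a simplex $\sigma$, pairs of points both in $Y$ are unchanged, and a pair with one point moved satisfies $d(x, g(y)) \leq r + \epsilon$. The only bad pairs are those with two moved points, and I would try to handle them by an argument directly on the single death simplex $\tau_B$, using the cut criterion of \Cref{THM:main:LOC} rather than the full interleaving. Concretely, I would show that $[\partial \tau_B]$ is homologous in $\predeath{B}$ to the cycle $g(\partial\tau_B)$ supported in $\predeath{B} - A$. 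Here $\predeath{B}$ contains every simplex of diameter less than $\mr(\tau_B)$, the relevant prism simplices have diameter at most $\mr(\sigma_B)+\epsilon$ (or close to it), and the length hypothesis places these below $\mr(\tau_B)$.

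For the boundary case $\ell(B) = \hhy$, I would check that the tie-breaking (dimension first, then lexicographic) puts the needed simplices before $\tau_B$. If this sharpening fails, the fallback is to prove the statement with threshold $2\hhy$ and track where the factor $2$ enters.
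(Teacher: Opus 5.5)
The gap is the one you flag yourself, and it cannot be closed. Your argument proves robustness only for $\ell(B) > 2\hhy$. The statement as written, with threshold $\hhy$ and a non-strict inequality, is false, so neither the ``sharpening to $\epsilon$'' nor the boundary-case check can succeed.

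\emph{The non-strict inequality fails already in degree $0$.} Take $X=\{0,1,2\}\subset\R$ and $k=1$, so $\hh{X,1}=1$. The $H_0$-bar born at vertex $2$ and killed by the edge $\{1,2\}$ has length $1$. But vertex $2$ is isolated in $K_B$, so $A=\{2\}$ is a homological cut for $[\partial\tau_B]$.

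\emph{The factor $2$ is genuinely needed, precisely because of your ``bad pairs''.} For two adjacent moved points, $d(g(x),g(y))$ can equal $d(x,y)+2\epsilon$, so the prism simplices reach diameter $\mr(\sigma_B)+2\epsilon$, not $\mr(\sigma_B)+\epsilon$. Concretely, use the arc-length metric on a circle of circumference $6.75$:
\begin{itemize}
\item Let $X$ be the points at $0,0.5,1.5,2.3,3.0,3.8,4.5,5.2,5.75$, perturbed slightly so that all distances are distinct, and let $A=\{0,0.5\}$.
\item Every point has its second nearest neighbour within distance $1$, so $\hh{X,2}\approx 1$.
\item The class winding once around the circle gives a bar $B$ with $\mr(\sigma_B)\approx 1$, coming from the gaps $5.75\to 0$ and $0.5\to 1.5$.
\item Its death has $\mr(\tau_B)\approx 2.3$: the smallest triangle whose short arcs cover the circle is $\{0,2.3,4.5\}$. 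Hence $\ell(B)\approx 1.3>\hh{X,2}$.
\item Every simplex of that diameter contains the edge $\{0,2.3\}$, which touches $A$. So $K_B-A$ is the Rips complex of $X\setminus A$ at a scale just below $\mr(\tau_B)$.
\item $X\setminus A$ lies in the arc $[1.5,5.75]$, whose complement has length $2.5$. At these scales this is therefore the Rips complex of points on a line, which is contractible.
\end{itemize}
Thus $H_1(K_B-A)=0$, and by Proposition~\ref{THM:main:LOC} $B$ is not $2$-robust. Note that $g(0)=5.75$ and $g(0.5)=1.5$ are at distance $2.5=0.5+2\cdot 1$, which is exactly your bad pair.

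\emph{How this relates to the paper's proof.} The paper closes your gap by asserting a factorization $H_p(\VR_r(X))\to H_p(\VR_{r+\delta}(X\setminus A))$, where $\delta=d_H(X\setminus A,X)$. It obtains this from the $\ell^\infty$-embedding, the persistent nerve lemma and an inclusion of offsets. But in $\ell^\infty$ the Rips complex at diameter $r$ is the \v{C}ech complex at radius $r/2$. So a $\delta$-shift of offsets is a $2\delta$-shift in the diameter parametrization used to define $\ell(B)$. That argument therefore gives exactly your contiguity bound. The example above shows the $\delta$-shifted map cannot exist: take $r$ just above $\mr(\sigma_B)$. Moreover, the induced matching theorem only ever gives a strict inequality, and that is what the paper's proof actually concludes.

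\emph{What can be proved.} Your fallback is the correct statement: $\ell(B)>2\hhy$ implies $k$-robustness. Your step-4 idea proves it directly, without smoothing or induced matchings:
\begin{itemize}
\item The reduced boundary column of $\tau_B$ is a cycle $z$ supported on simplices of diameter at most $\mr(\sigma_B)$, and it is homologous to $\partial\tau_B$ in $K_B$.
\item The identity and $g$ are contiguous via the simplices $\sigma\cup g(\sigma)$, whose diameter is at most $\mr(\sigma_B)+2d_H(X\setminus A,X)<\mr(\tau_B)$.
\item All these simplices therefore precede $\tau_B$ and lie in $K_B$. Hence $[\partial\tau_B]=[g_{\#}z]$ in $H_p(K_B)$, with $g_{\#}z$ supported in $K_B-A$.
\end{itemize}
This route also makes your step-1 ``iff'' between simplex-wise and reparametrized matchings unnecessary; that step is delicate when several bars share reparametrized endpoints.
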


\begin{proposition} \label{PROP:HH:efficient}
    The parameter $\hhy$ can be computed in time $O(|X|^2 \log |X|)$.
\end{proposition}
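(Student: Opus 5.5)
The plan is to reduce $\hhy$ to a $k$-th nearest neighbour computation. After that reduction the running time is a matter of sorting rows of the distance matrix. Throughout I assume $1 \leq k < |X|$; the case $k=0$ gives $\hhy = 0$ trivially, and $A = X$ is excluded since $d_H(\emptyset, X)$ is not finite.

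\textbf{Step 1: simplify the Hausdorff distance.} Since $X \setminus A \subseteq X$, every $b \in X \setminus A$ satisfies $d(b, X) = 0$. Points $a \notin A$ also satisfy $d(a, X\setminus A) = 0$. Hence
\[
  d_H(X\setminus A, X) = \max_{a \in A} d(a, X \setminus A),
\]
and therefore $\hhy = \max_{a \in X} \max\{ d(a, X\setminus A) : a \in A,\ |A| \leq k\}$.

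\textbf{Step 2: the inner maximum for fixed $a$.} For $a \in X$, order the other points $x_1, \ldots, x_{|X|-1}$ by nondecreasing distance to $a$, and set $\delta_k(a) := d(a, x_k)$, the distance to the $k$-th nearest other point.

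For the upper bound, take any $A \ni a$ with $|A| \leq k$. Then $A$ contains at most $k-1$ points other than $a$, so some $x_j$ with $j \leq k$ lies in $X \setminus A$. This gives $d(a, X\setminus A) \leq \delta_k(a)$.

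For the matching lower bound, take $A = \{a, x_1, \ldots, x_{k-1}\}$. Every remaining point is some $x_j$ with $j \geq k$, so $d(a, X \setminus A) = \delta_k(a)$. Consequently $\hhy = \max_{a \in X} \delta_k(a)$.

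\textbf{Step 3: the algorithm.} First compute all pairwise distances in $O(|X|^2)$ time. For each $a$, sort its row of $|X|-1$ distances and read off the $k$-th entry, at cost $O(|X|\log|X|)$ per point. Finally take the maximum over $a$. The total is $O(|X|^2 \log |X|)$. A linear-time selection algorithm would even give $O(|X|^2)$, but sorting suffices for the stated bound.

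The only step needing care is Step 2, specifically the pigeonhole argument for the upper bound and handling ties in the ordering. Ties are harmless because $\delta_k(a)$ is well defined regardless of how ties are broken.
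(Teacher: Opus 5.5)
Your proof is correct and follows the paper's route. The paper likewise rewrites $d_H(X\setminus A, X)$ as $\max_{a\in A} d(a, X\setminus A)$, proves $\hhy = \max_{x\in X} d(x,\nu_k(x))$ (its structural lemma, with the same witness set $\{x_0,\nu_1(x_0),\ldots,\nu_{k-1}(x_0)\}$ for the lower bound), and then sorts each row of the distance matrix. The only difference is the upper bound: the paper argues by contradiction with an exchange step on a maximizing set $A_0$, whereas you swap the two maxima and use a direct pigeonhole count, which is shorter and handles ties more cleanly.
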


\begin{proof}[Proof sketch of~\Cref{THM:main:HH}.] 
Consider the $\R$-indexed Rips filtration $\VR(X)$.
Let ${A \subseteq X}$, and set $\delta =d_H(X \setminus A,X)$. For any $r \in \R$,
there is a map $\mathrm{H}_p(\VR_r(X)) \to \mathrm{H}_p(\VR_{r + \delta} (X \sminus A))$ making the following diagram commute: 
\begin{center}    
\begin{tikzcd}[row sep=large]
H_p(\VR_{r}(X \setminus A))\arrow[r] \arrow[d]
& H_p(\VR_{r+\delta}(X \setminus A)) \arrow[d] \\
H_p(\VR_r(X)) \arrow[r] \arrow[ur]
& H_p(\VR_{r+\delta}(X)) 
\end{tikzcd}
\end{center}
\noindent This can be shown similarly to how stability of Rips persistence is proved, e.g., as in~\cite{Chazal2009}.
It follows that the cokernel of $\mathrm{H}_p(\VR(X \setminus A)) \to \mathrm{H}_p(\VR(X))$ is $\delta$-trivial, which would allow us to apply the induced matching theorem (\Cref{THM:inducedmatching}).

To transport this idea to the simplex-wise Rips filtration $\OVR(X)$, we make use of an ``$\varepsilon$-smoothening''. That is, an $\R$-indexed simplex-wise filtration $\epsVR(X)$, which inserts simplices in the same order as $\OVR(X)$, and which is $\varepsilon$-interleaved with $\VR(X)$, i.e.,
\begin{equation*} 
    \epsVR_r(X) \subseteq \VR_r(X) \subseteq \epsVR_{r+\epsilon}(X) \quad \forall r \in \R.
\end{equation*}
Such a filtration exists for any $\epsilon > 0$ small enough. Importantly, the persistence pairings and induced matchings of $\OVR(X)$ and $\epsVR(X)$ agree. 
By the previous, the cokernel of 
$\mathrm{H}_p(\epsVR(X \setminus A)) \to \mathrm{H}_p(\epsVR(X))$ is  $\delta+2\varepsilon$-trivial. By \Cref{THM:inducedmatching}, every bar of length greater than $\delta+2\varepsilon$ in the barcode of $\epsVR(X)$ is in the image of the matching induced by $\epsVR(X \setminus A) \hookrightarrow \epsVR(X)$. 
Making $\varepsilon$ arbitrarily small, the theorem follows. See~\Cref{SEC:omittedproofs:hausdorff} for details. 
\end{proof}

\begin{proof}[Proof of~\Cref{PROP:HH:efficient}]
To compute $\hhy$ efficiently, we use the following structural lemma on the sets $A$ that attain the maximum in~\eqref{EQ:heuristic}; its proof is deferred to~\Cref{SEC:omittedproofs:hausdorff}.
\begin{restatable}{lemma}{HHstructure} \label{LEM:HHstructure}
For $i \in \N$ and $x \in X$, let $\nu_i(x)$ denote the $ith$ nearest neighbor of $x$. The set $A \subseteq X$ attaining the maximum in~\eqref{EQ:heuristic} is of the form $\{x\} \cup \{ \nu_i(x) : i=1,\ldots,k\}$, with $x \in X$.
In particular, we have
    $
    \hhy= \max_{x \in X} d(x,\nu_{k}(x)).
    $
\end{restatable}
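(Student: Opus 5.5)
Since $X \setminus A \subseteq X$, one of the two terms in the Hausdorff distance vanishes. Indeed, $\sup_{y \in X\setminus A} d(y, X) = 0$, so
\[
d_H(X \setminus A, X) \;=\; \max_{a \in X} d(a, X\setminus A) \;=\; \max_{a \in A} d(a, X \setminus A),
\]
because points of $X\setminus A$ contribute distance $0$. So $\hhy = \max_{|A|\le k}\max_{a\in A} d(a, X\setminus A)$. The plan is to swap the two maxima, fix the point $a = x$, and determine the best $A$ for that $x$. Throughout I assume $|X| > k$, so that $X \setminus A \neq \emptyset$.

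I fix the indexing convention as follows. Order $X$ by distance from $x$, breaking ties arbitrarily, and count $x$ itself as the first point of this list. Then $\nu_k(x)$ is the $(k+1)$-st point of the list, i.e.\ the $k$-th nearest point other than $x$. With this convention, the candidate set $\{x\}\cup\{\nu_i(x) : i < k\}$ consists of exactly $k$ points, namely the first $k$ entries of the list. I would state this convention explicitly, since the lemma's indexing must be read so that the candidate set satisfies $|A|\le k$.

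\textbf{Upper bound.} Let $A$ be any set with $|A|\le k$ and let $x\in A$. The first $k+1$ points in the list for $x$ are $x,\nu_1(x),\dots,\nu_k(x)$. Since $|A| \le k$, at least one of these $k+1$ points lies outside $A$. Every point in this initial segment is at distance at most $d(x,\nu_k(x))$ from $x$. Hence
\[
d(x, X\setminus A)\le d(x,\nu_k(x)).
\]
Taking the maximum over $x \in A$ and then over $A$ gives $\hhy \le \max_{x\in X} d(x,\nu_k(x))$.

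\textbf{Lower bound.} Fix $x$ and set $A_x=\{x,\nu_1(x),\dots,\nu_{k-1}(x)\}$, which has $|A_x| = k$. Every point of $X\setminus A_x$ appears at position $k+1$ or later in the list for $x$. Therefore $d(x,X\setminus A_x) = d(x,\nu_k(x))$, with the minimum attained at $\nu_k(x)$, and so $d_H(X\setminus A_x,X)\ge d(x,\nu_k(x))$. Choosing $x$ to maximize $d(x,\nu_k(x))$ shows that the upper bound is attained, and that it is attained by a set of the stated form.

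The only step needing care is the interaction of ties with the pigeonhole argument. It is harmless: the bound uses only that all $k+1$ initial points lie within distance $d(x,\nu_k(x))$ of $x$, which holds for any tie-breaking. The remaining point is to state the indexing convention clearly, so that the claimed maximizer has size at most $k$.
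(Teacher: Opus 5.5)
Your proof is correct. The lower bound is the same as the paper's. The paper also uses the $k$-element set $A_0=\{x_0,\nu_1(x_0),\dots,\nu_{k-1}(x_0)\}$, so your reading of the index range as $i<k$ agrees with what the paper actually proves, and you are right to flag the off-by-one in the statement.

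The upper bound is where you genuinely differ. The paper fixes an optimal $A_0$ (w.l.o.g.\ $|A_0|=k$) and supposes $d_H(X\setminus A_0,X)\neq d(a,\nu_k(a))$ for every $a\in A_0$. It then derives a contradiction by an exchange. For the critical point $a$ with nearest outside point $x$, it moves $x$ into $A_0$ and some far point $a'$ out, and claims this strictly increases $d(a,X\setminus A_0)$. You instead prove the pointwise bound $d(x,X\setminus A)\le d(x,\nu_k(x))$ for every $x$ and every $|A|\le k$, by pigeonhole on the $k+1$ points $x,\nu_1(x),\dots,\nu_k(x)$.

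Your route is shorter and needs neither an optimal set nor the reduction to $|A|=k$. It is also insensitive to ties. The paper's swap step relies on strict inequalities, e.g.\ that the second-nearest outside point $x'$ satisfies $d(a,x')>d(a,x)$. These fail when distances coincide, in which case a single swap need not increase the value.

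The paper's intermediate structural claim also follows immediately from your inequality. That claim is that an optimal $A$ has a critical point $a$ with $d_H(X\setminus A,X)=d(a,\nu_k(a))$; it follows by sandwiching $\hhy=d(a,X\setminus A)\le d(a,\nu_k(a))\le \hhy$.

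Both arguments establish the existence version of the lemma: some maximizer has the stated form. That is the right reading, since not every maximizer need have this form; sets with fewer than $k$ points can also be optimal.
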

For each $x \in X$, we can compute $\nu_k(x)$ by sorting the points in $X$ by their distance to $x$, which can be done in $O(|X| \log |X|)$. This immediately proves~\Cref{PROP:HH:efficient}.
\end{proof}

%% file: LP.tex
\label{SEC:LP}
Let $K$ be a simplicial complex.
In this section, we derive and analyze a natural linear programming relaxation for the minimum homological $1$-cut problem for classes $\gamma \in H_1(K; \R)$. Throughout this section, we refer to $1$-cuts as \emph{edge cuts}. First, we recall some basic facts about cohomology.

We denote by $C^p(K;\mathbb{R}):= \mathrm{Hom}(C_p(K;\mathbb{R}),\mathbb{R})$ the space of cochains, and denote the coboundary operator by $\partial^*_p$, which is defined via $\left(\partial^*\varphi \right)(\sigma) = \varphi(\partial \sigma)$. The quotient of cocycles by coboundaries defines the cohomology $H^p(K;\mathbb{R})$ of $K$. A cohomology class $[\varphi] \in H^p(K;\mathbb{R})$ can be evaluated at a class $[c] \in H_p(K;\mathbb{R})$ via $[\varphi]([c]) = \varphi(c) $. This evaluation does not depend on the choice of representatives. Another way to phrase this is that a cochain $\varphi  \in C^p(K;\mathbb{R})$, when considered as a function on cycles $\varphi : Z_p(K;\mathbb{R}) \to \mathbb{R}$ is constant  on homology classes if and only if it satisfies the cocycle condition, i.e. $\partial^* \varphi = 0$. By a slight abuse of notation, we denote by $\mathrm{Supp}(\varphi)$ the \emph{simplices} (not chains) $\sigma$ for which $\varphi(\sigma) \neq 0$. 

\begin{lemma} \label{LEM:Cut supports cocycle}
	Suppose C is a minimal edge cut for  $\gamma=[c] \in H_1(K;\R)$. Then there exists a cocycle $\varphi \in C^1(K;\R)$ with $\mathrm{Supp}(\varphi) = C$ and $\varphi(c) = 1$.
\end{lemma}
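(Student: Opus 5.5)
The plan is to produce the cocycle $\varphi$ by dualizing the statement ``$\gamma \notin \mathrm{Im}(\ihomo_{K \sminus C \hookrightarrow K})$''. We then use minimality of $C$ to pin down its support exactly. Throughout, ``minimal'' is read as inclusion-minimal; a minimum cut is in particular inclusion-minimal.

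\textbf{Step 1 (separating functional).} Let $W := \mathrm{Im}(\ihomo_{K \sminus C \hookrightarrow K}) \subseteq H_1(K;\R)$. Since $C$ is a cut, $\gamma \notin W$. Because we work over a field, there is a linear functional $\lambda : H_1(K;\R) \to \R$ with $\lambda|_W = 0$ and $\lambda(\gamma) = 1$. By the universal coefficient theorem over a field, $H^1(K;\R) \cong \mathrm{Hom}(H_1(K;\R),\R)$ via the evaluation pairing recalled above. Hence $\lambda$ is represented by a cocycle $\psi \in C^1(K;\R)$, with $\psi(c) = 1$ and $\psi(z) = 0$ for every cycle $z$ of $K \sminus C$.

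\textbf{Step 2 (killing $\psi$ off $C$).} The restriction $\psi|_{K \sminus C}$ is a cocycle on $K \sminus C$ that evaluates to zero on $H_1(K\sminus C)$. By the same duality applied to $K \sminus C$, it is a coboundary there: $\psi|_{K\sminus C} = \partial^*_1 f$ for some $f \in C^0(K\sminus C;\R)$.

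Removing edges deletes only the edges in $C$ and the triangles containing them. So $K \sminus C$ has the same vertex set as $K$, and every edge of $K$ not in $C$ lies in $K \sminus C$. Thus $f$ is already a $0$-cochain on $K$, and we set $\varphi := \psi - \partial^*_1 f$. Then $\varphi$ is a cocycle on $K$, it vanishes on every edge outside $C$ (so $\mathrm{Supp}(\varphi) \subseteq C$), and $\varphi(c) = \psi(c) - f(\partial c) = 1$ since $\partial c = 0$.

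\textbf{Step 3 (support is exactly $C$).} Let $C' := \mathrm{Supp}(\varphi) \subseteq C$. Every edge of $K\sminus C'$ lies outside $C'$, so $\varphi$ vanishes on every $1$-chain of $K \sminus C'$. If some cycle $z$ of $K\sminus C'$ satisfied $[z] = \gamma$ in $H_1(K)$, the cocycle property would give $\varphi(z) = \varphi(c) = 1$, which is impossible. Hence $C'$ is itself a homological edge cut for $\gamma$, and minimality of $C$ forces $C' = C$.

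The only step needing care is the duality in Steps 1--2. This is the identification of cohomology with the dual of homology and the fact that $H^1(K)\to H^1(K\sminus C)$ is the transpose of $\ihomo$. Since coefficients are in the field $\R$, this is standard, so I expect no real obstacle beyond checking that $K\sminus C$ keeps all vertices, which is what allows $f$ to extend to $K$.
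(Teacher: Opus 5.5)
Your proof is correct, but it takes a different route from the paper's. The paper never invokes duality. For each $e\in C$ it uses minimality to pick a cycle $c_e$ in $K-(C\setminus\{e\})$ with coefficient $1$ on $e$ and $[c_e]=\alpha_e\gamma$, $\alpha_e\neq 0$ (and sets $c_e:=0$ for $e\notin C$). It then defines $\varphi(e)=\alpha_e$ on $C$ and $0$ elsewhere, and checks the cocycle condition triangle by triangle. For a $2$-simplex $\sigma$ with $\partial\sigma=e_1-e_2+e_3$, the cycle $c_{e_1}-c_{e_2}+c_{e_3}-\partial\sigma$ lies in $K-C$ and represents $(\alpha_{e_1}-\alpha_{e_2}+\alpha_{e_3})\gamma$, so that coefficient must vanish because $C$ is a cut. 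Finally $\varphi(c)=1$ follows from $\varphi(c_e)=\alpha_e$, and the support is exactly $C$ by construction.

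You instead separate $\gamma$ from $W$ by a linear functional and correct the resulting cocycle by a coboundary, using that $K-C$ keeps all vertices and all edges outside $C$. Minimality enters only at the end. This buys you a stronger intermediate result: Steps 1--2 hold for every cut, minimal or not. Together with Step 3, which is exactly the paper's argument for \Cref{PROP:ell0}, you get a clean characterization: $C$ is a cut for $\gamma$ if and only if $C$ contains the support of a cocycle $\varphi$ with $\varphi(c)=1$. That makes \Cref{PROP:ell0} immediate.

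The paper's route is more hands-on. It gives explicit values $\varphi(e)=\alpha_e$ read off from witness cycles, and its later discussion of the condition $\varphi_C(e)\le 1$ relies on these. The two constructions in fact agree. For an inclusion-minimal $C$, any cocycle $\varphi$ supported in $C$ with $\varphi(c)=1$ satisfies $\varphi(e)=\varphi(c_e)=\alpha_e$, so such a cocycle is unique.
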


\begin{proof}
	For $e\in C$, choose a cycle $c_e = \sum_i \lambda_{e_i} e_i \in C_1(K \sminus C \cup \{e\} ;\R)$ with $\lambda_e = 1$ and $[c_e] = \alpha_e \cdot [c] \neq 0$ for some $\alpha_e \in \R$, which is possible by minimality of $C$. For $e\notin C$ we set $c_e = 0$ and $\alpha_e = 0$. 
    Note that for any edge $e$, $c_e - e \in C_1(K \sminus C ;\R)$.
    Define $\varphi_C \in C_1(K; \R)$ via $\varphi_C(e) = \alpha_e$. 
    We claim that $\varphi_C$ satisfies the cocycle condition. 
    Let~$\sigma$ be any 2-simplex, with $\partial \sigma = e_1 - e_2 + e_3$. Consider the cycle $c_\sigma = c_{e_1} - c_{e_2} + c_{e_3} - \partial \sigma$. 
    We see that $c_\sigma \in C_1(K \sminus C ; \R)$, and that in $H_1(K ; \R)$, we have $[c_\sigma] = (\alpha_{e_1} - \alpha_{e_2} + \alpha_{e_3} ) [c] $. We conclude from the fact that $C$ is a cut that $\varphi_C(\partial \sigma) = \alpha_{e_1} - \alpha_{e_2} + \alpha_{e_3} = 0$.
    As $\varphi_C$ satisfies the cocycle condition, we have  $\varphi_C(c) = \varphi_C (\frac{1}{\alpha_e} c_e) =\frac{1}{\alpha_e} \varphi_C(e) = 1$ for any $e \in C$.
\end{proof}

This allows us to formulate the minimum homological edge cut problem as follows:

\begin{proposition}\label{PROP:ell0}
	Let $\|v\|_0$ denote the number of non-zero entries in a vector $v$. A minimum homological edge cut for a class $\gamma = [c] \in H_1(K; \R)$ is given by an optimum solution to the following optimization problem:
\begin{gather}
\tag{MC}
\label{EQ:mincutsparseLP}
\begin{aligned}
	\mathrm{mc} = \min_{\varphi \in C^1(K;\R) } \quad &\|\varphi\|_0\\
\textrm{s.t.} \quad & \partial^* \varphi &&= 0 \\
& \varphi(c) &&= 1
\end{aligned}
\end{gather}
(Here, $\varphi$ is interpreted as a vector via its decomposition $\varphi = \sum_{e} \lambda_e \mathbf{1}_{e}$ in the standard basis.)
\end{proposition}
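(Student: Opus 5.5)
We prove that \eqref{EQ:mincutsparseLP} and \eqref{EQ:MHC} (with $s=1$) have the same optimum, and that supports of optimal solutions are minimum cuts. There are two directions.

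First, we show that the support of any feasible $\varphi$ is a homological edge cut for $\gamma$. Let $\varphi$ satisfy $\partial^*\varphi = 0$ and $\varphi(c)=1$, and set $C := \mathrm{Supp}(\varphi)$. Suppose for contradiction that $\gamma \in \Ima(\ihomo_{K \sminus C \hookrightarrow K})$. Then there is a cycle $c' \in Z_1(K \sminus C;\R)$ with $[c'] = [c]$ in $H_1(K;\R)$. Every edge of $K \sminus C$ lies outside $C$, so $\varphi$ vanishes on every edge appearing in $c'$, and hence $\varphi(c') = 0$. On the other hand, $\varphi$ is a cocycle, so by the evaluation fact recalled above it is constant on homology classes. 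This gives $\varphi(c') = \varphi(c) = 1$, a contradiction. Therefore $C$ is a cut, and $\mathrm{mhc}(K,\gamma,1) \le \|\varphi\|_0$ for every feasible $\varphi$.

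Second, we show that a minimum cut yields a feasible $\varphi$ of the same size. Let $C$ be a minimum homological edge cut for $\gamma$. A minimum cut is in particular inclusion-minimal, since a cut strictly contained in $C$ would be smaller. We may assume $\gamma \neq 0$; otherwise no cut exists, because $0$ always lies in the image, and the program is infeasible as well, since $\varphi(c)$ would equal $0$. With these hypotheses, \Cref{LEM:Cut supports cocycle} applies and gives a cocycle $\varphi$ with $\mathrm{Supp}(\varphi) = C$ and $\varphi(c)=1$. This $\varphi$ is feasible for \eqref{EQ:mincutsparseLP} and satisfies $\|\varphi\|_0 = |C|$, so $\mathrm{mc} \le \mathrm{mhc}(K,\gamma,1)$.

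Combining the two directions gives $\mathrm{mc} = \mathrm{mhc}(K,\gamma,1)$. Moreover, by the first direction the support of any optimal $\varphi$ is a cut of size $\mathrm{mc}$, so it is a minimum homological edge cut, which is the claim. The only delicate point is in the second direction: \Cref{LEM:Cut supports cocycle} assumes inclusion-minimality, and minimum size implies it. The first direction needs only that $K \sminus C$ contains no edge of $C$, which holds by the definition of $K \sminus C$.
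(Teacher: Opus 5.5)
Your proof is correct and follows the paper's own argument. The support of any feasible cocycle is a cut, because $\varphi$ is constant on homology classes yet vanishes on every cycle in $K - \mathrm{Supp}(\varphi)$; conversely, \Cref{LEM:Cut supports cocycle} applied to a minimum (hence inclusion-minimal) cut gives a feasible $\varphi$ of the same size. Your explicit handling of the case $\gamma = 0$, and of why a minimum cut is inclusion-minimal, fills in details the paper leaves implicit.
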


\begin{proof}
	Note that if $\varphi$ is a cocycle with $\varphi(c) = 1$, then any cycle $c' = \sum \lambda_e e$  homologous to a scalar multiple of $c$ has $\lambda_e \neq 0$ for some $e \in \mathrm{Supp}(\varphi)$.
    It follows that $\mathrm{Supp}(\varphi)$ is a homological cut for $c$. Together with Lemma~\ref{LEM:Cut supports cocycle}, this proves the proposition. 
\end{proof}

\subsection{Linear programming relaxations}
Note that the constraints on $\varphi$ in~\eqref{EQ:mincutsparseLP} are linear. Thus, an optimum solution to~\eqref{EQ:mincutsparseLP} is the \emph{sparsest} solution to a set of linear (in)equalities. It is well-known that finding such a solution is NP-hard in general, see, e.g.,~\cite{hardnessofL0}. 
A common strategy to approximate problems of this form is to replace the non-convex $0$-norm by the convex $1$-norm~\cite{sparsetoLP:CompressiveSampling, sparsetoLP}.
In our case, this leads to:
\begin{gather}
\label{EQ:mincutLP} 
\tag{P}
\begin{aligned}
	\widetilde{\mathrm{mc}} = \min_{\varphi \in C^1(K;\R) } \quad &\|\varphi\|_1\\
\textrm{s.t.} \quad & \partial^* \varphi &&= 0 \\
& \varphi(c) &&= 1
\end{aligned}
\end{gather}
This optimization problem can be cast as a linear program (LP), and it can therefore be solved efficiently. Its dual is given by
\begin{gather}
\label{EQ:maxflowLP} 
\tag{D}
\begin{aligned}
	\mathrm{mf} = \max_{(B,r) \in C_2(K;\R) \times \R} \quad r \quad
\textrm{s.t.} \quad  \| \partial B + r \cdot c \|_\infty \leq 1.
\end{aligned}
\end{gather}

\subsection{Max-flow min-cut}
If $(B,r)$ is a feasible solution to~\eqref{EQ:maxflowLP}, then $\partial B + r \cdot c =  \sum_e \lambda_{e} e$ is a cycle homologous to $r \cdot c$ with $\lambda_{e} \leq 1$ for all edges $e$.
The LP thus computes a representative for the maximum multiple of $[c]$ which satisfies a capacity constraint of~$1$ on each edge. 
That is, it computes the \emph{maximum flow} in the subspace of $H_1(K;\R)$ generated by $[c]$. It is a homological analogue of the (classical) max-flow problem in graphs. Contrary to the situation for graphs, however, we do not have a \emph{max-flow min-cut theorem} in our setting. That is, we have $\mathrm{mc} \neq \mathrm{mf} \,(= \widetilde{\mathrm{mc}})$ in general, as the following example shows.

\begin{example}  \label{EXMP:mincutmaxflow} Consider the cycle $c$ drawn with red double arrows in the simplical complex depicted in~\Cref{fig:enter-label} (all coefficients are~$1$). Note that $c$ winds around the leftmost hole once, and winds around the rightmost hole twice. A minimum edge cut for $[c]$ consists of 3 edges, for example the three edges incident to the vertex $p$ that lie above $p$. However, the max flow for $[c]$ is only~$\frac{3}{2}$. Consider the cycles $c_1$ and $c_2$ drawn with single green arrows (all coefficients are~$1$). Note that~$c_1$ winds around the leftmost hole once, and $c_2$ winds around the rightmost hole once. Thus, $[c_1 + 2c_2] = [c]$.
Adding $\frac{1}{2}c_1 + c_2$ to $c$ yields a homology class  $[c+\frac{1}{2}c_1+c_2] = \frac{3}{2} [c]$, that is, achieving flow $\frac{3}{2}$. The cycle $c+\frac{1}{2}c + c_2$ fully uses the capacity constraints of $1$ on the three edges above $p$. Therefore, we cannot add another cycle that winds around the rightmost hole without violating the capacity constraint. This cycle is thus maximum. 
\end{example}
\begin{remark}
    The complex and homology class of~\Cref{EXMP:mincutmaxflow} satisfy the conditions of~\Cref{THM:embeddedeasy}. That is to say, the natural linear programming relaxation of~\eqref{EQ:MHC} is not tight, even in situations where an efficient algorithm exists.
\end{remark}

\begin{figure}[h]
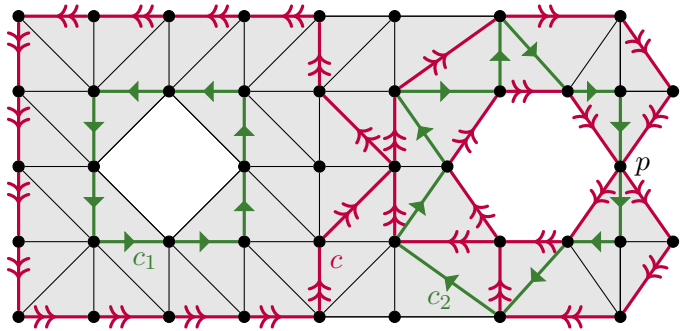

    \centering
\include{tikz/nomincutmaxflow}
\vspace{-1cm}
    \caption{The simplicial complex and cycles $c$ (red, double arrows) and $c_1, c_2$ (green, single arrows, on the left and right, respectively) of \Cref{EXMP:mincutmaxflow}.}
    \label{fig:enter-label}
\end{figure}

\subsection{Upper and lower bounds} Even though $\mathrm{mc} \neq \widetilde{\mathrm{mc}}$ in general, the linear program~\eqref{EQ:mincutLP} can potentially be used to compute lower and upper bounds on $\mathrm{mc}$.
First, note that the support of an optimal solution~$\varphi$ to \eqref{EQ:mincutLP} is an edge cut for~$[c]$, and so its cardinality is an upper bound on the size of a min-cut. 
Next, assuming that the cocycle $\varphi_C$ associated to a minimum edge cut $C$ for $[c]$ constructed in \Cref{LEM:Cut supports cocycle} satisfies $\| \varphi_C \|_1 
\leq \|\varphi_C\|_0$, we see that the optimum of~\eqref{EQ:mincutLP} gives a lower bound on the size of a min-cut. This assumption holds, in particular, when $\varphi_C(e) \leq 1$ for all edges $e \in C$. The latter seems like a reasonable condition; it is an interesting question whether we can characterize the complexes $K$ and classes $[c] \in H_1(K;\R) $ for which it holds.

%% file: tikz/nomincutmaxflow.tex
\begin{tikzpicture}[scale=1]

\tikzset{
  doublemidarrow/.style={
    decoration={
      markings,
      mark=at position 0.5 with {\arrow{>>}}
    },
    postaction={decorate}
  }
}

\draw[fill=gray!20] (-3,1) -- (-3, 5) -- (5,5) -- (5,1) -- cycle;
\draw[fill=gray!20] (5,5) -- (5.7, 4) -- (5,3) -- (5.7,2) -- (5, 1) -- cycle;
\draw[fill=white] (-2,3) -- (-1, 4) -- (0,3) -- (-1,2) -- cycle;
\draw[fill=white] (3.4,2) -- (2.7, 3) -- (3.4,4) -- (4.3,4) -- (5, 3) -- (4.3,2) -- cycle;

  \node[draw, circle, inner sep=1.5pt, fill] (A00) at (-3, 1) {};
\node[draw, circle, inner sep=1.5pt, fill] (A0) at (-2, 1) {};
    \node[draw, circle, inner sep=1.5pt, fill] (A1) at (-1, 1) {};
    \node[draw, circle, inner sep=1.5pt, fill] (A2) at (0, 1) {};
    \node[draw, circle, inner sep=1.5pt, fill] (A3) at (1, 1) {};
    \node[draw, circle, inner sep=1.5pt, fill] (A4) at (3.4, 1) {};
    \node[draw, circle, inner sep=1.5pt, fill] (A5) at (5, 1) {};
    \node[draw, circle, inner sep=1.5pt, fill] (Aextra) at (2, 1) {};

\node[draw, circle, inner sep=1.5pt, fill] (B00) at (-3, 2) {};
\node[draw, circle, inner sep=1.5pt, fill] (B0) at (-2, 2) {};
    \node[draw, circle, inner sep=1.5pt, fill] (B1) at (-1, 2) {};
    \node[draw, circle, inner sep=1.5pt, fill] (B2) at (0, 2) {};
    \node[draw, circle, inner sep=1.5pt, fill] (B3) at (1, 2) {};
    \node[draw, circle, inner sep=1.5pt, fill] (B4) at (2, 2) {};
    \node[draw, circle, inner sep=1.5pt, fill] (B5) at (3.4, 2) {};
    \node[draw, circle, inner sep=1.5pt, fill] (B6) at (4.3, 2) {};
    \node[draw, circle, inner sep=1.5pt, fill] (B7) at (5, 2) {};
    \node[draw, circle, inner sep=1.5pt, fill] (B8) at (5.7, 2) {};

\node[draw, circle, inner sep=1.5pt, fill] (C00) at (-3, 3) {};
\node[draw, circle, inner sep=1.5pt, fill] (C0) at (-2, 3) {};
    \node[draw, circle, inner sep=1.5pt, fill] (C2) at (0, 3) {};
    \node[draw, circle, inner sep=1.5pt, fill] (C3) at (1, 3) {};
    \node[draw, circle, inner sep=1.5pt, fill] (C4) at (2, 3) {};
    \node[draw, circle, inner sep=1.5pt, fill] (C5) at (2.7, 3) {};
    \node[draw, circle, inner sep=1.5pt, fill] (C6) at (5, 3) {};

\node[draw, circle, inner sep=1.5pt, fill] (D00) at (-3, 4) {};
\node[draw, circle, inner sep=1.5pt, fill] (D0) at (-2, 4) {};
    \node[draw, circle, inner sep=1.5pt, fill] (D1) at (-1, 4) {};
    \node[draw, circle, inner sep=1.5pt, fill] (D2) at (0, 4) {};
    \node[draw, circle, inner sep=1.5pt, fill] (D3) at (1, 4) {};
    \node[draw, circle, inner sep=1.5pt, fill] (D4) at (2, 4) {};
    \node[draw, circle, inner sep=1.5pt, fill] (D5) at (3.4, 4) {};
    \node[draw, circle, inner sep=1.5pt, fill] (D6) at (4.3, 4) {};
    \node[draw, circle, inner sep=1.5pt, fill] (D7) at (5.7, 4) {};
    \node[draw, circle, inner sep=1.5pt, fill] (D7extra) at (5, 4) {};

\node[draw, circle, inner sep=1.5pt, fill] (E00) at (-3, 5) {};
\node[draw, circle, inner sep=1.5pt, fill] (E0) at (-2, 5) {};
    \node[draw, circle, inner sep=1.5pt, fill] (E1) at (-1, 5) {};
    \node[draw, circle, inner sep=1.5pt, fill] (E2) at (0, 5) {};
    \node[draw, circle, inner sep=1.5pt, fill] (E3) at (1, 5) {};
    \node[draw, circle, inner sep=1.5pt, fill] (E4) at (2, 5) {};
    \node[draw, circle, inner sep=1.5pt, fill] (E5) at (3.4, 5) {};
    \node[draw, circle, inner sep=1.5pt, fill] (E6) at (5, 5) {};
    
    \foreach \i/\j in {A1/A2, A2/A3, A3/A4, A4/A5, 
                       A1/B1, A2/B2, A3/B3, A4/B4, A3/B4, 
                       B1/B2, B2/B3, B3/B4, B4/B5, B5/B6, B6/B8,
                       B2/C2, B3/C3, B4/C4, B5/C5, B6/C6, 
                       C2/C3, C3/C4, C4/C5,
                       C2/D2, C3/D3, C4/D4, C5/D5, C6/D6, D6/D7,
                       D1/D2, D2/D3, D3/D4, D4/D5, D5/D6, D6/D7,
                       D1/E1, D2/E2, D3/E3, D4/E4, D5/E5, D6/E6,
                       E1/E2, E2/E3, E3/E4, E4/E5, E5/E6,
                       A5/B6, A5/B8, B7/C6, B7/A5,
                       E6/D7, D7/C6, C6/B8, D7extra/E6, D7extra/C6,
                       A2/B1, C2/B1, C2/D1, E2/D1,
                       A3/B2, B3/C2, C3/D2, E3/D2,
                              B3/C4, C4/D3, E4/D3,
                       A4/B5, A4/B6,
                       B4/C5, C5/D4,
                       D4/E5, E5/D6,
                       B0/B00,B0/B1,B0/C0,B0/A0, B0/A00, B0/C00,B0/A1,
                       C0/C00,C0/D0, C0/D00, C0/D1, C0/B1,
                       D0/D00, D0/E00, D0/E0, D0/E1,D0/D1,
                       Aextra/B4} {
        \draw[-] (\i) -- (\j);
        }
        \foreach \i/\j in { A3/B3, B3/C4, C4/D4, D4/E5, E5/E6, E6/D7, D7/C6, C6/B8, B8/A5, A5/A4, A4/B5, B5/C5, C5/D5, D5/D6, D6/C6, C6/B6, B6/B5, B5/B4, B4/C4, C4/D3, D3/E3,
        E3/E2, E2/E1, E1/E0, E0/E00,
        E00/D00, D00/C00, C00/B00, B00/A00,
        A00/A0, A0/A1, A1/A2, A2/A3} {
        \draw[purple, sloped, allow upside down, very thick, doublemidarrow] (\i) -- node {} (\j);
        }
        
        \node[xshift=0.3cm] () at (C6) {$p$};
        \node[xshift=-0.32cm, yshift=-0.25cm, OliveGreen] () at (B1) {$c_1$};
        \node[xshift=-0.8cm, yshift=0.22cm, OliveGreen] () at (A4) {$c_2$};
        \node[xshift=0.22cm, yshift=-0.27cm, purple] () at (B3) {$c$};

        \foreach \i/\j in {D7extra/C6, C6/B7, B7/B6, B6/A4, A4/B4, B4/C5,C5/D4, D4/D5, D5/E5, E5/D6, D6/D7extra} {
        \draw[OliveGreen, sloped, allow upside down, very thick] (\i) -- node {\midarrow} (\j);
        }

        \foreach \i/\j in {D0/D1, D1/D2, D2/C2, C2/B2, B2/B1, B1/B0, B0/C0, C0/D0} {
        \draw[OliveGreen, sloped, allow upside down, very thick] (\j) -- node {\midarrow} (\i);
        }
        
\end{tikzpicture}

%% file: Appendix.tex
\label{SEC:omittedproofs}

\subsection{\texorpdfstring{Proofs omitted from~\Cref{SEC:NPhardness}}{}}
\label{SEC:omittedproofs:hardness}

\begin{proof}[Proof of~\Cref{LEM:Triangulation}]
    We describe the triangulation $L_u$ of the surface $F_u$ associated to an element $u \in U$. There are two basic ``building blocks" we use in our construction. The first consists of 4 triangles, triangulating a disk with 3 holes. The second is a ``double collar", consisting of 12 triangles, that will be attached to boundary components of the first building block. These building blocks are depicted in the left and center positions of Figure~\ref{fig:triangulation_building_blocks}. Note that a minimum 1-cut for the 1-dimensional homology class generated by the outer boundary of the first building block consists of 2 edges. Also note that a minimum 1-cut for the 1-dimensional homology class generated by the outer boundary of the double collar consists of 5 edges. Moreover, such a cut can be chosen to contain any of the edges on its outer boundary, and any of the edges on its interior boundary.
    
    The rightmost complex in Figure~\ref{fig:triangulation_building_blocks} shows how the collar is attached to the outer boundary of the first building block. This is the main building block we use, which we call $M$. A minimum $1$-cut for the homology class generated by its outer boundary consists of 6 edges, and can be chosen to contain any one of the edges on its outer boundary component. 
\begin{figure}[h]
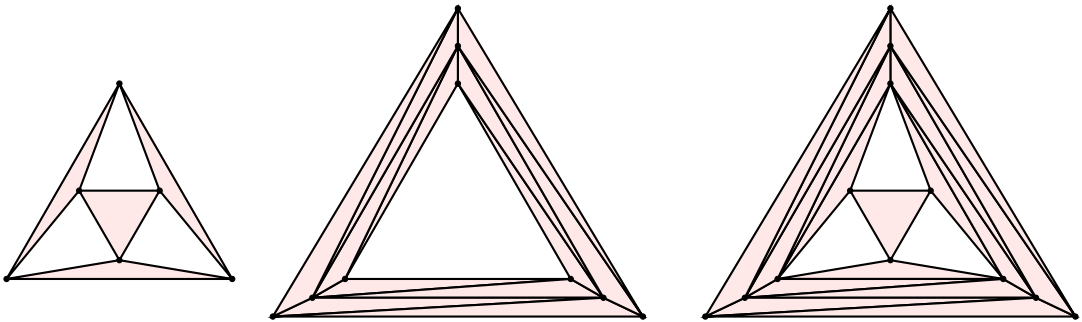

    \centering
\include{tikz/collar_triangulation}
\vspace{-1cm}
    \caption{On the left, the triangulation of the basic building block triangulating a disk with 3 holes. In the middle the triangulation of the double collar. On the right the main building block $M$, defined by gluing of the double collar along its interior boundary to the outer boundary of the disk with three holes.}
    \label{fig:triangulation_building_blocks}
\end{figure}

We now construct $L_u$ as follows:
\begin{enumerate}
    \item We start with a copy of $M$, setting $L_u' =M$, consisting 12 triangles. In Figure~\ref{fig:triangulation_L_u} we see on the left what $L_u'$  looks like at the start.
    \item While the number of interior boundary components of $L_u'$ is (strictly) less than $k$: for each interior boundary component we take a copy of $M$ and glue it along its outer boundary to the interior boundary component of $L_u'$.\\
    Note that this step triples the amount of boundary components of $L_u'$ every time it is applied. In Figure~\ref{fig:triangulation_L_u} we see on the right what $L_u'$ looks like after applying step (2) once. If $\lceil \log_3(k)\rceil>9$, it needs to be applied again. At the next iteration it will have 27 interior boundary components. 
    \item Once the number of interior boundary components exceeds $\lceil \log_3(k)\rceil$, for each element of $\mathcal{S}$ containing $u$, we glue a copy of the double collar to an interior boundary component.
    \item We label the outer boundary by $u$, and the interior boundary components of the collars attached in the previous step by the corresponding subsets of $\mathcal{S}$ in which $u$ occurs. Unused boundary components we remove by gluing in a triangle. The simplicial complex obtained from this is $L_u$.
\end{enumerate}
\begin{figure}[h]
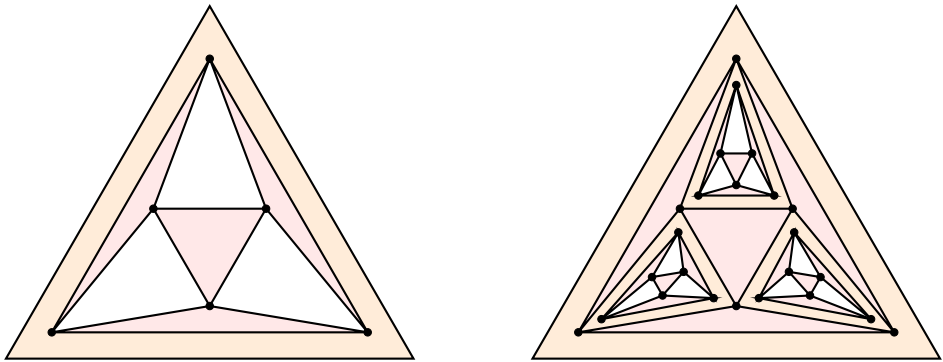

    \centering
\include{tikz/triangulation_L_u}
\vspace{-1cm}
    \caption{On the left $L_u'$ at step (1). This is the same as the main building block $M$, but we have chosen to represent the collar with a different colour, and without drawing all of its edges. On the right we see what $L_u'$ looks like after one iteration of step (2). }
    \label{fig:triangulation_L_u}
\end{figure}
In Figure~\ref{fig:axample_triangulation_L_u} we show an example of $L_u$ for an element $u$ contained in 4 sets, $S_1,S_2,S_3,S_4$, when $\lceil \log_3(k) \rceil=2$. We obtain $K(U,\mathcal{S})$ by gluing together all the triangulations $L_u$ along the identically labeled boundary components, and by identifying all boundary components labeled by elements of~$U$.
\begin{figure}[h]
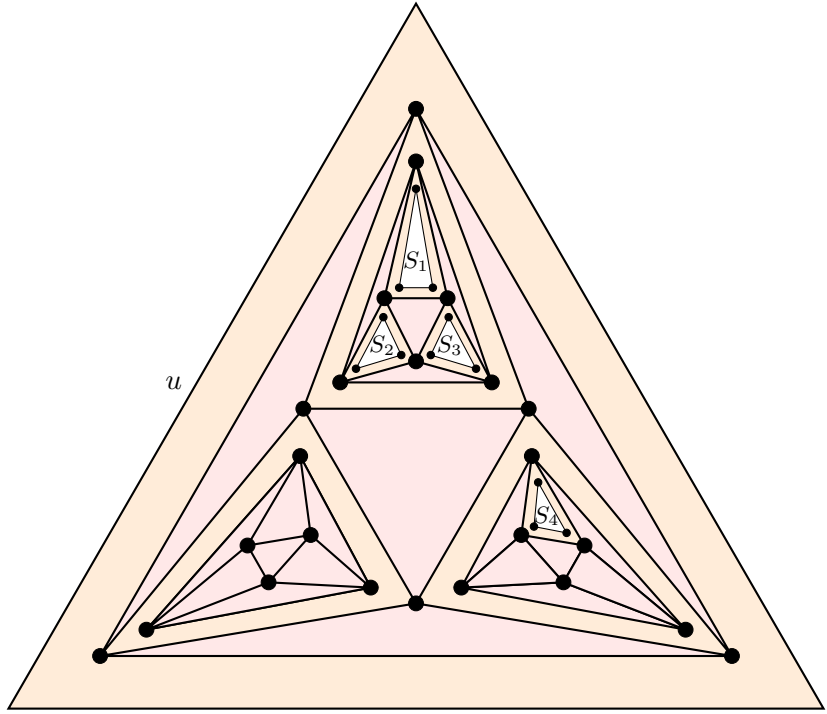

    \centering
\include{tikz/triangulation_example}
\vspace{-1cm}
    \caption{The simplicial complex $L_u$ for a $u$ occurring in sets $S_1,S_2,S_3,S_4$, and where $k= 2$.}
    \label{fig:axample_triangulation_L_u}
\end{figure}
We now verify the properties we claim $L_u$ to have.
\begin{enumerate}
    \item At step (1) $L_u'$ consists of 16 triangles. Applying step (2) once adds $3\cdot 16$ triangles, for a total of 64. Applying it again adds another $3^2 \cdot 16$ triangles, for a total of 208. More generally, after $\ell$ iterations of step (2), it $L_u'$ consists of $8(3^\ell -1)$ triangles. Step (2) is iterated $\lceil \log_3(k)\rceil -1$ times, so the total number of triangles of $L_u'$ at the end of (2) is in $O(3^{\lceil \log_3(k)\rceil })  =O(k)$, which is bounded by $O(|\mathcal{S}|)$. Step $(3)$ adds at most $12 \cdot k  $ triangles, which is bounded by $O(\log|\mathcal{S}|)$. Finally, the number of triangles added at step  (4) is at most $k$, which is also bounded by $O(|\mathcal{S}|)$. It follows that the number of triangles in $L_u$ is in $O(|\mathcal{S}|)$. We have such a complex for each $u \in U$, so the number of triangles in $K(U,\mathcal{S})$ is in $O(|U| \cdot |\mathcal{S}|)$.
    \item At step (1) a minimum homology cut for the homology class represented by the outer boundary of $L_u'$ consists of 6 edges, 2 on a boundary component, and 4 internal edges. It can be chosen to contain any edge of the outer boundary and an edge of any of the interior boundary components. For a thorough proof of this claim, we can apply Proposition~\ref{PROP:cut_alex_dual_complete}. After one iteration of step (2) this number has become $2\cdot 6 -1 = 11$, and each further iteration adds 5 edges. It remains true that the minimum cut can be chosen to contain precisely 1 edge from an interior boundary component. So at the end of step (2) the size of a minimum cut is $5\cdot \lceil \log_3(k)\rceil +1$. Adding the collars at step (3) adds an additional 4 edges, and step (4) adds nothing. So a minimum homology 1 cut for $[u] \in H_1(L_u)$ consists of $5\cdot \lceil \log_3(k)\rceil +5 = c+2$ edges.
    \item In the description of the main building block $M$ we already noted that a minimum homology cut for its outer boundary can be chosen to contain any of its outer boundary edges. As we iteratively glue copies of $M$ into itself, this remains true. As noted already above, the cut can be chosen to contain an edge from any of the interior boundary component. Since we attach a double collar to each of the ultimate interior boundary components, it is also true that we can choose the cut so that it contains \emph{any} of the edges in \emph{any} of the interior boundary components. \qedhere
\end{enumerate}

\end{proof}

\begin{proof}[Proof of Theorem~\ref{THM:NPhardness}]
    We show that there exists a solution to an instance $(U,\mathcal{S})$ of X3C if and only if there exists a 1-cut for $\gamma  \in H_1(K(U,\mathcal{S}))$ consisting of at most $c \cdot |U| + \frac{|U|}{3} + 1$ edges, where $\gamma$ is the image of any $[u]\in H_1(L_u)$ under the identification map.  Before proceeding, let us remark the following three things: 
\begin{enumerate}
    \item There exists a solution to $(U,\mathcal{S})$ if and only if $\mathcal{S}$ contains a cover consisting of precisely $\frac{|U|}{3}$ elements. 
    \item If $C$ is a cut for $\gamma$, then for each $u\in U$, the intersection of $C$ with the edges of $L_u$ yields a cut for $[u] \in H_1(L_u)$.  Indeed, suppose to the contrary that $[u] \in \mathrm{Im}(H_1(L_u \sminus (C \cap L_u))  \to H_1(L_u)) $, then by commutativity of the following diagram, $\gamma \in \mathrm{Im}(H_1(K\sminus C) \to H_1(K) )$:
    \[
    \begin{tikzcd}
    H_{1}(L_u \sminus (C\cap L_u) ) \arrow[r, " "] \arrow[d, " "'] & H_1(L_u)\ni [u]  \arrow[d, " "] \arrow[d,mapsto ,shift left =9]  \\
    H_{1}(K - C) \arrow[r, ""] & H_1(K)  \ni \gamma
    \end{tikzcd}
    \]
    \item Suppose $C_u$ is a minimum cut for $[u] \in H_1(L_u)$, containing the edge $e_u$ in the outer boundary component $u$, and the edge $e_{S_C}$ in one of the interior boundary components, corresponding to a set $S_C \in \mathcal{S}_u$. Then each 2-simplex of $L_u$ contains precisely 0 or 2 elements of $C$ in its boundary. It follows that there exists a 1-cochain $\varphi_u \in C^1(L_u)$ with $\varphi_u(\vec{e}_u) = 1$ and $\varphi_u( \vec e_{S_u}) = -1$, taking   values $\pm 1$ on the remaining edges of $C$, and $0$ everywhere else, satisfying $\partial^* \varphi_u = 0$. In other words, $\varphi_u$ is a cocycle, and so for each cycle $\zeta \in C_1(L_u)$ homologous to $u$, $\varphi_u (\zeta) = 1$. 
\end{enumerate}

Now suppose there is a solution $\mathcal{S}^* \subseteq \mathcal{S}$ to $(U,\mathcal{S})$. For once and for all, choose an edge $e_\gamma$ occurring in $\gamma$, and for each $S \in \mathcal{S^*}$ an edge $e_S$ in the cycle $[S] \in H_1(K)$ corresponding to $S$. For each $u \in U$, denote the element of the solution containing $u$ by $S_u$, and choose a minimum cut $C_u$ for $[u] \in H_1(L_u)$ containing the edge corresponding to $e_\gamma$ in $u$ and the edge corresponding to $e_{S_u}$ in the boundary component corresponding to $S_u$. By point (2) of Lemma~\ref{LEM:Triangulation}, $C_u$ contains $c = c(U,\mathcal{S}) = 5 \lceil \log_3(k) \rceil +3$ internal edges and 2 boundary edges. Let $C$ denote the union of these cuts inside $K(U,\mathcal{S})$. As we have consistently chosen the same edges in unified boundary components, we see that $|C| = c \cdot |U| + \frac{|U|}{3} + 1$. Indeed, we have $c\cdot |U|$ internal edges, 1 edge for each element of the solution, of which there are $\frac{|U|}{3}$, and one in $\gamma$.\\
To see that $C$ is a cut for $\gamma$, let $\varphi_u \in C^1(L_u)$ denote the cocycle supported on $C_u$ described in remark (3) above. Considering each $L_u$ as a subcomplex of $K(U,\mathcal{S})$, we see that the functions $(\varphi_u)_{u \in U}$ agree on common intersections. It follows that they extend to a cochain $\varphi \in C^1(K(U,\mathcal{S})) $ with $\mathrm{supp}(\varphi) = C$. We see that $\partial^* \varphi = 0$, since each 2-simplex $\sigma$ of $K(U,\mathcal{S})$ occurs in precisely one subcomplex $L_u$, and $\varphi(\partial \sigma) = \varphi_u(\partial \sigma) = 0$. Hence, for each $\zeta \in C_1(K)$ homologous to $\gamma$,  $\varphi(\zeta) = \varphi(\gamma) = 1$, and so the support of $\varphi$ contains at least one edge occurring in $\zeta$. It follows that $C$ is indeed a cut for $\gamma.$

Conversely, suppose that $C$ is a cut for $\gamma$. Then we obtain a cover $\mathcal{C} \subseteq \mathcal{S}$ for $U$ as follows: By remark (2) above, $C$ contains a cut $C_u$ for each $[u] \in H_1(L_u)$, and so by property (3) of Lemma~\ref{LEM:Triangulation}, $C_u$ contains at least one edge from $u$, and one edge from an interior boundary component $S_{C,u}$ of $L_u$. The interior boundary components of $L_u$ are labeled by elements of $\mathcal{S}$ containing $u$, so $u\in S_{C,u}$.  It follows that $\mathcal{C} = \{S_{C,u} \, | \, u  \in U\} $ is a cover for $U$. Similarly to above, we see that $|C| \geq c \cdot |U| + |\mathcal{C}| +1 $. It follows that if $|C| = c \cdot |U| + \frac{|U|}{3} + 1$, then $|\mathcal{C}| = \frac{|U|}{3}$, and $\mathcal{C}$ is an exact cover.
This concludes the proof that there exists a solution to an instance $(U,\mathcal{S})$ of X3C if and only if there exists a 1-cut for $\gamma  \in H_1(K(U,\mathcal{S}))$ consisting of at most $c \cdot |U| + \frac{|U|}{3} + 1$ edges. This in turn completes the reduction from X3C. 
\end{proof}

\begin{remark}\label{REM:Hardness_vertex_cuts}
    To prove hardness for vertex cuts, the same triangulation $K(U,\mathcal{S})$ of Lemma~\ref{LEM:Triangulation} can be used. The entire proof is analogous to the one given for $s=1$. As an alternative for property (2) of Lemma~\ref{LEM:Triangulation} one would need to show that a minimum vertex cut for $[u] \in H_1(L_u)$ consists of $c' +2$ vertices, where $c' = 2 \lceil \log_3(k) \rceil +1$. Given a solution $\mathcal{S}^*$ to $(U,\mathcal{S})$, we can construct a cut vertex for $\gamma$ in a way analogous to the case of $s=1$. To show that this is an actual vertex cut, one can use the observation that $C$ is a vertex cut for $\gamma$ if and only if the collection of edges incident to the vertices of $C$ contains an edge cut for $\gamma$.  
\end{remark}

\subsection{\texorpdfstring{Proofs omitted from~\Cref{SEC:efficientalgorithm}}{}}
\label{SEC:omittedproofs:Alexander}
\begin{proof}[Proof of~\Cref{LEM:Alexander Duality}]
    For each $C \subseteq K^{(n-1)}$, we construct a piecewise linear embedding $f_C: \dualg{C} \hookrightarrow \R^n \setminus \|K \sminus C\|$. We do so by constructing an embedding $f: \dualg{K}^* \hookrightarrow \R^n \setminus \|K \sminus K^{(n-1)}\|$, and restricting $f$ to $\dualg{C}$. 
    For each vertex $v$ of $\dualg{K}^*$, corresponding to a region $R_v$, choose a point $f(v)\in R_v$.
    Each edge $e = (v,w)$ in $\dualg{K}^*$ corresponds to an $(n-1)$-simplex $\sigma_e\subseteq\R^n$ constituting a common simplex in the boundaries of the regions $R_v$ and $R_w$ corresponding to $v$ and $w$ respectively.
    We let $f$ embed $e$ into $\mathring{R_v} \cup \mathring{R_w} \cup \sigma_e$ as a piecewise linear path between $f(v) \in R_v$ and $f(w)\in R_w$. From the observation that 
    \begin{align*}
        \R^n \setminus \|K \sminus C\| = \R^n \setminus \|K\| \cup \left(\bigcup_{v \in V(\dualg{C})} {R}_v  \right) \cup \left( \bigcup_{(v,w) \in E(\dualg{C})} \sigma_{(v,w)} \right)
    \end{align*}
    it readily follows that $f_C := f
    |_{\dualg{C}} $ embeds $\dualg{C}$ into $\R^n \setminus \|K \sminus C\|$, and induces a bijection on $\pi_0$, and hence an isomorphism on $\tilde{H}^0$. Now, for $C \subseteq C' \subseteq K^{(n-1)}$, consider the following diagram, where the leftmost horizontal isomorphisms are given by Alexander duality, and the vertical maps are induced by inclusion maps:
    \[
    \begin{tikzcd}
    H_{n-1}(K - C') \arrow[r, "\cong "] \arrow[d, " "'] & \Tilde{H}^0(\R^n \setminus \|K - C'\|)\arrow[d, " "] \arrow[r, "\cong ","f_{C'}^*"'] \arrow[d, " "'] & \Tilde{H}^0(\dualg{C'})\arrow[d, " "] \\
    H_{n-1}(K - C) \arrow[r, "\cong"] & \Tilde{H}^0(\R^n \setminus \|K - C\|) \arrow[r, "\cong", "f_C^*"'] & \Tilde{H}^0(\dualg{C})
    \end{tikzcd}
    \]
    The leftmost square commutes by functoriality of Alexander duality, and the rightmost square commutes by construction of $f_C$. The composition \[H_{n-1}(K \sminus C) \xrightarrow[]{A.D. } \Tilde{H}^0(\R^n \setminus (K - C)) \xrightarrow[]{f_C^*} \Tilde{H}^0(\dualg{C})\] gives the desired isomorphism between $H_{n-1}(K \sminus C)$ and $\Tilde{H}^0(\dualg{C})$, concluding the proof.
\end{proof}

\subsection{\texorpdfstring{Proofs omitted from~\Cref{SEC:APPLICATION:HH}}{}}
\label{SEC:omittedproofs:hausdorff}
\begin{proof}[Proof of \Cref{THM:main:HH}.]
The main idea is to use the induced matching theorem (\Cref{THM:inducedmatching}). To do so, we cannot work directly with the simplex-wise discretization $\OVR(X)$ of the Rips filtration. Indeed, \Cref{THM:main:HH} is concerned with the lengths of bars in the barcode of $\VR(X)$, which are completely forgotten after passing to $\OVR(X)$. Instead, we work with an \emph{$\epsilon$-smoothening} of~$\OVR(X)$, which is a simplex-wise filtration, indexed by $\R$, whose discretization is $\OVR(X)$, but which is $\epsilon$-interleaved with $\VR(X)$.

\begin{definition}
    Let $r_1<r_2<\ldots<r_M$ denote the critical values of the Rips filtration on~$X$, being the scales at which a simplex is added to the complex.
    Write $\delta(X) = \min_i (r_{i+1} - r_i)$, and let $\pi: (\mathcal{P}(X),~  \preceq) \to (\{1,\ldots,2^{|X|}\},~\leq)$ denote an order-preserving bijection on the simplices supported on $X$. 
    Choose $0<\epsilon < {\delta(X)}/{2}$, and write $\mathbf{f}(\sigma) := \mathbf{r}(\sigma) + \frac{\epsilon}{2^{|X|}} \cdot \pi(\sigma)$ for $\sigma \subseteq X$. 
    The $\epsilon$-smoothing $\epsVR(X)$ of $\OVR(X)$ is the $\R$-indexed filtration defined by $\epsVR_r(X) := \{\sigma \in \mathcal{P}(X) \, | \, \mathbf{f}(\sigma) \leq r\}$.
\end{definition}
From the definition, we see that $\epsVR(X)$ is simplex-wise, and simplices are inserted in the complex in the same order as $\OVR(X)$. Furthermore, we readily find that
\begin{equation} \label{EQ:Interleaved smoothing2}
    \epsVR_r(X) \subseteq \VR_r(X) \subseteq \epsVR_{r+\epsilon}(X) \quad \forall r \in \R.
\end{equation}
\begin{remark}\label{REM:Smooth matching}
    Bars in the barcode of $\OVR(X)$ are in one-to-one correspondence with bars in the barcode of $\epsVR(X)$ via the map $B = [\sigma_B, \tau_B) \mapsto [\mathbf{f}(\sigma_B),\mathbf{f}(\tau_B))$. 
    For $A \subseteq X$, the  matching $\imord_{X \setminus A \hookrightarrow X}$ induced by $\OVR(X) \setminus A \hookrightarrow \OVR(X)$ and the matching $\imatch^{\epsilon}_{X \setminus A \hookrightarrow X}$ induced by $\epsVR(X) \sminus A \hookrightarrow \epsVR(X)$ respect this identification in the sense that 
    \begin{align*}
        \imord_{X \setminus A \hookrightarrow X} \big([\sigma_{B'}, \tau_{B'})\big) = [\sigma_B, \tau_B) \, \textrm{ iff }\, \imatch^\epsilon_{X \setminus A \hookrightarrow X} \big([\mathbf{f}(\sigma_{B'}), \mathbf{f}(\tau_{B'}))\big) = [\mathbf{f}(\sigma_{B}), \mathbf{f}(\tau_{B})).
    \end{align*}
\end{remark}
Now, let $A \subseteq X$, and set $\delta = d_H(X \setminus A,X)$. For each $r \in \R$, there is a map $H_p(\VR_r(X)) \to H_p(\VR_{r+\delta}(X \setminus A))$ making the following diagram commute:
\begin{center}    
\begin{tikzcd}[row sep=large]
H_p(\VR_{r}(X \setminus A))\arrow[r] \arrow[d]
& H_p(\VR_{r+\delta}(X \setminus A)) \arrow[d] \\
H_p(\VR_r(X)) \arrow[r] \arrow[ur]
& H_p(\VR_{r+\delta}(X)) 
\end{tikzcd}
\end{center}
\noindent This can shown similarly to how stability of Rips persistence is proved, for example as in~\cite{Chazal2009}: The metric space $X$ can be embedded into $\ell^\infty (X)$ via $x \mapsto (y \mapsto d(x,y))$, and the Rips filtration on $X$ agrees with the \v{C}ech filtration on the image of $X$ in $\ell^\infty(X)$. By the Persistent Nerve Lemma \cite[Lemma 3.4]{chazal2008towards}, the \v{C}ech persistence on $X$ and $X \setminus A$ in $\ell^\infty(X)$ agree with the persistence of the sub levelset filtrations of the offset functions $d_X$ and $d_{X \setminus A}$ respectively. 
By definition, 
\[
d_X^{-1}(-\infty, r] \subseteq d_{X\setminus A}^{-1}(-\infty, r+d_H(X\setminus A,X)] = d_{X\setminus A}^{-1}(-\infty, r+\delta],
\]
proving our claim. 

From the commutativity of the diagram, it follows that the kernel and cokernel of $\PH(\VR(X \setminus A)) \to \PH(\VR(X))$ are $\delta$-trivial. From~\Cref{EQ:Interleaved smoothing2}, we may conclude that the cokernel of $\PH(\epsVR(X \setminus A)) \to \PH(\epsVR(X))$ is $(\delta+ 2\epsilon$)-trivial. 
Having chosen~$\epsilon > 0$ small enough, it then follows from the induced matching theorem (\Cref{THM:inducedmatching}) and~\Cref{REM:Smooth matching} that any bar $[\sigma_B,\tau_B)$ with $\mathbf{r}(\tau_B) - \mathbf{r}(\sigma_B) > \delta$ is in the image of the matching $\imord_{X\setminus A \hookrightarrow X}$ induced by $\OVR(X) \sminus A \hookrightarrow \OVR(X)$. 
We conclude that if $\mathbf{r}(\tau_B) - \mathbf{r}(\sigma_B) > \hhy$, then $[\sigma_B,\tau_B)$ is in the image of $\imord_{X \setminus A \hookrightarrow X}$ for any $A$ with $|A| \leq k$. In other words, $[\sigma_B,\tau_B)$ is $k$-robust. Note that this argument does not depend on the choice of ordering $\preceq$ (as this only affects the insertion order for simplices $\sigma, \tau$ with $\br(\sigma) = \br(\tau)$).
\end{proof}
\begin{proof}[Proof of \Cref{LEM:HHstructure}] 
Note that for any $A \subseteq X$, the Hausdorff distance $d_H( X \setminus A, X)$ equals
\begin{align*}
    \max \left\{\max_{x \in X\setminus A} d(x,X), ~\max_{x \in X}d(x, X\setminus A) \right\} 
    = \max_{x \in X}d(x, X\setminus A )
    = \max_{x \in A}d(x, X\setminus A ),   
\end{align*}
and so $\hhy = \max_{A \subseteq X, \, |A| = k} \max_{x \in A}d(x, X\setminus A )$.
Suppose $\max_{x \in X} d(x,\nu_k(x))$ is realized by $x_0 \in X$, and let $A_0 = \{x_0, \nu_1(x_0),\ldots, \nu_{k-1}(x_0)\}$. Then, we have 
\begin{align*}
   \hhy \geq d_H(X \setminus A_0,X) =  \max_{a \in A_0} d(a,X\setminus A_0) \geq d(x_0,\nu_k(x_0)). 
\end{align*}

Conversely, suppose $A_0 \subseteq X$ realizes $\max_{A \subseteq X,\, |A| \leq k} d_H(X\setminus A,X) = \hhy$. W.l.o.g., we may assume $|A|=k$. We claim that ${d_H(X\setminus A_0, X) = d(a, \nu_k(a))}$ for some $a \in A_0$. Suppose not. Let $a\in A_0$ and $x \in X$ such that $d_H(X \setminus A_0,X) = d(a, X \setminus A_0) = \min_{z \in X\setminus A_0}d(a,z) = d(a,x)$. Let $x'$ denote the second-nearest point to $a$ in $X \setminus A_0$ (only after $x$). By our assumption, $x$ is not the $k$-th nearest neighbor of $a$, so $x = \nu_j(a)$ for some $j < k$. There exists an $a' \in A_0$ with $d(a,a') > d(a,x)$ and $d(a', X\setminus A_0) < d(a, X \setminus A_0) = d(a,x)$. Now consider $A_0' := A_0 \cup \{x\} \setminus\{a'\}$. We see that 
\begin{align*}
    d(X \setminus A_0', X) \geq d(a, X \setminus A_0') =d(a,x') > d(a,x) = d(X \setminus A_0, X),
\end{align*}
contradicting our assumption that $A_0$ maximizes $d_H(X \setminus A, X)$. Thus, $d_H(X \setminus A_0, X) = d(a,\nu_k(a))$ for some $a \in A_0$, and so $\max_{A \subseteq X,\, |A| = k} d_H(X\setminus A,X) = d_H(X\setminus A_0,X) \leq \max_{x \in X} d(x,\nu_k(x))$.
\end{proof}

%% file: tikz/collar_triangulation.tex
\begin{tikzpicture}[scale=.5]
\def\x{5.2}
\def\y{11.5}

\filldraw[color = black, fill = red!9, thick] (0,0) -- (6,0) -- (3,.5) -- cycle;
\filldraw[color = black, fill = red!9, thick] (0,0) --  (3,5.2) -- (1.93,2.35) -- cycle;
\filldraw[color = black, fill = red!9, thick] (4.07,2.35) --  (3,\x) -- (6,0) -- cycle;
\filldraw[color = black, fill = red!9, thick] (1.93,2.35) --  (4.07,2.35) -- (3,.5) -- cycle;

\filldraw[black] (0,0) circle (2pt);
\filldraw[black] (6,0) circle (2pt);
\filldraw[black] (3,\x) circle (2pt);
\filldraw[black] (1.93,2.35) circle (2pt);
\filldraw[black] (4.07,2.35) circle (2pt);
\filldraw[black] (3,.5) circle (2pt);


\filldraw[color = black, fill = red!9, thick] (8.13,-0.5) -- (9,0) -- (15,0) -- cycle;

\filldraw[color = black, fill = red!9, thick] (8.13,-0.5) -- (15.87,-.5) -- (15,0) -- cycle;

\filldraw[color = black, fill = red!9, thick] (12,\x) -- (15.87,-.5) -- (15,0) -- cycle;
\filldraw[color = black, fill = red!9, thick] (12,\x) -- (15.87,-.5) -- (12,\x+1) -- cycle;

\filldraw[color = black, fill = red!9, thick] (12,\x) -- (9,0) -- (12,\x+1) -- cycle;
\filldraw[color = black, fill = red!9, thick] (8.13,-.5) -- (9,0) -- (12,\x+1) -- cycle;

\filldraw[color = black, fill = red!9, thick] (7.08,-1) -- (8.13,-0.5) -- (15.87,-0.5) -- cycle;

\filldraw[color = black, fill = red!9, thick] (7.08,-1) -- (16.92,-1) -- (15.87,-0.5) -- cycle;

\filldraw[color = black, fill = red!9, thick] (12,\x+1) -- (16.92,-1) -- (15.87,-0.5) -- cycle;
\filldraw[color = black, fill = red!9, thick] (12,\x+1) -- (16.92,-1) -- (12,\x+2) -- cycle;

\filldraw[color = black, fill = red!9, thick] (12,\x+1) -- (8.13,-0.5) -- (12,\x+2) -- cycle;
\filldraw[color = black, fill = red!9, thick] (7.08,-1) -- (8.13,-0.5) -- (12,\x+2) -- cycle;

\filldraw[black] (9,0) circle (2pt);
\filldraw[black] (15,0) circle (2pt);
\filldraw[black] (12,\x) circle (2pt);

\filldraw[black] (8.13,-0.5) circle (2pt);
\filldraw[black] (15.87,-.5) circle (2pt);
\filldraw[black] (12,\x + 1) circle (2pt);

\filldraw[black] (7.08,-1) circle (2pt);
\filldraw[black] (16.92,-1) circle (2pt);
\filldraw[black] (12,\x + 2) circle (2pt);


\filldraw[color = black, fill = red!9, thick] (9+\y,0) -- (15+\y,0) -- (12+\y,.5) -- cycle;
\filldraw[color = black, fill = red!9, thick] (9+\y,0) --  (12+\y,5.2) -- (10.93+\y,2.35) -- cycle;
\filldraw[color = black, fill = red!9, thick] (13.07+\y,2.35) --  (12+\y,\x) -- (15+\y,0) -- cycle;
\filldraw[color = black, fill = red!9, thick] (10.93+\y,2.35) --  (13.07+\y,2.35) -- (12+\y,.5) -- cycle;

\filldraw[black] (9+\y,0) circle (2pt);
\filldraw[black] (15+\y,0) circle (2pt);
\filldraw[black] (12+\y,\x) circle (2pt);
\filldraw[black] (10.93+\y,2.35) circle (2pt);
\filldraw[black] (13.07+\y,2.35) circle (2pt);
\filldraw[black] (12+\y,.5) circle (2pt);

\filldraw[color = black, fill = red!9, thick] (8.13+\y,-0.5) -- (9+\y,0) -- (15+\y,0) -- cycle;

\filldraw[color = black, fill = red!9, thick] (8.13+\y,-0.5) -- (15.87+\y,-.5) -- (15+\y,0) -- cycle;

\filldraw[color = black, fill = red!9, thick] (12+\y,\x) -- (15.87+\y,-.5) -- (15+\y,0) -- cycle;
\filldraw[color = black, fill = red!9, thick] (12+\y,\x) -- (15.87+\y,-.5) -- (12+\y,\x+1) -- cycle;

\filldraw[color = black, fill = red!9, thick] (12+\y,\x) -- (9+\y,0) -- (12+\y,\x+1) -- cycle;
\filldraw[color = black, fill = red!9, thick] (8.13+\y,-.5) -- (9+\y,0) -- (12+\y,\x+1) -- cycle;

\filldraw[color = black, fill = red!9, thick] (7.08+\y,-1) -- (8.13+\y,-0.5) -- (15.87+\y,-0.5) -- cycle;

\filldraw[color = black, fill = red!9, thick] (7.08+\y,-1) -- (16.92+\y,-1) -- (15.87+\y,-0.5) -- cycle;

\filldraw[color = black, fill = red!9, thick] (12+\y,\x+1) -- (16.92+\y,-1) -- (15.87+\y,-0.5) -- cycle;
\filldraw[color = black, fill = red!9, thick] (12+\y,\x+1) -- (16.92+\y,-1) -- (12+\y,\x+2) -- cycle;

\filldraw[color = black, fill = red!9, thick] (12+\y,\x+1) -- (8.13+\y,-0.5) -- (12+\y,\x+2) -- cycle;
\filldraw[color = black, fill = red!9, thick] (7.08+\y,-1) -- (8.13+\y,-0.5) -- (12+\y,\x+2) -- cycle;

\filldraw[black] (9+\y,0) circle (2pt);
\filldraw[black] (15+\y,0) circle (2pt);
\filldraw[black] (12+\y,\x) circle (2pt);

\filldraw[black] (8.13+\y,-0.5) circle (2pt);
\filldraw[black] (15.87+\y,-.5) circle (2pt);
\filldraw[black] (12+\y,\x + 1) circle (2pt);

\filldraw[black] (7.08+\y,-1) circle (2pt);
\filldraw[black] (16.92+\y,-1) circle (2pt);
\filldraw[black] (12+\y,\x + 2) circle (2pt);
\end{tikzpicture}

%% file: tikz/triangulation_L_u.tex
\begin{tikzpicture}[scale=.7]
\def\x{5.2}

\filldraw[draw=none, fill = orange!15, thick]
  (-0.87,-0.5) -- (3,\x+1) -- (6.87,-0.5) -- (-0.87,-0.5) -- (0,0) --
  (6,0) -- (3,\x) -- (0,0) --
  cycle;

\draw[thick]   (-0.87,-0.5) -- (3,\x+1) -- (6.87,-0.5) -- (-0.87,-0.5) -- cycle;


\filldraw[color = black, fill = red!9, thick] (0,0) -- (6,0) -- (3,.5) -- cycle;
\filldraw[color = black, fill = red!9, thick] (0,0) --  (3,5.2) -- (1.93,2.35) -- cycle;
\filldraw[color = black, fill = red!9, thick] (4.07,2.35) --  (3,\x) -- (6,0) -- cycle;
\filldraw[color = black, fill = red!9, thick] (1.93,2.35) --  (4.07,2.35) -- (3,.5) -- cycle;

\filldraw[black] (0,0) circle (2pt);
\filldraw[black] (6,0) circle (2pt);
\filldraw[black] (3,\x) circle (2pt);
\filldraw[black] (1.93,2.35) circle (2pt);
\filldraw[black] (4.07,2.35) circle (2pt);
\filldraw[black] (3,.5) circle (2pt);






\filldraw[draw=none, fill = orange!15, thick]
  (9.13,-0.5) -- (13,\x+1) -- (16.87,-0.5) -- (9.13,-0.5) -- (10,0) --
  (16,0) -- (13,\x) -- (10,0) --
  cycle;

\filldraw[draw=none, fill = orange!15, thick]
  (10,0) -- (11.93,2.35) -- (13,.5) -- (10,0) -- (10.44,0.25) -- (12.57,0.65) -- (11.9,1.9) -- (10.44,0.25) --
  cycle;

\filldraw[draw=none, fill = orange!15, thick]
  (16,0) -- (14.07,2.35) -- (13,.5) -- (16,0) -- (15.56,0.25) -- (13.43,0.65) -- (14.1,1.9) -- (15.56,0.25) --
  cycle;

\filldraw[draw=none, fill = orange!15, thick]
  (11.93,2.35) -- (13,\x) -- (14.07,2.35) -- (11.93,2.35) -- (12.28,2.6) -- (13.72,2.6) -- (13,\x-.5) -- (12.28,2.6) --
  cycle;

\draw[thick] (9.13,-0.5) -- (13,\x+1) -- (16.87,-0.5) -- (9.13,-0.5) -- cycle;

\filldraw[color = black, fill = red!9, thick] (10,0) -- (16,0) -- (13,.5) -- cycle;
\filldraw[color = black, fill = red!9, thick] (10,0) -- (13,5.2) -- (11.93,2.35) -- cycle;
\filldraw[color = black, fill = red!9, thick] (14.07,2.35) -- (13,\x) -- (16,0) -- cycle;
\filldraw[color = black, fill = red!9, thick] (11.93,2.35) -- (14.07,2.35) -- (13,.5) -- cycle;

\filldraw[color = black, fill = red!9, thick] (13,2.8) -- (12.7,3.4) -- (13.3,3.4) -- cycle;
\filldraw[color = black, fill = red!9, thick] (13,2.8) -- (12.28,2.6) -- (13.72,2.6) -- cycle;
\filldraw[color = black, fill = red!9, thick] (13,\x-.5) -- (12.7,3.4) -- (12.28,2.6) -- cycle;
\filldraw[color = black, fill = red!9, thick] (13,\x-.5) -- (13.3,3.4) -- (13.72,2.6) -- cycle;

\filldraw[color = black, fill = red!9, thick] (11.6,.7) -- (12.0,1.15) -- (11.4,1.05) -- cycle;
\filldraw[color = black, fill = red!9, thick] (11.9,1.9) -- (12.0,1.15) -- (12.57,0.65) -- cycle;
\filldraw[color = black, fill = red!9, thick] (11.9,1.9) -- (10.44,.25) -- (11.4,1.05) -- cycle;
\filldraw[color = black, fill = red!9, thick] (12.57,0.65) -- (10.44,.25) -- (11.6,.7) -- cycle;

\filldraw[color = black, fill = red!9, thick] (14.4,.7) -- (14.0,1.15) -- (14.6,1.05) -- cycle;
\filldraw[color = black, fill = red!9, thick] (14.1,1.9) -- (14.0,1.15) -- (13.43,0.65) -- cycle;
\filldraw[color = black, fill = red!9, thick] (14.1,1.9) -- (15.56,.25) -- (14.6,1.05) -- cycle;
\filldraw[color = black, fill = red!9, thick] (13.43,0.65) -- (15.56,.25) -- (14.4,.7) -- cycle;

\filldraw[black] (10,0) circle (2pt);
\filldraw[black] (16,0) circle (2pt);
\filldraw[black] (13,\x) circle (2pt);

\filldraw[black] (11.93,2.35) circle (2pt);
\filldraw[black] (14.07,2.35) circle (2pt);
\filldraw[black] (13,.5) circle (2pt);

\filldraw[black] (13,2.8) circle (2pt);
\filldraw[black] (12.7,3.4) circle (2pt);
\filldraw[black] (13.3,3.4) circle (2pt);

\filldraw[black] (12.0,1.15) circle (2pt);
\filldraw[black] (11.6,.7) circle (2pt);
\filldraw[black] (11.4,1.05) circle (2pt);

\filldraw[black] (14.0,1.15) circle (2pt);
\filldraw[black] (14.4,.7) circle (2pt);
\filldraw[black] (14.6,1.05) circle (2pt);

\filldraw[black] (10.44,0.25) circle (2pt);
\filldraw[black] (11.9,1.9) circle (2pt);
\filldraw[black] (12.57,0.65) circle (2pt);

\filldraw[black] (15.56,0.25) circle (2pt);
\filldraw[black] (14.1,1.9) circle (2pt);
\filldraw[black] (13.43,0.65) circle (2pt);

\filldraw[black] (13,\x-0.5) circle (2pt);
\filldraw[black] (12.28,2.6) circle (2pt);
\filldraw[black] (13.72,2.6) circle (2pt);

%

\end{tikzpicture}

%% file: tikz/triangulation_example.tex
\begin{tikzpicture}[scale=1.4]
\def\x{5.2}

\filldraw[draw=none, fill = orange!15, thick]
  (9.13,-0.5) -- (13,\x+1) -- (16.87,-0.5) -- (9.13,-0.5) -- (10,0) --
  (16,0) -- (13,\x) -- (10,0) --
  cycle;

\filldraw[draw=none, fill = orange!15, thick]
  (10,0) -- (11.93,2.35) -- (13,.5) -- (10,0) -- (10.44,0.25) -- (12.57,0.65) -- (11.9,1.9) -- (10.44,0.25) --
  cycle;

\filldraw[draw=none, fill = orange!15, thick]
  (16,0) -- (14.07,2.35) -- (13,.5) -- (16,0) -- (15.56,0.25) -- (13.43,0.65) -- (14.1,1.9) -- (15.56,0.25) --
  cycle;

\filldraw[draw=none, fill = orange!15, thick]
  (11.93,2.35) -- (13,\x) -- (14.07,2.35) -- (11.93,2.35) -- (12.28,2.6) -- (13.72,2.6) -- (13,\x-.5) -- (12.28,2.6) --
  cycle;

\draw[thick] (9.13,-0.5) -- (13,\x+1) -- (16.87,-0.5) -- (9.13,-0.5) -- cycle;

\filldraw[color = black, fill = red!9, thick] (10,0) -- (16,0) -- (13,.5) -- cycle;
\filldraw[color = black, fill = red!9, thick] (10,0) -- (13,5.2) -- (11.93,2.35) -- cycle;
\filldraw[color = black, fill = red!9, thick] (14.07,2.35) -- (13,\x) -- (16,0) -- cycle;
\filldraw[color = black, fill = red!9, thick] (11.93,2.35) -- (14.07,2.35) -- (13,.5) -- cycle;

\filldraw[draw=none, fill = orange!15, thick]
  (12.7,3.4) -- (13.3,3.4) -- (13,\x-.5) -- (12.7,3.4) -- (12.84,3.5) -- (13, \x-.76) -- (13.16,3.5) -- (12.84,3.5) --
  cycle;

\draw[] (13, \x-.76) -- (13.16,3.5) -- (12.84,3.5) --
  cycle;


\filldraw[draw=none, fill = orange!15, thick]
 (13,2.8) -- (12.7,3.4) -- (12.28,2.6) -- (13,2.8) -- (12.86,2.86) -- (12.43,2.73) -- (12.69,3.22) -- (12.86,2.86) --
  cycle;

\draw[] (12.86,2.86) -- (12.69,3.22) -- (12.43,2.73) --
  cycle;

\filldraw[draw=none, fill = orange!15, thick]
  (13,2.8) -- (13.3,3.4) -- (13.72,2.6) -- (13,2.8) -- (13.14,2.86) -- (13.57,2.73) -- (13.31,3.22) -- (13.14,2.86) --
  cycle;

\draw[] (13.14,2.86) -- (13.31,3.22) -- (13.57,2.73) --
  cycle;

\filldraw[color = black, fill = red!9, thick] (13,2.8) -- (12.7,3.4) -- (13.3,3.4) -- cycle;
\filldraw[color = black, fill = red!9, thick] (13,2.8) -- (12.28,2.6) -- (13.72,2.6) -- cycle;
\filldraw[color = black, fill = red!9, thick] (13,\x-.5) -- (12.7,3.4) -- (12.28,2.6) -- cycle;
\filldraw[color = black, fill = red!9, thick] (13,\x-.5) -- (13.3,3.4) -- (13.72,2.6) -- cycle;

\filldraw[color = black, fill = red!9, thick] (11.9,1.9)  --  (10.44,.25) -- (12.57,0.65) -- cycle;
\filldraw[color = black, fill = red!9, thick] (11.6,.7) -- (12.0,1.15) -- (11.4,1.05) -- cycle;
\filldraw[color = black, fill = red!9, thick] (11.9,1.9) -- (12.0,1.15) -- (12.57,0.65) -- cycle;
\filldraw[color = black, fill = red!9, thick] (11.9,1.9) -- (10.44,.25) -- (11.4,1.05) -- cycle;
\filldraw[color = black, fill = red!9, thick] (12.57,0.65) -- (10.44,.25) -- (11.6,.7) -- cycle;

\filldraw[draw=none, fill = orange!15, thick]
  (14.6,1.05) -- (14.0,1.15) -- (14.1,1.9) -- (14.6,1.05) -- (14.43,1.17) -- (14.16,1.65) -- (14.12,1.23) -- (14.43,1.17) --
  cycle;

\draw[] (14.16,1.65) -- (14.12,1.23) -- (14.43,1.17) --
  cycle;

\filldraw[color = black, fill = red!9, thick] (14.4,.7) -- (14.0,1.15) -- (14.6,1.05) -- cycle;
\filldraw[color = black, fill = red!9, thick] (14.1,1.9) -- (14.0,1.15) -- (13.43,0.65) -- cycle;
\filldraw[color = black, fill = red!9, thick] (14.1,1.9) -- (15.56,.25) -- (14.6,1.05) -- cycle;
\filldraw[color = black, fill = red!9, thick] (13.43,0.65) -- (15.56,.25) -- (14.4,.7) -- cycle;
\filldraw[color = black, fill = red!9, thick] (13.43,0.65) -- (14.0,1.15) -- (14.4,.7) -- cycle;

\filldraw[color = black, fill = red!9, thick] (14.6,1.05) -- (15.56,.25) -- (14.4,.7) -- cycle;


\filldraw[black] (10,0) circle (2pt);
\filldraw[black] (16,0) circle (2pt);
\filldraw[black] (13,\x) circle (2pt);

\filldraw[black] (11.93,2.35) circle (2pt);
\filldraw[black] (14.07,2.35) circle (2pt);
\filldraw[black] (13,.5) circle (2pt);

\filldraw[black] (13,2.8) circle (2pt);
\filldraw[black] (12.7,3.4) circle (2pt);
\filldraw[black] (13.3,3.4) circle (2pt);

\filldraw[black] (12.0,1.15) circle (2pt);
\filldraw[black] (11.6,.7) circle (2pt);
\filldraw[black] (11.4,1.05) circle (2pt);

\filldraw[black] (14.0,1.15) circle (2pt);
\filldraw[black] (14.4,.7) circle (2pt);
\filldraw[black] (14.6,1.05) circle (2pt);

\filldraw[black] (10.44,0.25) circle (2pt);
\filldraw[black] (11.9,1.9) circle (2pt);
\filldraw[black] (12.57,0.65) circle (2pt);

\filldraw[black] (15.56,0.25) circle (2pt);
\filldraw[black] (14.1,1.9) circle (2pt);
\filldraw[black] (13.43,0.65) circle (2pt);

\filldraw[black] (13,\x-0.5) circle (2pt);
\filldraw[black] (12.28,2.6) circle (2pt);
\filldraw[black] (13.72,2.6) circle (2pt);

\node[] at (14.24,1.33) {\footnotesize{$S_4$}};
\node[] at (13,3.75) {\footnotesize{$S_1$}};
\node[] at (12.68,2.95) {\footnotesize{$S_2$}};
\node[] at (13.32,2.95) {\footnotesize{$S_3$}};
\node[] at (10.7,2.6) {{$u$}};

\filldraw[black] (14.16,1.65) circle (1pt);
\filldraw[black] (14.12,1.23) circle (1pt);
\filldraw[black] (14.43,1.17) circle (1pt);

\filldraw[black] (13, \x-.76) circle (1pt);
\filldraw[black] (13.16,3.5)circle (1pt);
\filldraw[black] (12.84,3.5) circle (1pt);
\filldraw[black] (12.86,2.86) circle (1pt);
\filldraw[black] (12.69,3.22)circle (1pt);
\filldraw[black] (12.43,2.73) circle (1pt);
\filldraw[black] (13.14,2.86) circle (1pt);
\filldraw[black] (13.31,3.22) circle (1pt);
\filldraw[black] (13.57,2.73) circle (1pt);

\end{tikzpicture}